\documentclass[12pt]{amsart}
\usepackage[english]{babel}
\usepackage{amsmath}
\usepackage{amsfonts}
\usepackage{amssymb}
\usepackage{amsopn}
\usepackage{amsthm}
\numberwithin{equation}{section}
\usepackage{hyperref}
\usepackage{mathtools} 
\mathtoolsset{showonlyrefs=true,showmanualtags=true} 
\usepackage{enumerate}
\usepackage{mathrsfs}
\usepackage{amsmath,amssymb}
\usepackage{dsfont}
\usepackage{bbold}
\usepackage{extarrows}
\usepackage{soul}

\usepackage{wasysym}
\usepackage{stmaryrd}
\usepackage{todonotes}
\usepackage{comment}
\usepackage{algpseudocode}
\usepackage[shortlabels]{enumitem}
\usepackage[symbol]{footmisc}
\usepackage{xspace}
\usepackage[arrow,matrix,curve]{xy} 
\usetikzlibrary{matrix,decorations.pathreplacing,calc}
\usepackage{listings}
\usepackage{bm}
\usepackage{faktor}

\renewcommand{\hat}{\widehat}
\renewcommand{\tilde}{\widetilde}

\newtheorem{environment}{Environment}[section]

\newtheorem{remark}[environment]{Remark}

\newtheorem{lemma}[environment]{Lemma}

\newtheorem{corollary}[environment]{Corollary}
\newtheorem{theorem}[environment]{Theorem}
\newtheorem{proposition}[environment]{Proposition}
\newtheorem{example}[environment]{Example}
\newtheorem{conjecture}[environment]{Conjecture}

\newtheorem{problem}[environment]{Problem}

\newtheorem*{notation*}{Notation}
\newtheorem*{lemma*}{Lemma}
\newtheorem*{remark*}{Remark}
\newtheorem*{corollary*}{Corollary}
\newtheorem*{theorem*}{Theorem}
\newtheorem*{proposition*}{Proposition}
\newtheorem*{claim*}{Claim}
\newtheorem*{exercise*}{Exercise}



\makeatletter
\renewcommand*\env@matrix[1][*\c@MaxMatrixCols c]{%
  \hskip -\arraycolsep
  \let\@ifnextchar\new@ifnextchar
  \array{#1}}
\makeatother

\newcommand{\Vector}[1]{\ensuremath{\boldsymbol{#1}}}

\newcommand{\C}{\mathbb{C}}
\newcommand{\N}{\mathbb{N}}

\newcommand{\kk}{\mathbb{K}}
\newcommand{\R}{\mathbb{R}}

\renewcommand{\P}{\mathbb{P}}

\newcommand{\f}{\mathbf{f}}

\renewcommand{\l}{{\lambda}}
\newcommand{\x}{\mathbf{x}}

\newcommand{\lb}{{\boldsymbol{\lambda}}}

\newcommand{\gb}{{\boldsymbol{g}}}

\newcommand{\Psk}{{\Psi_k}}
\newcommand{\Pskm}{{\Psi_{k-1}}}
\newcommand{\set}[1]{\{ {#1} \}}
\newcommand{\AUX}{\AP{\cl F}}

\newcommand{\EDD}{\mathrm{\normalfont{EDD}}}

\def\disc{\Xi}
\def\sing{\mathrm{sing}}
\newcommand{\scp}[1]{\langle{#1}\rangle}
\newcommand{\norm}[1]{\Vert{#1}\Vert}
\newcommand{\cl}[1]{\mathcal{#1}}
\newcommand{\Hdn}{\H_{d}}
\newcommand{\Hun}{\H_{1}}
\newcommand{\Sn}{{\cl S}}

\newcommand{\nap}{\nabla_{X_0}^{\perp}}
\newcommand{\napp}{\nabla_{X_0}^{\perp,2}}
\newcommand{\Ver}{\mathcal{V}_{d}}
\newcommand{\Tan}{\mathcal{T}_{d}}
\newcommand{\Disc}{\disc_{d}}
\newcommand{\ann}{\mathrm{Ann}}
\newcommand{\rank}{\mathrm{rank}}

\def\H{\cl H}
\def\Ok{{\cl O}^{k}}
\def\Odn{{\cl O}_{d}}
\def\Okdn{\Ok_{d}}
\newcommand{\floor}[1]{\left\lfloor #1 \right\rfloor}

\newcommand{\dist}{\mathrm{dist}}
\newcommand{\vspan}[1]{\langle #1 \rangle}
\newcommand{\BM}[1]{\textcolor{black}{#1}}
\newcommand{\AP}[1]{\textcolor{black}{#1}}
\newcommand{\KK}[1]{\textcolor{black}{#1}}
\newcommand{\smooth}{\mathrm{smooth}}

\newcommand{\chin}[2]{\chi^{#1}_ {#2}}

\makeatletter
\DeclareFontEncoding{LS1}{}{}
\DeclareFontSubstitution{LS1}{stix}{m}{n}
\DeclareMathAlphabet{\mathcal}{LS1}{stixscr}{m}{n}
\makeatother

\DeclareMathOperator{\Sym}{\mathcal{S\mkern-4mu y\mkern-2.5mu m\mkern-1.5mu}}

\def\llcorner{\vartriangleright}
\newcommand{\zariski}[1]{\overline{#1}^{\mathrm{Zariski}}}

\author{Khazhgali Kozhasov}

\author{Bernard Mourrain}

\author{Adam Parusi\'nski}

\address{Khazhgali Kozhasov,
Universit\'e C\^ote d’Azur, CNRS, Labo. J.-A. Dieudonn\'e, UMR CNRS 7351, Parc Valrose, 06108 Nice Cedex 02, France}
\email{khazhgali.kozhasov@univ-cotedazur.fr}

\address{Bernard Mourrain, Inria at Universit\'e C\^ote d'Azur}
\email{bernard.mourrain@inria.fr}

\address{Adam Parusi\'nski,
Universit\'e C\^ote d’Azur, CNRS, Labo. J.-A. Dieudonn\'e, UMR CNRS 7351, Parc Valrose, 06108 Nice Cedex 02, France}
\email{adam.parusinski@univ-cotedazur.fr}

\keywords{Discriminant, caustic, Maxwell stratum, Veronese variety, tangential variety, secant variety, Euclidean Distance Degree, Bombieri-Weyl product}

\title[On the distance to discriminants and osculating varieties]{On the distance to discriminants \\and osculating varieties}

\begin{document}

\maketitle

\begin{changemargin}{1.5cm}{1.5cm}
{\bf\noindent Abstract.}
We study the problem of finding the closest singular hypersurface to a given \AP{nonsingular} one from algebro-geometric point of view. The set of singular hypersurfaces $\disc$ of a given degree is a projective variety dual to the Veronese variety. However, as was shown by Raffalli \cite{Raffalli2014}, the Bombieri-Weyl (also known as apolar) distance from a general real hypersurface to $\disc$ \BM{could be minimized at a singular} point of $\disc$. We study the singular locus of $\disc$ \BM{and its irreducible component,}   consisting of hypersurfaces with cuspidal singularities. This variety is projectively dual to the tangential variety of the Veronese variety. We compute its Euclidean Distance Degree using topological tools and deduce a formula for the number of critical points of the distance function to $\disc$ from a general 
\BM{hypersurface}.
We also initiate the study of the analogous problem for higher osculating varieties to the Veronese variety. Furthermore, we establish a new extremal property of Chebyshev polynomials (conjectured in \cite{Raffalli2014}) by characterizing real-rooted binary forms that maximize the distance to $\disc$.
\end{changemargin}
\vspace{1cm}

\section{Introduction}

In many situations in scientific computing, analysing the closest singular configurations helps to develop robust computations with noisy data. The inverse of the distance to these singular configurations, is closely related to the conditioning of the problem, and often controls the numerical quality of the computed solutions of the problem \cite{Demmel1987}.

A typical example is a low-rank approximation of matrices, where a solution is obtained via truncation of the singular value decomposition \cite{EY1936}.
It turns out that singular values of a matrix correspond to critical values of the distance from it to the variety of singular matrices and pairs of singular vector give rise to the associated critical points.

Similarly, in the analysis of real algebraic varieties, the numerical quality of their approximation or some of their numerical  topological invariants are controlled by the inverse distance to the discriminant variety, which coincides, up to a scaling factor, with the so-called condition number.
Complexity bounds and the practical behaviour of many algorithms depend directly on this quantity (see e.g. \cite{Cucker2008,Brgisser2021}).

In Tensor Analysis, the distance from a symmetric tensor $F$ to the Veronese variety of rank-one tensors is given by the maximal absolute value on the unit sphere of the homogeneous form identified with $F$. More generally, critical points of the distance from $F$ to the Veronese variety (known as critical rank-one approximations of $F$) are symmetric rank-one tensors constructed from critical points of $F$ on the unit sphere (also known as eigenvectors of $F$ \cite{LekHengLim,QiLuo}) and their corresponding critical values.

Our aim is to study in detail the Euclidean distance function to discriminant varieties of forms with simple or higher-order singularities. From a dual point of view, this corresponds to considering the Euclidean distance to osculating varieties of the Veronese variety.
This problem fits into the following general framework \cite{DHOST} that lies at the intersection of real algebraic geometry and optimization. 
If $\cl X^{\R}$ is a (semi)algebraic variety of an Euclidean space $(\R^N,\scp{\cdot,\cdot})$ and $\bm u\in \R^N$ is a general data point, one seeks to optimize the (squared) distance $\textrm{dist}_{\bm u}: \cl X^{\R}\rightarrow \R$, $\bm x\mapsto \
\scp{\bm u - \bm x,\bm u - \bm x}$, from this point to the variety.
If $\textrm{dist}_{\bm u}$ is minimized at a smooth point of the Zariski closure $\cl X\subset \C^N$ of $\cl X^{\R}$, a minimizer is among the critical points of the restriction of $\textrm{dist}_{\bm u}$ to the smooth part $\smooth(\cl X)$ of the variety.
\BM{In general, }when the data point $\bm u$ is chosen generically, the distance function has only non-degenerate critical points on $\smooth(\cl X)$ and finitely many of them.
\BM{The number of \emph{complex} non-degenerate critical points for a generic data point $\bm u$ is known as} \emph{the Euclidean Distance Degree} $\EDD(\cl X)$ of the variety \cite{DHOST}.
However, when $\cl X$ has singularities it can happen that $\textrm{dist}_{\bm u}$ is minimized at a singular point of $\cl X$ for data points $\bm u$ \BM{in a ball of $\R^N$}, see Fig. \ref{fig:binary cubics}.
This observation motivates one to study the restriction of the distance function $\dist_{\bm u}$ to the singular locus $\sing(\cl X)$ of $\cl X$ or, more specifically, to particular irreducible components of $\sing(\cl X)$.
In particular, it is then natural to count critical points of $\dist_{\bm u}$ to the smooth part of $\sing(\cl X)$.
If, for a generic $\bm u$, the distance to $\cl X$ is minimized either at a smooth point of $\cl X$ or at a smooth point of $\sing(\cl X)$, the sum $\EDD(\cl X)+\EDD(\sing(\cl X))$ captures in a better way the algebraic complexity of the distance minimization problem to $\cl X$.

In the present work we investigate this phenomenon for the discriminant variety $\Disc$ that consists of \BM{degree-$d$ forms $\in \cl H_d$, which are singular}. We also consider subvarieties of $\Disc$ corresponding to singularities of prescribed types. The ambient space $\Hdn$ of degree-$d$ forms is endowed with the Bombieri-Weyl (also known, as apolar) product \eqref{eq:Bomb}. 

{\bf Previous works.} The case $d=2$ is well-understood.
For any quadratic form (or symmetric matrix) $F\in \H_2$ in $n$ variables, and the discriminant variety $\cl X =\disc_2$ of singular quadratic forms, the distance from $F$ to $\disc_2$ is 
$|\l_0|=\min_{i=0,\dots,n-1}\vert\lambda_i\vert$
where $\l_i$ are the eigenvalues of $F$ ordered by their absolute values. 
The Euclidean distance from $F$ to $\Xi_2$ is then minimized at the
 singular symmetric matrix $F -\l_0 X_0^2 $ where $X_0$ is the linear form given by a unit eigenvector of $F$ associated to $\l_0$.
 In particular, for any non-singular $F$, the distance to the discriminant attains its minimum at a smooth point of $\Xi_2$.
Moreover, for a generic $F\in \cl H_2$, any other critical point of the distance function from $F$ to $\Xi_2^{\smooth}$ is obtained by discarding from $F$ a rank-one quadratic form  $\l_i X_i^2$ associated to an eigenpair of $F$ \BM{(see  e.g. \cite[Thm. 2.2]{HS1995})}. Hence $\EDD(\Xi_2)=n$.
The singular locus of $\sing(\Xi_2)$ is the variety of symmetric matrices of rank $\le n-2$.
More generally, the distance function from a generic $F\in \cl H_2$ to the smooth locus of $\sing(\Xi_2)$ has $\EDD(\sing(\Xi_2))={n\choose 2}$ critical points obtained by discarding a pair of rank-one summands $\l_i X_i^2+ \l_j X_j^2$ from  $F$, (see e.g. \cite[Thm. 2.2]{HS1995}).

In the case $n=2$, the discriminant $\disc_{d}$ consists of binary forms having (at least) double roots. 
More generally, one can consider the variety of binary forms in $\H_d$ having (at least) a $k$-fold root ($k\leq d$).
Lee and Sturmfels \cite{LEE2016499} demonstrated that the Euclidean Distance Degree of each of these varieties equals $d$.
These are particular cases of varieties in $\Hdn$ that are defined by partitions  $\lambda \vdash d$ encoding multiplicities of roots.
The ED degree of these varieties is not known except for $\lambda=(k,1,\dots,1)\vdash d$, $2\leq k\leq d$, some partitions $\lambda \vdash d$ with $d\leq 7$ (see \cite[Table 1]{LEE2016499}) and $\lambda=(2,\dots,2)\vdash d$ for $d=2,4,6,8, 10$ \cite{SefatPanah2026}.

In the case of general $d$ and $n$, the Euclidean Distance Degree \BM{EDD$(\Disc)=\frac{(d-1)^n-1}{d-2}$} was computed in \cite{FS, CS2013}, \BM{in view of the invariance of the EDD under duality} (see \cite[Thm. 5.2]{DHOST}). 

The \BM{distance to the} real discriminant $\Disc^{\R}$ \BM{has also been studied in the past}. \BM{This semi-algebraic set} consists of  real forms with real singular points (cf. Section \ref{sub:disc}). 
The Euclidean distance function to $\Disc^{\R}$ is considered by Cucker et al. in \cite{Cucker2008}, where it is used to control the complexity of algorithms for counting zeros of polynomial systems.
It has been further studied by Raffalli in \cite{Raffalli2014}, where he explored how the topology of a real projective hypersurface affects whether its Bombieri-Weyl distance to $\Disc^\R$ attains minimum at a smooth point of the discriminant.

To \BM{analyse} the critical points of the distance to the (real) discriminant, \BM{we need to take into account its singular locus $\sing(\Xi_d)$.}
\BM{One of its components is known as}
the \emph{caustic component} (see e.g. \cite[p. 21]{Arnold1993}). It consists of forms having a degenerate singular point.
The other irreducible component of $\sing(\Xi_d)$,  known as \emph{the Maxwell stratum} \cite[p. 21]{Arnold1993}, consists of forms having at least two distinct singular points.
\BM{It is shown} in \cite[Thm. 4.5]{Raffalli2014}, that a minimizer of the distance to the real discriminant is never in the Maxwell stratum.
Therefore, from the perspective of distance minimization to $\Disc^{\R}$, only its smooth locus and the caustic component of $\sing(\Disc)$ matter. 

The \BM{dual to the caustic component} is the  tangential variety $\Tan$ of the Veronese variety $\Ver$, that is its first \emph{osculating variety}. 
The $k^{\mathrm{th}}$ osculating variety $\Okdn$ to $\Ver$ is defined as the locus of forms in $\Hdn$ that are divisible by $(d-k)^{\mathrm{th}}$ power of a linear form.
Higher-order osculating varieties are studied in the context of defectivity of secants by Piene \cite{Piene2022}. 
They are also relevant for the analysis of polar classes  of a projective variety $\cl X$ done by Bernardi et al. \cite{Bernardi_Catalisano_Gimigliano_Idà_2007} and their  degrees sum up to the ED degree of $\cl X$ for a sufficiently general scalar product \cite[Thm. 5.4]{DHOST}. 
In \cite{Lerario2021} Lerario and Stecconi considered the distance from $k$-jets of a vector-valued function $f$ to (local) discriminant varieties. They use it to define the condition number of a singularity and to bound invariants such as Betti numbers of the locus of $k$-jets of $f$ with prescribed values or degrees of its approximating polynomial equations.
As explained in \cite[Sec. 2.3]{Lerario2021}, this distance function can be seen as a generalization of the local version of the distance to the $k$-th osculating variety that we investigate in the present work.
Finally, higher-order osculating condition appears in the study of higher-order distance degree and higher-order distance loci recently introduced by Di Rocco et al. \cite{DiRoccoRoseSodomaco2026}.  

The osculating varieties $\Okdn$ play a central role in the generalized additive decomposition of (symmetric) tensors, providing a compact representation of a form $f\in \Hdn$ as a point on their joint variety (see e.g. \cite{Iarrobino1999,BarrilliMourrainTaufer2025}). 
\BM{They were also considered in \cite{Bernardi2007}, where the authors studied dimensions and defectivity of the secant varieties of $\mathcal{O}^k_{d}$.}

The dual of the osculating variety $\Okdn$ is naturally connected to the higher-order discriminant $\Disc^k$.
It is defined as the variety of forms in $\Hdn$ that vanish at some point in the projective space together with all the derivatives of order up to $k$.
Higher discriminants and their multiplicities are studied, e.g. in \cite{AluffiCukierman1993}, generalizing results from \cite{Parusiski1991}.


{\bf Contributions.}
We study the Bombieri-Weyl distance from a homogeneous polynomial $F\in \H_d$ of degree $d\geq 3$ in $n$ variables to the discriminant hypersurface $\Disc$ from algebro-geometric point of view.
It is well known that $\Disc$ is projectively dual to the Veronese variety $\Ver$ consisting of $d$-th powers of linear forms.  
\BM{We prove that} the singular locus of $\Disc$ splits into two irreducible components, $\sing(\Disc)=\cl C_{d}\cup \cl M_d$, both of codimension $2$ in $\Hdn$ (see Proposition \ref{prop:sing_discr}). 
The \emph{caustic component} $\cl C_d$ consists of forms in $\Hdn$ having (at least) $A_2$-singularities. \BM{We show that} its dual variety is the tangential variety $\Tan$ to the Veronese variety. 
The other irreducible component $\cl M_d$ of $\Disc$, known as \emph{the Maxwell stratum}, is obtained as the Zariski closure of forms in $\Hdn$ that are singular at two points in $\P^{n-1}_{\C}$. The projectively dual to $\cl M_d$ is the $2$-secant variety of $\Ver$.
Our main result is a closed formula for the Euclidean Distance Degree of the caustic component $\cl C_d$ of the discriminant.

\begin{theorem}\label{thm:main}
The Euclidean Distance Degree of the tangential variety $\Tan$ to the Veronese variety  or, equivalently, of its projectively dual variety $\cl C_{d}$ equals 
\begin{align}
\EDD(\Tan) = \EDD(\cl C_{d})  & = \frac {d} {2 (d-2)^2} \, \big( (2d-3)^{n} - 2 (d-1)^{n} + 1\big)
\end{align}
\end{theorem}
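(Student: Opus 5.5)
By duality invariance of the ED degree \cite[Thm. 5.2]{DHOST}, it suffices to compute $\EDD(\Tan)$. The Bombieri-Weyl product is invariant under the orthogonal group $O(n)$, but it is not generic relative to $\Tan$. So the formula $\EDD=\sum$ of polar degrees does not directly apply: the isotropic quadric $Q=\{\ell^2=0\}$ in $\P^{n-1}$ meets the parametrization in a special way. I will instead use the topological approach of Maxim–Rodriguez–Wang. For a general $F$, the ED degree of the affine cone over $\Tan$ equals, up to sign, the Euler characteristic of $\smooth(\Tan)\setminus\{\dist_F=0\}$, or of a suitable projectivized complement. Concretely, $\EDD(X)=(-1)^{\dim}\chi(\smooth(X)\setminus (Q_\infty\cup H))$, where $Q_\infty$ is the isotropic quadric of the Bombieri-Weyl form and $H$ is a general hyperplane.

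\textbf{Step 1: parametrize and reduce to a pair $(\ell,m)$.} The smooth part of the cone over $\Tan$ is, away from $\Ver$, parametrized birationally by $(\ell,m)\mapsto \ell^{d-1}m$, with $\ell\in\P^{n-1}$ and $m$ a linear form. Projectively, the open part of $\Tan$ is the total space of a $\P^{n-1}$-bundle over $\P^{n-1}$ (the class $[m]$), with the Veronese section $m\sim\ell$ removed. I will compute the Bombieri-Weyl norm: $\langle \ell^{d-1}m,\ell^{d-1}m\rangle$ is a combination of $q(\ell)^{d-1}q(m)$ and $q(\ell)^{d-2}b(\ell,m)^2$, where $q$ is the standard quadratic form and $b$ its bilinear form, with explicit coefficients $1/d$ and $(d-1)/d$. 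Thus $Q_\infty\cap\Tan$ pulls back to $\{q(\ell)=0\}\cup\{q(\ell)q(m)+(d-1)b(\ell,m)^2=0\}$. The general hyperplane pulls back to a general bidegree-$(d-1,1)$ divisor $\{G(\ell,m)=0\}$, linear in $m$.

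\textbf{Step 2: fibrewise Euler characteristic.} I fibre over $\ell$ and split $\P^{n-1}_\ell$ into the isotropic quadric $q(\ell)=0$ and its complement. For $q(\ell)\neq0$, the fibre in $m$ is an affine space $\C^{n-1}=\P^{n-1}\setminus\{G=0\}$, with the quadric $q(\ell)q(m)+(d-1)b^2$ and the point $m=\ell$ removed. This quadric is nondegenerate, and its Euler characteristic in a general affine chart is computable. The isotropic $\ell$ are excluded from $\smooth\setminus Q_\infty$ anyway. One subtlety is the locus $G(\ell,\cdot)\equiv0$; it is empty for general $G$ when $n\ge2$, since it has codimension $n$ in $\P^{n-1}$. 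Another is the fibres where the hyperplane $G(\ell,\cdot)$ becomes tangent to the quadric or passes through $\ell$. These form discriminant-type divisors in $\ell$-space, of degree controlled by $d-1$ and $d-2$. The fibre Euler characteristic is constant off them and jumps by $\pm1$ on them.

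\textbf{Step 3: assemble and sum.} I sum $\chi(\text{generic fibre})\cdot\chi(\P^{n-1}\setminus\text{bad loci})$ plus the corrections. This reduces to Euler characteristics of complements of hypersurfaces of degrees $2$, $2(d-1)$ and $2d-3$ in $\P^{n-1}$, which is the source of the terms $(2d-3)^n$ and $(d-1)^n$. Each such complement is a Milnor-fibre type computation of the form $\frac{(1-e)^n-1}{e}$-ish. I will organize the sum so that the geometric series produce $\frac{(2d-3)^n-1}{2d-4}$ and $\frac{(d-1)^n-1}{d-2}$. The stated formula equals $\frac{d}{(d-2)}\big[\frac{(2d-3)^n-1}{2(d-2)}-\frac{(d-1)^n-1}{d-2}\big]$, which guides the bookkeeping. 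Finally I check the cases $n=2$ (Lee–Sturmfels give $d$ for forms with a triple root... indeed the formula gives $d$) and $d=3$.

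\textbf{Main obstacle.} The hard part is Step 2–3: correctly identifying the special fibres and the non-genericity of the bad loci in $\ell$-space. Those loci are singular along $q(\ell)=0$, so their Euler characteristics are not those of smooth hypersurfaces. I would first try to handle this by stratifying further by $q(\ell)$, or by a $\C^*$-action/Milnor fibre argument, using the $O(n)$-symmetry to reduce to an explicit model.
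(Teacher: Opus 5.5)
Your set-up matches the paper's: the parametrization by $\P^{n-1}\times\P^{n-1}$, the pull-back of $\cl Q$ to $\{q(\ell)=0\}\cup\{q(\ell)q(m)+(d-1)b(\ell,m)^2=0\}$, the pull-back of $H$ to a bidegree-$(d-1,1)$ divisor, and fibring over $\ell$. The gap is the formula you feed this into. The identity $\EDD(X)=(-1)^{\dim X}\chi(\smooth(X)\setminus(\cl Q\cup H))$ is the Aluffi--Harris formula, and it is valid only for smooth $X$. Here $\P\Tan$ is singular along $\P\Ver$. For singular $X$ one needs the Maxim--Rodriguez--Wang version, which weights the Euler characteristic by the local Euler obstruction.

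Along $\Ver$, the normal slice of $\Tan$ is, to leading order, the image of $K\mapsto K^2$ with $K\in\C^{n-1}$. Its complex link is therefore an $A_1$ Milnor fibre in $n-1$ variables, so $Eu_{\Tan}=1+(-1)^n$ there. Write $\mathbf 1_{\smooth(\P\Tan)}=Eu_{\P\Tan}-(1+(-1)^n)\mathbf 1_{\P\Ver}$ and apply the weighted formula to both $\Tan$ and $\Ver$. This shows that the quantity you propose to compute is
\[
\chi\big(\smooth(\P\Tan)\setminus(\cl Q\cup H)\big)=\EDD(\Tan)+\big(1+(-1)^n\big)\EDD(\Ver).
\]
So for even $n$ you overcount by $2\EDD(\Ver)$. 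For odd $n$ your formula happens to be right, but only because the obstruction vanishes, and you would need that computation anyway to justify it. Your own $n=2$ check exposes the problem:
\begin{itemize}
\item Over $\P^1\setminus\{q=0\}$, whose Euler characteristic is $0$, the generic fibre is $\P^1$ minus four points.
\item There are $2d$ tangency fibres and $d$ fibres with $G(\ell,\ell)=0$, each of Euler characteristic $-1$.
\end{itemize}
The total is $3d$, not $d$.

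The fix, which is what the paper does, is to keep the diagonal. Since $\Psi$ is a homeomorphism, compute $\chi(\P\Tan\setminus(\cl Q\cup H))=n^2-\chi(\Psi^{-1}(\cl Q\cup H))$ on all of $\P^{n-1}\times\P^{n-1}$. Then add the singular-stratum term $(-1)^n\chi(\P\Ver\setminus(\cl Q\cup H))=-\EDD(\Ver)$.

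With the diagonal kept, the locus $G(\ell,\ell)=0$ no longer matters. The only special fibres lie over the degree-$2d$ hypersurface $\{\f^TB\f=0\}$, where $B(\ell)=q(\ell)I+\frac{1-d}{d}\ell\ell^T$ is proportional to $A(\ell)^{-1}$. Over it the fibre quadric section acquires a single $A_1$ point. This hypersurface is itself singular, exactly along the complete intersection $h_1=h_2=h_3=0$ of degrees $2$, $2d-2$, $d$, with transversal $A_1$ singularities. Its Euler characteristic therefore comes from the Parusi\'nski--Pragacz formula together with Hirzebruch's generating function.

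The degree $2d-2$ of $h_2=\sum f_i^2$ is what produces the $(2d-3)^n$ term; there is no hypersurface of degree $2d-3$. Your Step 3 is still only a plan, and for even $n$ no bookkeeping will reach the stated formula without the Euler-obstruction correction.
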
 

As we pointed out above, the Bombieri-Weyl distance from a given real form $F\in \Hdn$ to the real discriminant $\Disc^\R$ was studied by Raffalli \cite{Raffalli2014}.
It is given via the following optimization problem
\begin{align}\label{eq:dist_opt_intro}
\min_{G\in \Disc^\R} \Vert F-G\Vert^2\ =\ \min_{X\in \Sn^\R} \delta_F^1(X),\quad \delta_F^1(X)\ =\ (1-d)F(X^*)^2+\frac{1}{d}\Vert \nabla_X F\Vert^2,
\end{align}
where $\nabla_X F$ is the differential of $F$ computed at the point $X^*$ uniquely associated with $X\in \Hun$ and $F(X^*)$ is the value of $F$ at this point.
In the sequel we denote by $\Sn$ the affine hypersurface in $\Hun$ defined by $\Vert X\Vert^2=1$, its real locus is the unit $(n-1)$-sphere $\Sn^\R$ in the space of real $n$-variate linear forms.
The function $\delta^1_F$, whose minimum over $\Sn^\R$ gives the Bombieri-Weyl distance from $F$ to $\Disc^\R$, depends polynomially both on $X$ and $F\in \Hdn$.
In Proposition \ref{prop:distdisc} we generalize \eqref{eq:dist_opt_intro} and give a formula for the distance to higher-order (real) discriminants defined in \eqref{eq:k_discr}.

We show that for a generic $F\in \Hdn$, critical points of the squared distance function from $F$ on $\Disc^\R$ are in correspondence with pairs of antipodal critical points of $\delta_F^1$ (see \eqref{eq:delta1 value}) on $\Sn^\R$ and that these critical points are non-degenerate (see Proposition \ref{prop:unique_min}). Moreover, by Proposition \ref{prop:critical point}, for a generic $F\in \Hdn$, a \BM{pair of antipodal critical points $\set{X,-X}\subset \Sn^\R$} of \BM{$\delta_F^1$} corresponds to 
\begin{itemize} 
\item either a smooth point of $\Xi_d$ 
\item or a smooth point of $\cl C_d$.
\end{itemize}
To prove this, we use a novel parametrisation of the singular forms, based on homogeneous Taylor expansion,  and the properties of the Bombieri-Weyl product.
In view of this characterization of critical points of the distance to $\Disc^\R$, our Theorem \ref{thm:main} together with the formula \cite{FS, CS2013} for the ED degree of $\Disc$ provide a measure of the algebraic complexity of the Bombieri-Weyl distance minimization problem to the real discriminant.

\begin{theorem}\label{thm:bound}
For a generic $F\in \Hdn$ the number of complex critical points of $\delta_F^1$ on $\Sn$ is 
\begin{align}\label{eq:sum}
2(\EDD(\Disc) + \EDD(\cl C_{d}))\ =\ \frac{d(2d-3)^n - 4(d-1)^n - d + 4}{(d-2)^2}
\end{align}
In particular, this formula gives an upper bound on the number of real critical points of the optimization problem \eqref{eq:dist_opt_intro} for a generic real form $F\in \Hdn$.
\end{theorem}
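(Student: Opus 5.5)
The plan is to reduce Theorem~\ref{thm:bound} to three ingredients already announced: the correspondence between critical points of $\delta_F^1$ on $\Sn$ and critical points of the distance to $\Disc$ and to $\cl C_d$; the known value $\EDD(\Disc)=\frac{(d-1)^n-1}{d-2}$ from \cite{FS, CS2013}; and Theorem~\ref{thm:main}. The count then follows by adding the two contributions and doubling.

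\textbf{Step 1: the correspondence.} Fix a generic $F\in\Hdn$. By Proposition~\ref{prop:critical point}, which I will use in its complex form, every critical point $X\in\Sn$ of $\delta_F^1$ determines a form $G=F-(\text{correction built from }F(X^*),\nabla_XF)$ that is singular at $X^*$. This $G$ is either a smooth point of $\Disc$ or a smooth point of $\cl C_d$, and it is a critical point of $\dist_F$ on the corresponding smooth locus.

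Conversely, a critical point $G$ of $\dist_F$ on $\smooth(\Disc)$ has a unique singular point $[X^*]\in\P^{n-1}$. Since $\scp{X,X}$ is a nondegenerate quadratic form and $F$ is generic, this point lifts to exactly two points $\pm X\in\Sn$, never to an isotropic vector. Both lifts are critical for $\delta_F^1$, because $\delta_F^1$ is even in $X$.

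For $G\in\smooth(\cl C_d)$ the same holds: the cusp point is unique, so again we get exactly two lifts. Proposition~\ref{prop:unique_min} gives non-degeneracy. Hence no critical point of $\delta_F^1$ is counted with multiplicity, and the two families are disjoint, since $\smooth(\Disc)\cap\cl C_d=\emptyset$.

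\textbf{Step 2: genericity.} I must check that all critical points counted by $\EDD(\Disc)$ and $\EDD(\cl C_d)$ have singular point $X^*$ with $\scp{X,X}\neq0$, so that they really lift to $\Sn$. I expect this to be the main obstacle. The approach is an incidence-variety dimension count. The bad locus, where the singular point is isotropic, is a proper closed subvariety of $\Disc$ and of $\cl C_d$. For generic $F$, no ED-critical point lies there, since the ED correspondence has the expected dimension over each proper subvariety, as in \cite{DHOST}. Together with Step~1, this gives a two-to-one map from critical points of $\delta_F^1$ onto the union of ED-critical points of $\Disc$ and of $\cl C_d$. The number of complex critical points is therefore $2(\EDD(\Disc)+\EDD(\cl C_d))$.

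\textbf{Step 3: arithmetic.} With $a=(d-1)^n$ and $b=(2d-3)^n$,
\begin{align}
2\Big(\frac{a-1}{d-2}+\frac{d(b-2a+1)}{2(d-2)^2}\Big)=\frac{2(d-2)(a-1)+d(b-2a+1)}{(d-2)^2}=\frac{db-4a-d+4}{(d-2)^2},
\end{align}
which is \eqref{eq:sum}.

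The real upper bound follows at once. For a generic real $F$, the real critical points of \eqref{eq:dist_opt_intro} on $\Sn^\R$ are among the complex critical points of $\delta_F^1$ on $\Sn$. The genericity conditions used above are Zariski open and nonempty, so they hold on a dense set of real forms.
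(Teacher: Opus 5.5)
Your route matches the paper's. Proposition~\ref{prop:critical point} sends each critical point $X$ of $\delta^1_F$ to an ED-critical point $R^2_X(F)$ of $\smooth(\Disc)$ or $\smooth(\cl C_d)$, this assignment is $2:1$, and the two ED degrees together with your arithmetic (which is correct) give \eqref{eq:sum}.

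The step that fails as written is the converse in your Step~1. For an ED-critical $G$ with singular point $X^*$, $X\in\Sn$, you assert that $\pm X$ are critical because $\delta^1_F$ is even. Evenness only transports criticality from $X$ to $-X$. Nothing you wrote shows that $X$ itself is critical, or even that $G=R^2_X(F)$. This is exactly surjectivity of the correspondence, and without it you only get an upper bound. The paper's proof leaves surjectivity implicit: it checks only that $X\mapsto R^2_X(F)$ lands among the ED-critical points and is $2:1$, using $T^1_X(F)=X^{d-1}(F(X^*)X+\nabla^\perp_XF)$. So you are right to address it, but it needs an argument.

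The missing ingredient is the normal space at $G$.
\begin{itemize}
\item By \eqref{eq:eval} and \eqref{eq:eval3}, Proposition~\ref{prop:dual_ver} gives $T_G(\Disc)^\perp=\kk X^d$.
\item Proposition~\ref{prop:dual_ver_tan} shows that $T_G(\cl C_d)^\perp$ is spanned by $X^d$ and $X^{d-1}W_0$, with $W_0\in\ann_1(\nabla^2_XG)\setminus\kk X$. This holds at the general points of $\cl C_d$ treated there, which suffices by your Step~2 reasoning.
\item Hence ED-criticality means $F-G\in\cl O^1_{d,X}$. The orthogonal splitting $\Hdn=\cl O^1_{d,X}\oplus\disc_{d,X}$ for $X\in\Sn$ then gives $G=R^2_X(F)$, $T^1_X(F)=F-G$ and $\nabla^{\perp,2}_XF=\nabla^{\perp,2}_XG$.
\item In the $\Disc$ case $\nabla^\perp_XF=0$. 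In the $\cl C_d$ case $\nabla^\perp_XF$ is a multiple of $W_0-\scp{W_0,X}X$, which is killed by $\nabla^2_XG$ because $X$ and $W_0$ are.
\item Lemma~\ref{lem:delta1 crit} then shows that $X$ is critical.
\end{itemize}

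The same description makes your Step~2 immediate, with no incidence count. Suppose $X$ were isotropic. For $d\ge 3$ the gradient of $X^{d-1}L$ at $X^*$ is $(d-1)\scp{X,X}^{d-2}\scp{L,X}X+\scp{X,X}^{d-1}L$, which vanishes. So $\nabla_XF=\nabla_XG=0$ and $F\in\Disc$, which a generic $F$ avoids.

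Finally, Proposition~\ref{prop:unique_min} is about real critical points on $\Sn^\R$ and says nothing about complex ones. You do not need it, since you are counting the points of a set-theoretic $2:1$ correspondence.
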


To prove the main result, we pass to the dual side and compute, instead, the EDD of the tangential variety $\Tan$ of $\Ver$. For the computation of the latter, we follow an approach
based on complex Morse theory on singular varieties, instead of polar classes techniques that require transversality conditions, which are not fulfilled in our case. We exploit a formula due to Maxim et al. \cite{MaximRodriguezWang2021} that expresses the EDD of a projective variety $\cl X$ in terms of the Euler characteristic of the local Euler obstruction of $\cl X$ restricted to some Zariski open subset $\cl U\subset \cl X$ of the variety.
This computation being quite tedious is nevertheless feasible due to the fact that a natural Whitney stratification of $\Tan$ by $\Ver=\Tan^\sing$ and $\Tan\setminus\Ver$ consists of only two strata. Furthermore, the parametrization \eqref{eq:Psi} of $\Tan$ by the product of two projective spaces facilitates the computations.

In the case of a smooth projective variety $\cl X$, the topological formula \cite{MaximRodriguezWang2021} for the ED degree discovered earlier by Aluffi and Harris \cite{AluffiHarris2018} reduces to the computation of the Euler characteristic of $\cl U\subset \cl X$.  We apply this formula to recover $\EDD(\Disc)=\EDD(\Ver)=\frac{(d-1)^n-1}{d-2}$ obtained in \cite{FS, CS2013} (see Proposition \ref{prop:EDD_Sigma}). 

We partially extend this analysis to higher discriminant varieties. In Section \ref{sec:proj dual}, we describe the dual of higher-order osculating varieties. In Section \ref{sec:disc}, we give formulas generalizing \eqref{eq:dist_opt_intro} for the distance to the higher-order osculating and discriminant varieties, leaving many open questions on its critical points for future works.

In \cite{Raffalli2014} the very natural problem of maximizing the distance to $\Disc^\R$ from a real form with a fixed Bombieri-Weyl norm \BM{is studied}.
In particular, \BM{it is conjectured} that for $d-r\in 2\N$, $C_{r,d}=(X_1^2+X_2^2)^{\frac{d-r}{2}} \textrm{Re}(X_1+i X_2)^r$, $r>d/3$, maximizes the distance to $\Disc^\R$ among all binary real forms $F\in \Hdn$ with $\Vert F\Vert_B=\Vert C_{r,d}\Vert_B$ and having $2r$ roots on $\Sn^{\R}$.
In Theorem \ref{thm:extremal} we settle this conjecture in the case of real-rooted forms ($r=d$), see Section \ref{sec:extremal} for details.
In \cite{AKU} it is shown that $C_{d,d}$ maximizes the Bombieri-Weyl distance to the Veronese variety among all binary forms in $\Hdn$ having the same norm as $C_{d,d}$.
Because of its link with Chebyshev polynomial of the first kind ($C_{d,d}(t,\sqrt{1-t^2}) =\cos(d\arccos t)$, $t\in [0,1]$), $C_{d,d}$ is referred to in \cite{AKU} as \emph{Chebyshev form}.
Our result can then be considered as a contribution to the list of extremal properties of Chebyshev polynomials.
\medskip

In Section \ref{sec::prel} we define all necessary notions and prove auxiliary results.
In particular, we describe the two irreducible components of $\Disc^\sing$ and their tangent spaces at smooth points. We also introduce parametrizations of some of the varieties relevant for this work.
In Section \ref{sec:disc} we discuss distance minimization problems to discriminant and osculating varieties, their geometry and critical points.
We provide there several examples explicitly showing the role played by the caustic component $\cl C_d$.
Section \ref{sec::crit_points} is dedicated to the study and counting of critical points of the Bombieri-Weyl distance to $\Disc$ and $\cl C_d$.
In particular, we compute the ED degree of these varieties using the topological approach developed in \cite{AluffiHarris2018, MaximRodriguezWang2021}.
In Section \ref{sec:extremal}, we prove that Chebyshev form of degree $d$ is a unique (up to rotations and scalings) form maximizing the distance to the real discriminant among real-rooted binary forms of degree $d$ with fixed Bombieri-Weyl norm. 
In Section \ref{sec::questions} we collect some remarks, questions and further directions pertinent to this work.
In the Appendices, we prove useful formulas related to the Bombieri-Weyl product and Taylor expansion of a homogeneous polynomial, describe the dimension and singular loci of osculating varieties $\Okdn$ and state Hirzebruch's formula for the Euler characteristic of a complete intersection in the projective space.


\section{Preliminaries}\label{sec::prel}

In this section we collect definitions, basic results and auxiliary facts that we need.\smallskip

Let $\kk$ be a field of characteristic $0$ (for example, $\kk=\R$ or $\kk=\C$). The set of non-zero vectors of a subset $S$ of a vector space is denoted $S^{\times}$. 
Let $\cl E$ be a $\kk$-vector space of dimension $n$, that we identify with $\kk^{n}$
by choosing a (canonical) basis $\bm e_1,\ldots, \bm e_n \in \cl E$. 
We denote by $\Hun=\cl E ^*$ the dual space of $\cl E$, that is the vector space of linear forms on $\cl E$. The basis \AP{of $\Hun$}, dual to the canonical basis \AP{of $\cl E$}, is denoted by $\bm e^{1}, \ldots, \bm e^{n}$. For $X\in \Hun$, let $X^*= \sum_{i=1}^{n} X(\bm e_i) \, \bm e_i\in \cl E$. 
For $\x\in \cl E$, let $\x^* = \sum_{i=1}^n \bm e^{i}(\x)\, \bm e^i \in \Hun$.
We endow $\cl E$ (respectively, $\Hun$) with the (canonical) symmetric bilinear form with respect to which the basis vectors $\bm e_1,\dots, \bm e_n\in \cl E$ (respectively, $\bm e^1,\dots, \bm e^n\in \Hun$) are orthonormal.
In the case of $\kk=\R$, it defines a scalar product on $\cl E$ and $\H_{1}$. We use \AP{$\Sn^\R$} to denote the unit sphere in these spaces.
We will also write $\P^{n-1}_{\kk}$ for the projectivization of $\cl E$ or $\H_{1}$ in the case of any field $\kk$. 

We denote by $\H_{d}^{\kk}= \H_{d}= \Sym^{d}(\cl E^*)$, the $d$-th symmetric power of $\Hun= \cl E^*$, that we identify with the space of $n$-variate forms (homogeneous polynomials) of degree $d$ with coefficients in $\kk$. We use $N={n+d-1\choose d-1}=\dim_{\kk} \H_{d}$ for its dimension.
Similarly, for a vector subspace $L\subset \Hun$, $\H_{d}(L)= \Sym^d(L)$ denotes the $d$-th symmetric power of $L$. Hereafter, forms or elements of $\cl H_m$ for some $m\in \N$ will usually be denoted with upper case symbols.
In what follows we will always assume that $d\geq 3$.

\subsection{Bombieri-Weyl (apolar) product and its properties}\label{sub:Bomb}
We fix a basis $X_1, \ldots, X_n$ of $\Hun$ that is orthonormal with respect to the canonical bilinear form (for example, $\bm e^1,\dots, \bm e^n\in \Hun$). Then $(\Vector{X}^\alpha)_{|\alpha|=d}$ is a (monomial) basis of $\Hdn$. We endow the space $\H_{d}$ with the \emph{Bombieri-Weyl product} (also known as the \emph{apolar product}), 
which is a symmetric bilinear form defined as
\begin{align}\label{eq:Bomb}
  \langle F,G\rangle_B\ =\ \sum_{\vert\bm \alpha\vert=d} {d \choose \bm \alpha}^{-1} F_{\bm \alpha} G_{\bm \alpha}\quad\textup{for}\quad F\, =\, \sum_{\vert\bm \alpha\vert=d} F_{\bm \alpha} \Vector{X}^{\bm \alpha},\ G\, =\, \sum_{\vert \bm \alpha\vert=d} G_{\bm \alpha} \Vector{X}^{\bm \alpha}.
\end{align}
On $\Hun$ it coincides with the canonical symmetric bilinear form that we fix in the beginning. 
One can equivalently define the Bombieri product as 
\begin{align}\label{eq:Bomb2}
    \langle F,G\rangle_B\ =\ \frac{1}{d!} F(\bm D)\, G,
\end{align}
where $F(\bm D)=F(D_1,\dots,D_n)$ is the differential operator obtained from $F$ by replacing variables $X_i$ with respective partial derivatives $D_i=\frac{\partial}{\partial X_i}$.
When $\kk=\R$, these formulas define a scalar product on $\H_{d}$ whose associated norm we denote by $\Vert F\Vert_B=\sqrt{\langle F, F\rangle_B}$. 

For $U\in \Hun$, $k\in \N$, let
\begin{align}\label{eq:nabla}
\nabla_{U}^{k}F = \sum_{|\bm \alpha|= k} {k \choose \bm \alpha} D^{\alpha_1}_1\cdots D^{\alpha_n}_n F(U^*)X_1^{\alpha_1} \cdots X_n^{\alpha_n}  \in \H_{k}
\end{align}
be the degree $k$ form associated to all $k^{\textup{th}}$-order derivatives of $F$ at $U$.
Slightly abusing the notations, we will use  $\nabla_{U}^k F$ to also denote the associated symmetric tensor $\left( D_{i_1}\cdots D_{i_k} F(U^*)\right)_{i_1,\dots, i_k=1}^n$.
Thus, we write $\nabla_{U} F= \sum_{i=1}^{n} D_i F(U^*) X_i\in \Hun$ and $\nabla_{U}^2 F= \sum_{i,j=1}^{n} D_i D_j F(U^*) X_i X_j\in \H_{2}$ for the gradient and the Hessian matrix of $F$ at $U$. \smallskip

For any $F\in \H_{d}$, $U\in \Hun$ with $\scp{U,U}=1$, we define 
the homogeneous Taylor expansion of $F$ at $U$ as
\begin{align}
F = U^d \,\nabla_{U}^{\perp,0}F + U^{d-1}\,  \nabla_{U}^{\perp,1}F  + \frac 1 {2!}  U^{d-2}\, \nabla_{U}^{\perp,2}F+ \cdots + \frac 1 {d!} \nabla_{U}^{\perp,d}F,
\end{align}
where $U^{\perp}= \{X \in \Hun \mid \scp{X, U}=0\}$,  $\nabla_{U}^{\perp,k}F \in \H_{k}(U^{\perp})$, $\nabla_{U}^{\perp, 0}F= F(U^*)\in \kk$ (see \eqref{eq:taylor}). For convenience, we denote $\nabla_{U}^{\perp}F := \nabla_{U}^{\perp, 1}F$.
Observe that $\nabla^{\perp,k}_U F\in \H_k(U^{\perp})$ is the restriction of the degree $k$ form $\nabla^k_U F$ to the hyperplane $U^\perp$.

For $0\le k\le d$, the sum of the first $k+1$ terms $T^k_{U}(F)$ and of the last $d-k$ terms $R^{k+1}_{U}(F)$ of this expansion satisfy
\begin{align}
F = T^{k}_{U}(F) +  R^{k+1}_{U}(F) 
\end{align}
with $\scp{T^{k}_{U}(F),R^{k+1}_{U}(F)}_B=0$
(see Section \ref{sec:taylor}).
Hereafter we will denote by $T^k: (F,X) \in \Hdn \times \Hun \mapsto T^k_{X}(F)\in \Hdn$ (resp. $R^k: (F,X) \in \Hdn \times \Hun \mapsto R^k_{X}(F)\in \Hdn$) the rational maps associated to this Taylor expansion.


For $P\in \H_{k}$ and $F\in \H_{d}$ with $d \ge k$, we define the co-product of $F$ and $P$ as 
\begin{align}\label{eq:co-product}
F \llcorner P = 
\frac{(d-k)!}{d!}P(\bm D)(F) 
 \in   \H_{d-k},
\end{align}
where $P(\bm D)=P(D_1,\dots, D_n)$.
We also denote $\ann_k(F) =\{ P \in \H_{k} \mid  F \llcorner P = 0\}$.

Note that if $F$ is singular at $X^*\in \cl E$, then, $X \in \ann_1(\nabla^2_{X} F)$ since, by Euler's identity \eqref{eq:euler}, we have ${\nabla^2_{X}F}\llcorner X = (d-1) \nabla_{X} F=0$.

By the properties of the product \eqref{eq:Bomb} (see Appendix \ref{sec:prop apolar}), $F\llcorner P$ can be identified with the image of $P$ by the \emph{Hankel operator} or \emph{the catalecticant} associated to $F\in \H_{d}$ defined as follows:
\begin{align}\label{eq:Hankel}
  H^{k,d-k}_{F}:  \H_{d-k}\ &\rightarrow\  (\H_{k})^*\\
  P\ &\mapsto\  \scp{F, P \, \cdot}_{B} = \scp{F\llcorner P, \cdot}_B. 
\end{align}

\BM{In particular, if $F$ is singular at $X^*\in \cl E$, we have $\nabla^2_{X} F\in \H_2(X^{\perp})$} and hence the image of $H^{1,1}_{\nabla^2_{X} F}$ lies in the space $X^\perp\subseteq \H_{1}$ that is canonically identified with the dual of $\faktor{\H_{1}}{\kk\, X}$.  
In this case, we can define the \emph{induced Hankel operator}
\begin{align}\label{eq:induced_Hankel1}
    \overline{H^{1,1}_{\nabla_{\x}^{2} F}}: \faktor{\H_{1}}{\kk \, X}\ &\rightarrow\ \left(\faktor{\H_{1}}{\kk\, X}\right)^*,\quad
    G_1+\kk\, X\ \mapsto\ H^{1,1}_{\nabla^{2}_{\x} F}(G_1),
\end{align}
see \ref{eq:induced_Hankel2} for the generalization of this for \eqref{eq:Hankel} with $k=1$.

We refer to Appendix for some further properties of \eqref{eq:Bomb} and its behaviour with respect to the Taylor expansion of a homogeneous polynomial.

\subsection{Discriminant varieties}\label{sub:disc}

Elements in $\H_{d}$ that have a singular point in $\P_{\kk}(\cl E)$ form the \emph{$\kk$-discriminant}
\begin{align}\label{eq:discr}
\disc_{d}^{\kk}\ =
{\bigcup\limits_{X \in \cl H_1^{\kk, \times}} \disc_{d, X}}
,\quad
\disc_{d, X}\ =\ \left\{F \in \H_{d}: D_i F(X^*)= 0,\ i=1,\ldots, n\right\}.
\end{align}
Note that the set $\disc_{d,X}$ of forms that are singular at a given point $X^*\in \P_\kk(\cl E)$ is a linear space of $\H_{d}$ of codimension $n$. 

In the sequel we abuse the notation and use $\disc_{d} := \disc_{d}^{\C} \subset \H_{d}$ to also denote the projectivization of \eqref{eq:discr} and refer to both as the discriminant.

It is known that $\disc_{d}^{\C}$ is an irreducible hypersurface of $\H_{d}$, defined by a single polynomial (with \BM{integer} coefficients) also called the discriminant \cite[Ch. 13]{GKZ1994}.

In the case $\kk=\R$, the set \eqref{eq:discr} is a semi-algebraic set of codimension $1$, called \emph{the real discriminant} by Raffalli in \cite{Raffalli2014}. 
Note that $\disc_{d}^\R$ is (in general) strictly contained in the real locus $\Disc^\C(\R)$ of the $\C$-discriminant, since the latter contains real forms having complex (conjugate) singular points and no real ones (See Fig. \ref{fig:swallowtail}). Moreover, the difference $\Disc^\C(\R)\setminus \disc_{d}^\R$ has codimension two (cf. Proposition \ref{prop:sing_discr}) and consists of exactly such real forms. 
We refer to Section \ref{sec:disc} for more details, where we also recall the distance minimization problem (with respect to the distance induced by \eqref{eq:Bomb}) to $\disc_{d}^{\R}$ studied in \cite{Raffalli2014}.\smallskip

For $k\geq 0$, define the $k^{\mathrm{th}}$ discriminant (see e.g. \cite{AluffiCukierman1993}) as
\begin{align}\label{eq:k_discr}
\disc^{k, \kk}_{d} = \bigcup\limits_{X \in \Hun^{\times,\kk}} \disc^{k}_{d, X},\quad\ 
\disc^{k}_{d, X} = \left\{F \in \H_{d}: \bm D^{\,\bm \alpha} F(X^*) = 0,\ \vert\bm \alpha\vert\le k\right\}.
\end{align}
In other words, $\Disc^{k} := \Disc^{k, \kk}$ consists of forms $F\in \H_{d}$ having (at least) a $k$-fold singular point in $\P^{n-1}_{\kk}$. 
By definition, $\disc_{d, X}^{0}= \{ G\in \H_{d}\mid \ G(X^*)=0\}$ and $\disc_{d}^0 = \Hdn$, when $\kk$ is algebraically closed.
Trivially, $\disc_{d}^1=\disc_{d}$ and inclusions $\disc_{d}^{k+1}\subseteq \disc_{d}^k $ hold for all $k\geq 0$.

We easily check that for any $G \in \disc_{d,X}^{k-1}$ we have $T^{k-1}_{X}(G)= 0$
and $G = R^{k}_{X}(G)$, and that for any $F\in \Hdn$, $R^{k}_{X}(F) \in \disc_{d,X}^{k-1} \subset \Disc^{k-1}$.  
This leads to the following parametrization of the elements $R^k_{X}(F)$:
\begin{align}\label{eq:varphik}
\varphi_k  : (\kk^{n-1} &\times \hat \H_{k} \times  \cdots \times \hat \H_{d}) \to \Disc^{k-1}\subset \Hdn, \\ 
&\varphi_k (\lb, H_k, \dots, H_d) = \sum_{i=k}^{d} X_{\lb} ^{d-i} H_i^{\lb},
\end{align}
where $\set{X_0, X_1, \ldots, X_{n-1}}$ is a basis of $\Hun$, $X_{\lb}= X_0+ \l_1 X_1 +\cdots + \l_{n-1} X_{n-1}$, $\hat \H_i = \H(\kk^{n-1})$,
$H_i^{\lb}= H_i(X_1-\l_1 X_0, \ldots, X_{n-1}-\l_{n-1} X_0) \in \H_{i}(X_{\lb}^{\perp})$.
By uniqueness of the Taylor expansion at $X_{\lb}$, we have $\frac 1 {i!} \nabla^{i,\perp}_{X_{\lb}}F=H_i^{\lb}$ and $R^k_{X_{\lb}}(F)= F$ for 
$F= \varphi_{k}(\lb, H_k, \ldots, H_d)$, 
This map parametrizes the Taylor expansions $R^k_{X}(F)$ \emph{nearby} $X_0$. It depends on the choice of the basis $\set{X_0, X_1, \ldots, X_{n-1}}$ of $\Hun$. 

When $\kk=\C$, the discriminant $\disc_{d}\subset \P^{N-1}_{\C}$ is a singular hypersurface. We now \BM{prove} a result describing its singular locus, that one can regard as a direct consequence of \cite[Prop. 5.5.2]{Teissier1977HuntingInvariants}\BM{, \cite{Dimca1986MilnorDual}, or \cite[Thm. 8.13]{Tevelev2003ProjectivelyDual}.}

\begin{proposition}\label{prop:sing_discr} Let $\kk= \C$.
The smooth locus of the discriminant hypersurface $\Disc$ is 
$$
\smooth(\Disc)=\{F \in \Hdn\,:\, \nabla_X F =0,\ \textrm{\normalfont{rank}}\, \nabla^2_{X}F = n-1\ \textrm{for a unique}\ X\in \P^{n-1}_{\C}\}.
$$

The singular locus of $\disc_{d}\subset \Hdn$ consists of two irreducible components, $\sing(\disc_{d})=\mathcal{C}_{d}\cup \cl M_{d}$, where 
    {\small\begin{align}
        \mathcal{C}_{d}\ &=\ \left\{F\in \Hdn\,:\, \nabla_{X}F=0,\ \textrm{\normalfont{rank}}\, \nabla_{X}^2 F \le n-2\ \textrm{for some}\ X\in \Hun^{\times}\right\}, \label{eq:cusp} \\
        \quad \cl M_{d}\ &=\ \zariski{\left\{ F\in \Hdn\,:\, \nabla_{X} F=\nabla_{Y} F=0\ \textrm{for linearly independent}\ X, Y\in \Hun^{\times}\right\}}        \label{eq:node}
    \end{align}}both of codimension two in $\Hdn$.
\end{proposition}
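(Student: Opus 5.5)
We realize $\Disc$ as the image of the incidence variety
$$\cl I=\{(F,[X])\in \Hdn\times\P^{n-1}_{\C}\,:\,\nabla_X F=0\},$$
a vector bundle of rank $N-n$ over $\P^{n-1}_{\C}$. Hence $\cl I$ is smooth and irreducible of dimension $N-1$, and the projection $\pi:\cl I\to\Hdn$ has image $\Disc$. The whole argument rests on computing the differential of $\pi$ and using the linear structure of the fibres $\disc_{d,X}$.

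\textbf{Step 1: the differential of $\pi$.} Work in the local chart $X_{\lb}$ and use the parametrization $\varphi_2$ from \eqref{eq:varphik}, i.e. $F=\sum_{i\ge2}X_{\lb}^{d-i}H_i^{\lb}$. Differentiating in $\lb$ at $\lb=0$ gives $\partial_{\l_j}F= -(d-2)$-type terms coming from $X_{\lb}^{d-2}H_2^{\lb}$. Modulo the tangent space to the fibre $\disc_{d,X_0}$, the image of $d\pi$ in the normal direction is
$$\{X_0^{d-1}\,(\nabla^{\perp,2}_{X_0}F\llcorner V)\,:\,V\in X_0^\perp\},$$
up to a constant. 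Consequently $d\pi_{(F,X)}$ has corank equal to $\dim\ker \overline{H^{1,1}_{\nabla^2_XF}}=(n-1)-\rank\nabla^2_XF$. Recall that $\nabla_XF=0$ forces $X\in\ann_1$, so the rank is at most $n-1$.

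\textbf{Step 2: the smooth locus.} Take $F$ with a unique singular point $X$ and $\rank\nabla^2_XF=n-1$. Then $\pi^{-1}(F)$ is a single reduced point and $d\pi$ is injective there. Since $\pi$ is proper, $\Disc$ is locally the image of an immersion near $F$, hence smooth with tangent hyperplane $\{G:G(X^*)=0\}$.

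Conversely, suppose $F$ is a smooth point of $\Disc$. If $F$ has two singular points $X\neq Y$, we obtain two tangent hyperplanes $\{G(X^*)=0\}$ and $\{G(Y^*)=0\}$. These are distinct for $d\ge 1$ because the map $X\mapsto X^d$ is injective on $\P^{n-1}$. A hypersurface cannot have two distinct limiting tangent hyperplanes at a smooth point, so a smooth point has a unique singular point $X$. It remains to rule out $\rank\nabla^2_XF\le n-2$.

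\textbf{Step 3: the degenerate-Hessian case (main obstacle).} This is where the degenerate rank must be excluded, and it is the hardest step. Here $\pi$ is finite near $F$ but not immersive. Using the Taylor form, restrict to the slice $\Hdn$ near $F$ modulo the linear directions in which $\Disc$ is a product. One obtains, up to smooth factors, the discriminant of the family $a_0+a_1t+a_2t^2+t^3$ in the kernel direction. This is the cuspidal (swallowtail-type) singularity, so the point is singular. Equivalently, the multiplicity of $\Disc$ at $F$ is at least $2$, since a generic line through $F$ meets $\Disc$ near $F$ in $\ge2$ points counted with multiplicity. For the latter I would compute the order of vanishing of $\mathrm{disc}(F+tG)$, or appeal directly to \cite[Prop. 5.5.2]{Teissier1977HuntingInvariants}. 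With this, $\sing(\Disc)=\cl C_d\cup\cl M_d$ as sets, where the closure in \eqref{eq:node} is harmless because the singular locus is closed.

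\textbf{Step 4: irreducibility and codimension.}

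For $\cl C_d$: it is the image of $\{(F,X):\nabla_XF=0,\ \det\overline{H^{1,1}_{\nabla^2_XF}}=0\}$. Each fibre over $X$ is a determinantal hypersurface of symmetric $(n-1)\times(n-1)$ matrices, which is irreducible, inside the linear space $\disc_{d,X}$. The total space is therefore irreducible of dimension $N-2$. Generic finiteness of the projection follows from exhibiting a single form with one $A_2$ point and no other singularities, e.g. a cusp form $X_0^{d-3}(X_0X_1^2 + X_2^3)+\cdots$ chosen generically. Hence $\operatorname{codim}\cl C_d=2$.

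For $\cl M_d$: it is the closure of the image of a rank-$(N-2n)$ vector bundle over an open subset of $\P^{n-1}\times\P^{n-1}$. This is irreducible of dimension $N-2n+2n-2=N-2$, and the projection is generically finite by a similar example. Hence $\operatorname{codim}\cl M_d=2$.

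Finally, neither component contains the other: a generic element of $\cl M_d$ has two nondegenerate ($A_1$) singular points, while a generic element of $\cl C_d$ has a single singular point. Both have the same dimension, so they are distinct irreducible components.
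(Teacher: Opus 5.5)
Steps 1, 2 and 4 are sound. In particular, your converse in Step 2 is a correct and more elementary substitute for the paper's Milnor-number argument on $\cl M_d$. You argue via two distinct limiting tangent hyperplanes $\{G(X^*)=0\}\neq\{G(Y^*)=0\}$. This works because points of the irreducible $\cl I$ lying over forms with a single nondegenerate singular point accumulate at both $(F,X)$ and $(F,Y)$.

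The genuine gap is Step 3. You assert that at every $F$ with a unique singular point $X$ and $\rank\nabla^2_XF\le n-2$, the discriminant is locally, up to a smooth factor, the discriminant of $a_0+a_1t+a_2t^2+t^3$. That is the local model only at an $A_2$ point: $\nabla^{\perp,2}_XF$ has corank exactly one on $X^\perp$ and the cubic term in the kernel direction is nonzero. If the corank is at least two, there is no single kernel variable $t$; a $D_4$ point already has a different local discriminant. If the cubic term vanishes, you get $A_3$ (the swallowtail) or worse. Points of $\cl C_d\setminus\cl M_d$ of these types exist whenever $d,n$ allow them. Even at $A_2$ points, "up to smooth factors" needs a versality statement for the family $\Hdn$ at $F$, which you do not give. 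Your fallbacks are not proofs either: "a generic line through $F$ meets $\Disc$ in at least two points near $F$" and "compute the order of vanishing of $\mathrm{disc}(F+tG)$" both just restate $\mathrm{mult}_F\Disc\ge 2$. The paper gets exactly this from the Teissier/Dimca formula $\mathrm{mult}_F\Delta_d=\sum_X\mu_X(F)$, valid when $F$ has finitely many singular points, plus density of such $F$ in $\cl C_d\cup\cl M_d$.

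You already have the tools to close the gap without Milnor numbers. Route (i): prove singularity only at $A_2$ points. Then note that, by your Step 4 and your example, forms in $\cl C_d$ with a single $A_2$ point form a nonempty Zariski-open, hence dense, subset of the irreducible $\cl C_d$, and $\sing(\Disc)$ is closed.

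Route (ii), cleaner: combine Step 1 with Zariski's Main Theorem.
\begin{itemize}
\item $\pi$ is proper and birational onto $\Disc$, since a general point of $\Disc$ has one singular point.
\item Over a neighbourhood of an $F$ with a unique singular point, $\pi$ is quasi-finite, hence finite.
\item If $\Disc$ were smooth, hence normal, at $F$, a finite birational morphism onto it would be an isomorphism near $(F,X)$.
\item Then $d\pi_{(F,X)}$ would be injective, i.e. $\rank\nabla^2_XF=n-1$ by your corank computation.
\end{itemize}
This treats all degenerate singularity types at once and is the natural completion of your approach.
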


\begin{proof}
    Let $U\subset \Xi_{d}$ denote the set of forms $F$ that have a unique singular point $X^*\in \P{\cl{E}}$ so that $\nabla^2_{X} F$ has  rank $n-1$.
Then $U\subseteq \smooth(\Xi_{d})$ by \cite[Thm. 8.12]{Tevelev2003ProjectivelyDual}.
The complement of $U$ is precisely the union of $\mathcal{C}_{d}$ and $\cl M_{d}$ that are defined in \eqref{eq:cusp} and \eqref{eq:node} respectively. \BM{This shows that $\cl C_d \cup \cl M_d\supset \sing (\Xi_{d})$.}


\BM{We first prove} that $\mathcal{C}_{d}$ and $\cl M_{d}$ are \BM{irreducible} of codimension one in $\Xi_{d}$.
Indeed, $\Xi_{d}=\bigcup_{X\in \Hun^{\times}} \Xi_{d,X}$ is a hypersurface in $\H_{d}$ and we have $\mathcal{C}_{d}=\bigcup_{X\in \Hun^{\times}} \mathcal{C}_{d}\cap \Xi_{d,X}$, $\cl M_{d}=\bigcup_{X\in\Hun^{\times}} \cl M_{d}\cap \Xi_{d,X}$.
Inside \AP{every} linear space $\Xi_{d,X}$, the forms $F=R_{X}^2(F)$ satisfying $\textrm{rank} \nabla^2_{X} F\leq n-2$ constitute a hypersurface $\mathcal{C}_{d}\cap \Xi_{d,X}$ (given by the vanishing of the determinant of the induced Hankel operator $\overline{H_{\nabla^2_{X} F}^{1,1}}$ defined in \eqref{eq:induced_Hankel1}).
\BM{Note that $\cl C_d \cap \disc_{d,X}$ is the Zariski closure of the first projection of the incidence variety $\cl K_{X}  = \{(F,W)\in \disc_{d,X} \times \P\H_1(X^{\perp}) : \nabla_{X}^{2,\perp} F\llcorner W = 0\}$. The fiber of the second projection is linear of codimension $n-1$. Since the fiber of the first projection is generically of dimension $0$,  we deduce  by the fiber theorem (see e.g. \cite[Thm. 1.26]{Shafarevich2013}), that $\cl K_X$ and $\cl C_d \cap \disc_{d,X}$ are irreducible of dimension $\dim(\disc_{d,X})-1$, and that $\cl C_d$ is irreducible of codimension $2$ in $\Hdn$.}

Similarly, inside \AP{every} linear space $\Xi_{d,X}$, the forms having another singularity $Y^*\in \P{\cl{E}}$ constitute a hypersurface $\cl M_{d}\cap \Xi_{d,X}$ (obtained as the closure of the projection of the associated incidence variety $\BM{\cl N_{X}  =} \{(F,Y)\in \Xi_{d,X} \times \P \Hun \,:\, X,Y \textup{ linearly independent}, \nabla_{Y} F=0\}$). 
\BM{The fiber of the second projection of $\cl N_X$ on $Y\in \P\Hun$ is a linear subset of codimension $n$ in $\disc_{d,X}$, when $X,Y$ are linearly independent. We deduce as above that $\cl N_{X}$ is irreducible of dimension $\dim(\disc_{d,X})-1$. So is its first projection $\cl M_d\cap \disc_{d,X}$, since its generic fiber is of dimension $0$.
We deduce that $\cl M_d$ is also irreducible of codimension $1$ in $\Disc$, that is, of codimension $2$ in $\Hdn$.
}

\BM{The set $U^{\circ}$ of forms $F\in \cl C_{d}\cup \cl M_{d}$ with only finitely many singular points is dense in $C_{d}\cup \cl M_{d}$, since the first projection of $\cl N_X$ has generic fibers of dimension $0$.}
Then, by \cite{Dimca1986MilnorDual} or \cite[Ch. III, 4.5]{TeissierVanishing1973}, the multiplicity of the discriminant polynomial $\Delta_d$ at such a form $F$ equals $\sum_{X\in \P(\Hun)} \mu_{X} (F)$, the sum of Milnor numbers of singularities of $F$. 
Since $F$ either has (at least) a cuspidal singularity ($F\in \mathcal{C}_{d}$) or two distinct singular points ($F\in \cl M_{d}$), the above sum is at least two. Hence $F\in \sing (\Xi_{d})$.
\BM{We deduce that $\overline{U^{\circ}}= \cl C_{d}\cup \cl M_{d}= \sing (\Xi_{d})$}.
\end{proof}

\begin{example}\label{ex:smooth disc}
Let $X_0, \ldots, X_{n-1}$ be a basis of $\Hun$ and 
$$
G = X_0^{d-2} (\sum_{j=1}^{n-1}X_j^2) + \sum_{j=1}^{n-1} a_j\,X_j^d
$$
where $a_j\in \kk$ are generic (non-zero) coefficients. 

We verify directly that $\nabla_{X_0}G = 0$ so that  $G\in \Disc$ and that $\rank \napp G= n-1$.
Let us show that $G \not \in \cl M_{d}$, i.e. there is no $Y = \sum_{j=0}^{n-1} y_j X_j\in \Hun \setminus \vspan{X_0}$ such that $\nabla_{Y} G = 0$, that is,
\begin{align}
0 & =   (d-2) y_0^{d-3} (\sum_{i=1}^{n-1} 
y_i^2 ) = 0\\
0 & =  2 y_0^{d-2} y_j + d \,a_j\, y_j^{d-1} = 0 \textup{ for } j=1, \ldots, n-1.
\end{align}
Solving the last equations, we deduce that 
$y_j$ is either $0$ or a $(d-2)^{\mathrm{th}}$ root of $- \frac {2 y_0^{d-2}} {a_j d}$. 
Since $a_j$ are (non-zero) generic, the non-trivial solutions of this system are such that $y_1= \cdots = y_{n-1}= 0$ and $y_0\neq 0$.
That is $Y\in \vspan{X_0}$. 
By \AP{\cite[Thm. 8.12]{Tevelev2003ProjectivelyDual}}, see Proposition \ref{prop:sing_discr}, $G\in \smooth(\Disc)$.
\end{example}
\subsection{Veronese variety and its osculating varieties}

The \emph{Veronese variety} $\Ver\subset \H_{d}$ consists of $d^{\mathrm{th}}$ powers of linear forms,
\begin{align}\label{eq:Veronese}
\Ver\ =\bigcup_{X\in \Hun} \cl V_{d,X},\quad  \cl V_{d,X}= X^d\cdot \cl H_{0}\quad \textrm{for}\quad X\in \Hun,
\end{align}
where $\cl H_{0}= \kk$. We also consider the \emph{tangential variety} to $\Ver$ that is defined as 
\begin{align}\label{eq:tangential}
\mathcal{T}_{d}\ = \bigcup_{X\in \Hun} \cl T_{d,X},\quad \mathcal{T}_{d,X}\ =\ X^{d-1} \cdot \Hun \quad \textrm{for}\quad X\in \Hun.
\end{align}
More generally, for any nonnegative integer $k\leq d$, the \emph{$k$th osculating variety} to $\Ver$ is defined as 
\begin{align}\label{eq:osculating_variety}
\mathcal{O}^k_{d}\ =\ \bigcup_{X \in \Hun} \cl O^{k}_{d,X},
\quad
\mathcal{O}^k_{d,X}= X^{d-k}\cdot \H_{k} \quad \textrm{for}\quad X\in \Hun.
\end{align}

The osculating variety $\Okdn$ can be parametrized \emph{locally} as follows:
\begin{align}\label{eq:psik}
\psi_k  : \kk^{n-1} &\times  \H_{k}  \to \Okdn\subset \Hdn, \\ 
&\psi_k (\lb, W) = X_{\lb}^{d-k}\, W,
\end{align}
where $\set{X_0, X_1, \ldots, X_{n-1}}$ is a basis of $\Hun$ and $X_{\lb}= X_0+ \l_1 X_1 +\cdots + \l_{n-1} X_{n-1}$. 
Note that the image of $\psi_k$ is the  open dense subset $\tilde{\mathcal{O}}^k_{d} = \cup_{X \not \in \vspan{X_1, \ldots, X_{n-1}}}  \cl O^{k}_{d,X}$, since 
for any \BM{$F=X^{d-k} W\in \tilde{\mathcal{O}}^k_{d}$ with} $X \not \in \vspan{X_1, \ldots, X_{n-1}}$, there exist a unique $\l_0\neq 0$ and a unique $\lb \in \kk^{n-1}$ such that $X= \l_0 X_{\lb}$, so that $F= X_{\lb} ^{d-k} (\l_0^{d-k} W) = \psi_k(\lb, \l_0^{d-k} W) \in \Okdn$.
The map $\psi_k$ is also injective when $d> 2k$ or when $d\le 2 k$ for $W\not \in \cl O^{2k-d}_{k}$, since 
$X_{\lb_0}^{d-k}\, W_0= X_{\lb_1}^{d-k}\, W_1$ for distinct $X_{\lb_0}, X_{\lb_1} \in \Hun$ and $W_0, W_1 \in \H_{k}$ implies that $W_0 = X_{\lb_1}^{d-k} W_0' \in \cl O^{2k-d}_{k}$.


Since $\Ver$ and, more generally, $\cl O^k_{d}$ are affine cones in $\H_{d}$ we will abuse notations and denote their projectivizations in $\P^{N-1}_{\kk}$ by the same letters.


One has $\cl O^{0}_{d}= \cl V_{d}$,  $\cl O^{1}_{d}= \cl T_{d}$, $\cl O^{d}_{d} = \Hdn$, and $\cl O^{k}_{d} \subseteq \cl O^{k+1}_{d}$ for $0\le k<d$. 
The dimension and singularities of $\Okdn$ are analysed in Appendix \ref{sec:ok sing}.
In particular, we see that the variety $\cl O^{k}_{d}$ is singular along $\cl O^{k-1}_{d}$.
For $k<\frac d 2$, $\sing(\Okdn)= \Odn^{k-1}$.
%
%
We have in particular that for $d>2$, $\dim(\cl T_{d})= 2n -1$ and $\sing(\cl T_{d})= \cl V_{d}$. Also, 
$\dim(\Odn^2)= \frac 1 2 (n^2+3 n -2)$ and $\sing(\Odn^{2})= \cl T_{d}$.

\begin{lemma}\label{lem:tgtOk}
The tangent space of $\Okdn$ at a smooth point $F = X_0^{d-k} W_0 \in \smooth(\cl O_{d}^{k})$ is
$$
T_{F}(\Okdn) = X_0^{d-k-1}W_0 \, \H_{1} + X_0^{d-k} \, \H_{k}.
$$
\end{lemma}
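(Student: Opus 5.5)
The plan is to compute the tangent space as the image of the differential of the local parametrization $\psi_k$ from \eqref{eq:psik}, and then to use smoothness of $F$ to conclude that this image is the whole tangent space. Choose a basis $\{X_0,X_1,\dots,X_{n-1}\}$ of $\Hun$ whose first element is the given $X_0$. Then $F=\psi_k(\bm 0,W_0)$, and the image of $\psi_k$ is the open dense subset $\tilde{\cl O}^k_d$ of $\Okdn$, which contains $F$.

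\textbf{Step 1: the differential.} I will differentiate $\psi_k(\lb,W)=X_{\lb}^{d-k}W$ at $(\bm 0,W_0)$. Since $\partial X_{\lb}/\partial\l_i=X_i$, we get
\begin{align}
\partial_{\l_i}\psi_k(\bm 0,W_0)&=(d-k)X_0^{d-k-1}X_iW_0,\qquad i=1,\dots,n-1,\\
\partial_{W}\psi_k(\bm 0,W_0)(\dot W)&=X_0^{d-k}\dot W,\qquad \dot W\in\H_k.
\end{align}
The image of $d\psi_k$ is therefore $X_0^{d-k-1}W_0\,\vspan{X_1,\dots,X_{n-1}}+X_0^{d-k}\H_k$. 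The missing direction $X_0^{d-k-1}W_0X_0=X_0^{d-k}W_0$ already lies in $X_0^{d-k}\H_k$, so the image equals $X_0^{d-k-1}W_0\,\H_1+X_0^{d-k}\H_k$. This is the space $V$ claimed in the statement.

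\textbf{Step 2: $V\subseteq T_F(\Okdn)$.} This is immediate, since $V$ consists of velocity vectors of curves $t\mapsto\psi_k(t\lb',W_0+t\dot W)$ lying in $\Okdn$.

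\textbf{Step 3: equality.} This is the main point. Since $F$ is a smooth point, $\dim T_F(\Okdn)=\dim\Okdn$, so it suffices to show $\dim V\ge\dim\Okdn$. The image of $\psi_k$ is dense in $\Okdn$, so $\dim\Okdn$ equals the generic rank of $d\psi_k$. The rank of a differential is lower semicontinuous, so its generic value is its maximal value. I will argue by homogeneity: precomposing with a linear change of coordinates sending $X_0$ to a generic $X$ shows that the rank of $d\psi_k$ at $(\bm 0,W)$ equals its rank at $(\lb,W')$ for corresponding data. Hence the generic rank is attained at points of the form $(\bm 0,W)$ with $W$ generic.

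The remaining difficulty is that $W_0$ itself might not be generic. To handle this, I would use the dimension formula for $\Okdn$ from Appendix \ref{sec:ok sing}, together with the fact that smooth points of $\Okdn$ avoid the singular locus, which contains $\cl O^{k-1}_d$. This means $W_0$ is not divisible by $X_0$. I then need to compute $\dim V$ directly: it is $\dim\H_k$ plus the dimension of $X_0^{d-k-1}W_0\,\vspan{X_1,\dots,X_{n-1}}$ modulo $X_0^{d-k}\H_k$. That quotient dimension is $n-1$ exactly when no nonzero $L\in\vspan{X_1,\dots,X_{n-1}}$ satisfies $X_0\mid LW_0$, which holds because $X_0\nmid W_0$ and $X_0\nmid L$.

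This gives $\dim V=\dim\H_k+n-1$. In the range where this equals $\dim\Okdn$, which includes $k<d/2$, this gives equality. In the other range ($d\le 2k$), where the appendix gives a smaller dimension for $\Okdn$, I would instead rely on the general fact that at a smooth point, the tangent space contains the image of $d\psi_k$ with equality once the rank there is maximal. I expect this case analysis, which matches $\dim V$ with the appendix's $\dim\Okdn$, to be the only real obstacle.
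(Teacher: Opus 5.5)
Your proof is correct, but its decisive step differs from the paper's. Both arguments use the parametrization $\psi_k$ and the appendix fact $\cl O^{k-1}_{d}\subseteq\sing(\Okdn)$ to get $X_0\nmid W_0$.

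\emph{The paper's route.} It observes that $\psi_k$ is locally injective at $(\bm 0,W_0)$. It then invokes the theorem that an injective holomorphic map between complex manifolds of the same dimension is a local biholomorphism. Hence $d\psi_k$ at $(\bm 0,W_0)$ is automatically an isomorphism onto $T_F(\Okdn)$.

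\emph{Your route.} You verify full rank by hand, via unique factorization ($X_0\mid LW_0$ forces $X_0\mid L$ or $X_0\mid W_0$). You then conclude from $V\subseteq T_F(\Okdn)$ together with $\dim V=\dim T_F(\Okdn)$.

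\emph{What each buys.} Your argument is more elementary and purely algebraic. It needs no local injectivity and no complex-analytic injectivity theorem, and it works for Zariski tangent spaces over any field of characteristic $0$. The paper's argument yields the extra information that $\psi_k$ is a local chart of $\Okdn$ at $F$.

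\emph{One correction.} The case split at the end comes from a misreading of Proposition \ref{prop:osc dim sing}. That proposition gives $\dim\Okdn=\binom{k+n-1}{k}+n-1=\dim\H_k+n-1$ for all $k<d$. The threshold $k=d/2$ only affects the singular locus, through the extra component $\cl D^{k}_{d}$. So your count $\dim V=\dim\H_k+n-1$ already finishes every case.

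In fact you do not even need the appendix's dimension formula. Full rank of $d\psi_k$ at one point of the irreducible domain $\kk^{n-1}\times\H_k$ forces generic rank $n-1+\dim\H_k$, hence $\dim\Okdn=n-1+\dim\H_k$. Likewise, the homogeneity detour and the concern that $W_0$ might not be generic are superfluous: the only property of $W_0$ you use is $X_0\nmid W_0$.
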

\begin{proof}
\AP{By Appendix \ref{sec:ok sing} $F\notin \cl O_{d}^{k-1}$.  Therefore, we may assume that 
$F= X_0^{d-k} W_0 \in \Okdn$ with $\set{X_0, X_1, \ldots, X_{n-1}}$ a basis of $\Hun$, so that $F=\psi_k(\bm 0,W_0)$ and $W_0 \notin X_0\cl H_{k-1}$. Then the map $\psi_k$ is locally injective at $(0,W_0)$.  As a holomorphic injective map between two smooth complex manifolds of the same dimension it is a local diffeomorphism.  Therefore} 
the tangent space of $\Okdn$ at $F$, obtained by differentiating $\psi_k$ with respect to $\lb$ and $W$, is
\begin{align}
T_F(\Okdn)   = \mathrm{Im} (d\psi_k)_{|(\bm 0, W_0)} 
 & = X_0^{d-k-1} \vspan{X_j}_{j=1, \ldots, n-1} W_0 \oplus X_0^{d-k} \H_{k} \\ & = X_0^{d-k-1} W_0 \Hun + X_0^{d-k} \H_{k},
\end{align}
which proves the lemma.
\end{proof}
It is well-known than $\Ver$ is smooth because it is an orbit of the standard $GL(n)$-action on $\H_{d}$.  
By the above lemma, the tangent space of $\Ver$ at $F = X_0^d W_0\in \Ver$ with $X_0 \in \Hun$, $W_0 \in \kk$ and $F \neq 0$ is 
$T_{F}(\Ver) = \cl T_{d,X_0}=  X_0^{d-1}\, \Hun$.
\subsection{Projective duality}\label{sec:proj dual}

Recall that for a variety $\cl X\subset \P_{\kk}^{N-1}$ its \emph{(projectively) dual} is a variety $\cl X^\vee \subset (\P_{\kk}^{N-1})^{\vee}$ in the dual projective space defined as 
\begin{align}
\cl X^\vee\ =\ \overline{ \left\{H\in (\P_{\kk}^{N-1})^\vee\,:\, H|_{T_{\x} (\cl X)}=0\ \textrm{for some}\ \x\in \smooth(\cl X)\right\}}^{\textrm{Zar}}.
\end{align}
Namely, $\cl X^{\vee}$ is the Zariski closure of the set of hyperplanes containing tangent spaces at smooth points of $\cl X$.
When $\kk^N$ is endowed with a non-degenerate bilinear form (like \eqref{eq:Bomb} for $\kk^N=\H_{d}$), one can identify $\cl X^{\vee}$ with a subvariety of $\P_{\kk}^{N-1}$, so both $\cl X$ and its dual lie in the same projective space.

\begin{proposition}\label{prop:dual}
The projectively dual variety to $\Okdn$ is given as the Zariski closure of 
{\small\begin{align}
\cl C^{k}_{d} = \left\{ F \in \H_{d} \,\big|\, \nabla_{X_0}^{k} F =0, {\nabla_{X_0}^{k+1} F} \llcorner W_0 =0 \textrm{ for some } X_0 \in \Hun^{\times}, W_0\in \H_{k}\setminus X_0\cdot \H_{k-1}\right\},
\end{align}}where $\nabla_{X_0}^k F$ is defined in \eqref{eq:nabla}. It consists of forms $F\in \disc_{d,X_0}^k$, that vanish to order (at least) $k$ at some $X_0\in \Hun$ and the catalecticant of $\nabla_{X_0}^{k+1}F$ has a ``non-trivial kernel'' in degree $k$.
\end{proposition}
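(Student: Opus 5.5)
The plan is to compute the dual directly from the definition, using the tangent spaces of Lemma \ref{lem:tgtOk} together with the Bombieri-Weyl product \eqref{eq:Bomb} to identify hyperplanes with forms $F\in\Hdn$. By definition, $(\Okdn)^\vee$ is the Zariski closure of the set of $F$ such that $\scp{F,T_G(\Okdn)}_B=0$ for some smooth point $G=X_0^{d-k}W_0$. By Appendix \ref{sec:ok sing} such a $G$ lies outside $\cl O^{k-1}_d$, which means exactly that $W_0\notin X_0\cdot\H_{k-1}$. Since the smooth points form a dense open subset of $\Okdn$, it suffices to show the following: for such a pair $(X_0,W_0)$,
\begin{equation}
\scp{F,\ X_0^{d-k-1}W_0\,\H_1+X_0^{d-k}\,\H_k}_B=0
\iff
\nabla^k_{X_0}F=0\ \text{and}\ \nabla^{k+1}_{X_0}F\llcorner W_0=0 .
\end{equation}
Taking the union over all admissible $(X_0,W_0)$ then gives exactly $\cl C^k_d$, and its closure is the dual.

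The key tool is the adjunction of Appendix \ref{sec:prop apolar}: $\scp{F,PQ}_B=\scp{F\llcorner P,Q}_B$, equivalently the Hankel operator \eqref{eq:Hankel}. For a power of a linear form we also need the identity
\begin{equation}
F\llcorner X_0^{m}=\tfrac{(d-m)!}{d!}\,(X_0^*\cdot\nabla)^mF=c_{d,m}\,\nabla^{d-m}_{X_0}F ,
\end{equation}
with a nonzero constant $c_{d,m}$. This holds because the $m$-fold directional derivative of $F$ along $X_0^*$ is a form of degree $d-m$ whose coefficients are the order-$(d-m)$ derivatives of $F$ at $X_0^*$, by Euler/Taylor homogeneity (cf.\ \eqref{eq:euler}). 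The first condition, $\scp{F,X_0^{d-k}\H_k}_B=0$, then becomes $F\llcorner X_0^{d-k}=0$ in $\H_k$, i.e.\ $\nabla^k_{X_0}F=0$. In other words $F\in\disc^{k}_{d,X_0}$, which also forces all lower-order derivatives at $X_0^*$ to vanish.

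For the second condition, write $\scp{F,X_0^{d-k-1}W_0L}_B=\scp{(F\llcorner X_0^{d-k-1})\llcorner W_0,L}_B$ for $L\in\H_1$. Using the identity above with $m=d-k-1$ gives $F\llcorner X_0^{d-k-1}=c\,\nabla^{k+1}_{X_0}F$. Hence the vanishing for all $L\in\H_1$ is exactly $\nabla^{k+1}_{X_0}F\llcorner W_0=0$, since the Bombieri-Weyl form is nondegenerate on $\H_1$. This proves the displayed equivalence.

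The main obstacle is verifying the identification $F\llcorner X_0^m\propto\nabla^{d-m}_{X_0}F$ with the paper's normalizations, including the multinomial weights in \eqref{eq:nabla}. That the constant is nonzero is all that is needed. I would check it first for $F=Y^d$, where both sides are multiples of $\scp{X_0,Y}^mY^{d-m}$, and then extend by linearity, since powers of linear forms span $\Hdn$.

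A minor subtlety is that the condition $W_0\notin X_0\H_{k-1}$ is required only to make $G$ smooth. Any additional forms that would arise from $W_0\in X_0\H_{k-1}$ are irrelevant, because duality only uses smooth points, so restricting to $W_0\notin X_0\H_{k-1}$ is exactly right.
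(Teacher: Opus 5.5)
Your proposal is correct and follows essentially the same route as the paper. The paper likewise takes the tangent space $X_0^{d-k-1}W_0\,\H_1+X_0^{d-k}\,\H_k$ from Lemma~\ref{lem:tgtOk}, converts its two orthogonality conditions into $\nabla^k_{X_0}F=0$ and $\nabla^{k+1}_{X_0}F\llcorner W_0=0$ via the Bombieri--Weyl reproducing identities, and then reverses the argument. One small caveat, shared with the paper: for $k\ge d/2$, the condition $W_0\notin X_0\H_{k-1}$ does not by itself make $X_0^{d-k}W_0$ smooth (it can lie in $\cl D^k_d$), so equality of the two Zariski closures needs an extra remark. Over all admissible pairs the tangent-candidate space has constant dimension $\dim\Okdn$, so the incidence set is irreducible and the pairs giving smooth points are dense in it.
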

\begin{proof}
Let $F= X_0^{d-k} W_0 \in \smooth(\Okdn)$ and let $G \in T_F(\Okdn)^\perp \subset (\Okdn)^{\vee}$. By Lemma \ref{lem:tgtOk},
$T_F(\Okdn) = X_0^{d-k-1}W_0 \, \H_{1} + X_0^{d-k} \, \H_{k}.$
Thus $G$ satisfies the conditions
\begin{align}
    \scp{G,X_0^{d-k-1} W_0 X_j}_{B}\ =\ \frac{(d-k-1)!}{d!} \scp{\nabla_{X_0}^{k+1}(G), W_0 X_j}_{B} = \scp{\nabla_{X}^{k+1}(F) \llcorner W_0, X_j}_{B} =\ 0,
\end{align}
for $j=0, \ldots, n-1$, i.e. $\nabla_{X_0}^{k+1}(F)\llcorner W_0=0$, and 
\begin{align}
    \scp{G,X_0^{d-k} \dot{W}}_{B}= \frac{(d-k)!}{d!} \scp{\nabla_{X}^kF, \dot{W}}_{B} =\ 0.
\end{align}
for all $\dot{W}\in \H_{k}$, i.e. $\nabla^k_{X_0}F =0$. 

Vice versa, if $G\in \H_{d}$ is a form for which there exist $X_0\in \Hun$ and a degree $k$ form $W_0\in \H_{k}$ not divisible by $X_0$ so that $\nabla_{X_0}^k G=0$ and $\nabla_{X_0}^{k+1}G \llcorner W_0=0$,
then, reversing the above arguments, we see that $G$ lies in the space orthogonal to the tangent space of $\Okdn$ at a smooth point $F = X_0^{d-k} W_0\in \Okdn$.
%
\end{proof}

As a particular instance of this result, we recover the following classical facts.
\begin{proposition}\label{prop:dual_ver}
The projectively dual variety to $\Ver$ is the discriminant variety $\disc_{d}$.
The tangent space to $\disc_{d}$ at $F \in \smooth(\disc_{d})$ \BM{singular at $X_0^*\in \P{\cl{E}}$} is 
\begin{align}
T_F(\disc_{d}) =\disc_{d,X_0^*}^0 = \{ H\in \H_{d}\mid \ H(X_0^*)=0\}.
\end{align}
\end{proposition}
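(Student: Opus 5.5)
This is the case $k=0$ of Proposition \ref{prop:dual}, and the plan is to specialise that result and then read off the tangent space by biduality.

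\textbf{Step 1: the dual of $\Ver$.} For $k=0$ the condition $W_0\in\H_0\setminus X_0\cdot\H_{-1}$ only says $W_0$ is a nonzero constant. The condition $\nabla^0_{X_0}F=0$ is $F(X_0^*)=0$. The condition $\nabla^1_{X_0}F\llcorner W_0=0$ is $W_0\nabla_{X_0}F=0$, i.e. $\nabla_{X_0}F=0$; by Euler's identity this already forces $F(X_0^*)=0$. So $\cl C^0_d=\bigcup_{X_0}\disc_{d,X_0}=\disc_d$. Since $\disc_d$ is already Zariski closed (an irreducible hypersurface), $\Ver^\vee=\disc_d$.

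\textbf{Step 2: the tangent space.} By the reflexivity theorem (characteristic $0$), $\disc_d^\vee=\Ver$, and the conormal correspondence is symmetric. Hence for $F\in\smooth(\disc_d)$ the hyperplane $T_F(\disc_d)$ is $H^\perp$, where $H\in\Ver$ is a point whose tangent space $T_H(\Ver)$ is orthogonal to $F$. From the proof of Proposition \ref{prop:dual} with $k=0$, a form $F$ is orthogonal to $T_{X^d}(\Ver)=X^{d-1}\Hun$ exactly when $\nabla_X F=0$. By Proposition \ref{prop:sing_discr}, a smooth $F$ has a unique singular point $X_0^*$, so $H$ is proportional to $X_0^d$. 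Therefore
\[
T_F(\disc_d)=(X_0^d)^{\perp}=\{G\in\H_d:\ \scp{G,X_0^d}_B=0\}=\{G:\ G(X_0^*)=0\}=\disc^0_{d,X_0},
\]
using the identity $\scp{G,X_0^d}_B=G(X_0^*)$, which follows from \eqref{eq:Bomb2}.

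\textbf{Alternative without reflexivity.} One can compute the tangent space directly from a local parametrisation $\varphi_1$ near $(\bm 0,H_1=0,H_2,\dots)$. Differentiating in $\lb$ at a point with $H_1=0$ gives $\partial_{\l_j}F=(d-1)X_0^{d-2}X_jH_2+(\text{terms in }X_0^{d-3})$. The image of the differential then consists of all $G=R^1_{X_0}(G)$ together with the span of these $\lb$-derivatives. Rank $n-1$ of $H_2$, together with the dimension count $\dim=N-1$, shows this image equals $\disc^0_{d,X_0}$. Each of these derivatives vanishes at $X_0^*$, since every term contains a factor $X_j$ with $X_j\perp X_0$.

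\textbf{Main obstacle.} The delicate point is justifying that the tangent hyperplane at a smooth point corresponds to the unique singular point. I would rely on reflexivity together with the uniqueness statement in Proposition \ref{prop:sing_discr}.
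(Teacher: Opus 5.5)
Your main argument is correct. Step~1 is exactly what the paper does: specialise Proposition~\ref{prop:dual} to $k=0$.

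For the tangent space you take a genuinely different route.

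\textbf{The paper's route.} It computes $T_F(\Disc)$ directly from the local parametrisation $\varphi_2$ of $\Disc$ near $F$. The $H$-directions span $\sum_{i\ge 2}X_0^{d-i}\hat{\H}_i$. Modulo this space, the $\lb$-directions contribute only $-X_0^{d-1}\partial_j H_{0,2}$. The condition $\rank \nabla^{2,\perp}_{X_0}F=n-1$ (from Proposition~\ref{prop:sing_discr}) makes these fill in $X_0^{d-1}\hat{\H}_1$. So the image of $d\varphi_2$ is the hyperplane $\sum_{i\ge1}X_0^{d-i}\hat{\H}_i=(X_0^d)^\perp$, and biduality $\Disc^\vee=\Ver$ is then read off afterwards as a check.

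\textbf{Your route.} You assume reflexivity and read the tangent hyperplane off the conormal variety of $\Ver$. Since $\Ver$ is smooth, no closure issue arises there. This needs only uniqueness of the singular point, not the rank condition. In fact uniqueness also follows from reflexivity: the conormal fibre over a smooth point of a hypersurface is a single point, and every singular point $X$ of $F$ contributes $X^d$ to it.

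\textbf{Comparison.} Your argument is shorter and more conceptual, but it imports a nontrivial theorem. It would also make the paper's subsequent remark that biduality ``can easily be verified here'' circular. The paper's computation is elementary and self-contained, and it is the template reused for $T_F(\cl C_d)$ in Proposition~\ref{prop:dual_ver_tan}.

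\textbf{Your ``alternative without reflexivity'' is garbled and should be fixed or dropped.}
\begin{itemize}
\item The relevant parametrisation is $\varphi_2$, not $\varphi_1$. The image of $\varphi_1$ is not contained in $\Disc$. Its $H$-directions alone already span $\disc^0_{d,X_0}$, so the rank condition would play no role.
\item Your formula for $\partial_{\l_j}F$ has the wrong degree: $X_0^{d-2}X_jH_2$ has degree $d+1$.
\item The correct derivative at $\lb=0$ is $\sum_{i\ge2}\big((d-i)X_0^{d-i-1}X_jH_{0,i}-X_0^{d-i+1}\partial_jH_{0,i}\big)$. Its only part outside $\sum_{i\ge2}X_0^{d-i}\hat{\H}_i$ is $-X_0^{d-1}\partial_jH_{0,2}$.
\end{itemize}
With this correction, plus the rank-$(n-1)$ dimension count, it becomes the paper's proof.
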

 
\begin{proof}
The first part of the proposition is a corollary of Proposition \ref{prop:dual}, 
since $\Ver = \Odn^0$ and $\nabla_{X_0}^0 F= F(X_0^*)$, $\nabla_{X_0}^1 F \llcorner W_0 = 0$ for $W_0 \in \H_{0}^{\times}=\kk^\times$ implies $\nabla_{X_0}^1 F=0$.
This shows that 
$\cl C_{d}^0= \Disc=\Ver^\vee$.
 
Let $F \in \smooth(\Disc)$. By Proposition \ref{prop:sing_discr}, $F$ has a unique singular point $X_0^{*}\in \P{\cl{E}}$ (i.e. $F\not \in \cl M_{d}$), such that $\rank \nabla_{X_0}^{2,\perp}F=n-1$. 

The elements in $\Disc$ around $F$ are parametrized by $\varphi_2$ (defined in \eqref{eq:varphik}): 
$$
(\lb, H_2, \ldots, H_d) \in \kk^{n-1} \times \hat{\H}_2 \times \cdots \times \hat{\H}_d \mapsto \varphi_2(\lb, H_2, \ldots, H_d) \in \Disc.
$$
The image of the differential map at $(0,  H_{0,2}, \ldots, H_{0,d})$ with \KK{$H_{0,i} = \frac{1}{i!}\nabla_{X_0}^{i, \perp}F \in \hat{\H}_{i}$} is
\begin{align}\label{eq:imdphi2}
\lefteqn{\mathrm{Im}\, (d\varphi_2)_{|(0,H_{0,2}, \ldots, H_{0,d})}}\\ &=  
\vspan{ \sum_{i=2}^{d-1}  X_0^{d-i-1} 
(d-i) X_j H_{0,i} - X_0^{d-i+1} \partial_j H_{0,i}}_{j=1, \ldots, n-1} +\sum_{i=2}^{d} X_0^{d-i} \hat{\H}_i \\&
= X_0^{d-1} \vspan{\partial_j H_{0,2}}_{j=1, \ldots, n-1} + \sum_{i=2}^{d} X_0^{d-i} \hat{\H}_i.
\end{align}

If $\dim \vspan{\partial_1 H_{0,2}, \dots, \partial_{n-1} H_{0,2}} = \rank H_{0,2} = \rank \nabla_{X_0}^{2,\perp}F =n-1$, then $\vspan{\partial_1 H_{0,2}, \dots, \partial_{n-1} H_{0,2}} = \hat \H_{1}$ and 
$\mathrm{Im}\, (d\varphi_2)_{|(0,H_2, \ldots, H_{d})}$ is a hyperplane of $\Hdn$.
Since $F\not\in \cl M_{d}$ is not a double point of the parametrization $\varphi_2$, $\Disc$ is smooth at $F$
and
$$
T_{F}(\Disc)= \sum_{i=1}^{d} X_0^{d-i} \hat{\H}_i = \{ G\in \H_{d}\mid \ G(X_0^*)=0\} = \vspan{ X_0^{d}}^{\perp}.
$$
\end{proof}
Notice also, that the biduality theorem \cite[Thm. 1.1]{GKZ1994}, can easily be verified here since $T_{F}(\disc_{d})^{\perp} = (\vspan{ X_0^{d}}^{\perp})^{\perp} = \vspan{X_0^{d}}$, so that $\Disc^{\vee} = \Ver$.

Hereafter, we prove similarly that the dual to the tangential variety $\mathcal{T}_{d}=\Odn^1$ to $\Ver$ is the variety $\mathcal{C}_{d}$ that we encountered in Proposition \ref{prop:sing_discr}:

\begin{proposition}\label{prop:dual_ver_tan}
The projectively dual variety to $\mathcal{T}_{d}$ is the caustic component $\mathcal{C}_{d}$ of the singular locus of the discriminant (cf. Prop. \ref{prop:sing_discr}). 
If $F$ has a unique singularity $X_0^*\in \P{\cl{E}}$, which is of $A_2$-type (i.e. $\rank \nabla_{X_0}^{\perp,2}F = n-2$ and $\nabla_{X_0}^{\perp,3}F \llcorner W_0^3 \neq 0$ for some $W_0 \in \ann_1(\nabla_{X_0}^{\perp,2}F) \cap \H_{1}(X_0^{\perp})$), then $F\in \smooth(\cl C_{d})$.
The tangent space to $\cl C_{d}$ at $F$ is 
{\small\begin{align}
    T_F( \cl C_{d}) = & \left\{G\in \H_{d} \mid G(X_0^*)=0,\, \nabla_{X_0} (G) \llcorner W_0 =0 \ \textup{ for some } W_0\in \ann_1( \nabla^{2}_{X_0} F) \setminus{\kk\cdot \BM{X_0}} \right\}.
\end{align}}
\end{proposition}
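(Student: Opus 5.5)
The plan is to mirror the proof of Proposition \ref{prop:dual_ver}, in three steps: identify the dual via Proposition \ref{prop:dual} with $k=1$, show smoothness at $A_2$-points via a local parametrization, and read off the tangent space from the image of the differential.

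\textbf{Duality.} Apply Proposition \ref{prop:dual} with $k=1$. Then $\cl C^1_d$ consists of forms $F$ for which there are $X_0\in\Hun^\times$ and $W_0\in \H_1\setminus \kk X_0$ with $\nabla_{X_0}F=0$ and $\nabla^2_{X_0}F\llcorner W_0=0$. By Euler's identity, $X_0\in\ann_1(\nabla^2_{X_0}F)$ whenever $\nabla_{X_0}F=0$. Hence the existence of such a $W_0$ independent of $X_0$ says exactly that the induced Hankel operator $\overline{H^{1,1}_{\nabla^2_{X_0}F}}$ is degenerate, i.e. $\rank\nabla^2_{X_0}F\le n-2$. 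This is precisely the condition \eqref{eq:cusp}, so $\cl C^1_d=\cl C_d$. Since $\cl C_d$ is closed and irreducible by Proposition \ref{prop:sing_discr}, we get $\Tan^\vee=\cl C_d$.

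\textbf{Local parametrization.} For smoothness, I would parametrize $\cl C_d$ near $F$ in the spirit of $\varphi_2$. Choose a basis with $W_0=X_1$. Use coordinates $(\lb,\bm\mu,a,H_2',H_3,\dots,H_d)$, where:
\begin{itemize}
\item $\lb\in\kk^{n-1}$ moves the point $X_{\lb}$;
\item $\bm\mu\in\kk^{n-2}$ moves the kernel direction $W_{\bm\mu}=X_1+\sum_{j\ge2}\mu_jX_j$ inside $X_{\lb}^\perp$;
\item $H_2$ is a quadratic form on $X_{\lb}^\perp$ with $H_2\llcorner W_{\bm\mu}=0$, parametrized as a quadric in the remaining $n-2$ variables, after the linear change adapted to $W_{\bm\mu}$;
\item $H_3,\dots,H_d$ are arbitrary.
\end{itemize}
Form $\Phi=\sum_{i\ge2}X_{\lb}^{d-i}H_i$. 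A dimension count gives $\dim\Hdn-2$ parameters, matching Proposition \ref{prop:sing_discr}.

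\textbf{Differential and tangent space.} Compute $\mathrm{Im}\,d\Phi$ at $F$ as in \eqref{eq:imdphi2}:
\begin{itemize}
\item Varying $H_3,\dots,H_d$ gives $\sum_{i\ge3}X_0^{d-i}\hat\H_i$.
\item Varying $H_2$ in its allowed space gives $X_0^{d-2}\{Q\in\hat\H_2: Q\llcorner W_0=0\}$.
\item Varying $\bm\mu$ gives terms $X_0^{d-2}W_0\cdot(\text{linear forms in }X_2,\dots,X_{n-1})$, coming from the derivative of the constrained $H_2$. Together with the previous item, this covers all of $X_0^{d-2}\hat\H_2$ except the direction $X_0^{d-2}W_0^2$.
\item Varying $\lb$ gives $X_0^{d-1}\partial_jH_{0,2}$ plus $X_0^{d-2}\partial_jH_{0,3}$ corrections. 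The $\partial_jH_{0,2}$ span the $(n-2)$-dimensional image of the Hessian, i.e. $\ann$-orthogonal directions. The $W_0$-direction derivative contributes, modulo the already obtained span, the element $X_0^{d-2}\,\partial_{W_0}H_{0,3}$, whose $W_0^2$-coefficient is proportional to $\nabla^{\perp,3}_{X_0}F\llcorner W_0^3\neq0$.
\end{itemize}
Hence the image has codimension exactly $2$: it misses $X_0^d$ and $X_0^{d-1}W_0$. It therefore equals the orthogonal complement of $\vspan{X_0^d,X_0^{d-1}W_0}=T_{X_0^{d-1}W_0}(\Tan)$, using Lemma \ref{lem:tgtOk} and the Bombieri-Weyl formulas from the proof of Proposition \ref{prop:dual}. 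This orthogonal complement is exactly the set of $G$ with $G(X_0^*)=0$ and $\nabla_{X_0}G\llcorner W_0=0$, which is the claimed $T_F(\cl C_d)$.

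\textbf{Injectivity and the main obstacle.} Uniqueness of the singular point, together with uniqueness of the kernel direction (rank exactly $n-2$), shows that $F$ is not a double point of $\Phi$. The reasoning for this is as in Proposition \ref{prop:dual_ver}: a holomorphic immersion that is locally injective onto $\cl C_d$ near $F$ gives smoothness. The main obstacle is the third step, namely verifying that the $\lb$-derivative in the kernel direction genuinely supplies the missing $X_0^{d-2}W_0^2$ term. This is exactly where the $A_2$ condition enters, and it requires careful bookkeeping of the Taylor expansion under the change $X_{\lb}$, using the orthogonality in Appendix \ref{sec:taylor}.
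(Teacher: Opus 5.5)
Your proposal follows the paper's proof essentially step by step. Duality comes from Proposition \ref{prop:dual} with $k=1$. Then one computes the image of the differential of $\varphi_2$ with $H_2$ restricted to quadrics of rank $\le n-2$. There the kernel-direction derivative (with $W_0=X_1$) contributes $X_0^{d-2}\,\partial_1 H_{0,3}$, and the $A_2$ condition $H_{0,3}\llcorner X_1^3\neq 0$ fills the one missing direction $X_0^{d-2}X_1^2$. Your coordinates $(\bm\mu,H_2')$ are just an explicit chart on the paper's $\tilde\H_2$: your second and third bullets produce exactly its tangent space $T_2=\{H_2\llcorner X_1^2=0\}$.

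One correction: $\vspan{X_0^d,X_0^{d-1}W_0}$ is not $T_{X_0^{d-1}W_0}(\Tan)$. By Lemma \ref{lem:tgtOk} the latter is $X_0^{d-1}\H_1+X_0^{d-2}W_0\H_1$, of dimension $2n-1$. Its orthogonal complement is the much smaller space $\{G:\nabla_{X_0}G=0,\ \nabla^2_{X_0}G\llcorner W_0=0\}$. The span $\vspan{X_0^d,X_0^{d-1}W_0}$ is instead the contact line of $F$ on $\Tan$, and the orthogonal complement you actually need follows directly from \eqref{eq:eval} and \eqref{eq:eval3}. Also drop the undefined parameter $a$; otherwise your count is not $\dim\Hdn-2$.
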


\begin{proof}
The first part of the proposition is proved in Proposition \ref{prop:dual}, since $\Tan= \Odn^1$.

Let $F\in \cl C_{d}$ with a unique singularity $X_0^*\in \P{\cl{E}}$ (i.e. $F\not \in \cl M_{d})$ and such that $\rank \nabla_{X_0}^{\perp,2}F = n-2$,  
$\nabla_{X_0}^{\perp,2}F \llcorner X_1=0$ and $\nabla_{X_0}^{\perp,3}F \llcorner X_1^3 \neq 0$, where $X_0, X_1, \ldots, X_{n-1}$ is a basis of $\Hun$.
Then it is of the form $F = \sum_{i=2}^{d} X_0^{d-i} H_{0,i}$ where $H_{0,i} = \nabla^{\perp,i}_{X_0}F \in \hat \H_i$ and $H_{0,2} \in \H_2 (X_2, \ldots, X_{n-1})$ since $\nabla_{X_0}^{\perp,2}F \llcorner X_1=0$.

We consider the map $\tilde \varphi_2$ induced by $\varphi_2$ (defined in \eqref{eq:varphik}): 
$$
\tilde \varphi_2: (\lb, H_2, \ldots, H_d) \in \kk^{n-1} \times {\tilde{\H}_2} \times \hat \H_3 \cdots \times \hat{\H}_d \mapsto \varphi_2(\lb, H_2, H_3 \ldots, H_d) \in \cl C_{d},
$$
where $\tilde{{\H}_2} =\set{H_2 \in \hat \H_2\mid \rank H_2 \le n-2}$.
By Proposition \ref{prop:sing_discr}, $\tilde \varphi_2$ parametrizes $\cl C_{d}$.

By differentiating the equations $H_2\llcorner W =0$ defining the incidence variety in $\hat \H_2 \times \hat \H_{1}$ above $\tilde \H_2$, we verify that the tangent space of $\tilde \H_2$ at $H_{0,2} \in \H_2(X_2, \ldots, X_{n-1})$ is  
$$
T_2 := T_{H_{0,2}}(\tilde \H_2) = \set{ H_2 \in \hat \H_2 \mid H_2 \llcorner X_1 \in \vspan{X_2, \ldots, X_{n-1}}} = 
\set{ H_2 \in \hat \H_2 \mid H_2 \llcorner X_1^2 = 0}.
$$
The image of $d\tilde \varphi_2$ at $F$, computed as in \eqref{eq:imdphi2}, is
\begin{align}
\lefteqn{\mathrm{Im}\, (d\tilde\varphi_2)_{|(0,H_{0,2}, \ldots, H_{0,d})}} \\
& =  
\vspan{\sum_{i=2}^{d-1}  (d-i) X_0^{d-i-1} 
 X_j H_{0,i} - X_0^{d-i+1} \partial_j H_{0,i}}_{j=1, \ldots, n-1} + X_0^{d-2} T_2 
 + \sum_{i=3}^{d} X_0^{d-i} \hat{\H}_i
\\ & = X_0^{d-2} 
\big( \vspan{ X_0\, \partial_j H_{0,2} 
+  \partial_j H_{0,3}}_{j=1, \ldots, n-1}
+ T_2\big)
+ \sum_{i=3}^{d} X_0^{d-i} \hat{\H}_i
\\ &
= X_0^{d-2} 
\big( \vspan{ X_0\, \partial_j H_{0,2} 
+ \partial_j H_{0,3}}_{j=2, \ldots, n-1}
+ \vspan{\partial_1 H_{0,3}}
+ T_2\big)
+ \sum_{i=3}^{d} X_0^{d-i} \hat{\H}_i
\end{align}

As $H_{0,3} \llcorner X_1^3 \neq 0$, $\vspan{\partial_1 H_{0,3}}+ T_2 = \hat \H_2$. We deduce that 
\begin{align}
{\mathrm{Im}\, (d\tilde\varphi_2)_{|(0,H_{0,2}, \ldots, H_{0,d})}} 
= X_0^{d-1} \vspan{ \partial_j H_{0,2}}_{j=2, \ldots, n-1}
+ X_0^{d-2} \hat \H_2
+ \sum_{i=3}^{d} X_0^{d-i} \hat{\H}_i,
\end{align}
which is of dimension $\dim \Hdn - 2$, since $\rank H_{0,2} = \rank \nabla^{\perp,2}_{X_0} F = n-2$. Therefore, ${\mathrm{Im}\, (d\tilde\varphi_2)_{|(0,H_{0,2}, \ldots, H_{0,d})}}$ is the tangent space $T_{F}(\cl C_{d})$ of $\cl C_{d}$ at $F$. 
We deduce that $G\in T_{F}(\cl C_{d})$, if and only if, $\nabla^{\perp,0}_{X_0}G= G(X_0^*) = 0$ and $\nabla^{\perp,1}_{X_0}G \in \vspan{X_2, \ldots, X_{n-1}}$, i.e. $\nabla^{\perp,1}_{X_0}G \llcorner X_1 = 0$, which proves the proposition.
\end{proof}

\begin{remark}
Similarly, one can show that the dual of the Maxwell component $\cl M_d$ of $\sing(\Disc)$ (see Proposition \ref{prop:sing_discr}) is the $2$-secant $\sigma_2(\Ver)$ of $\Ver$.  
\end{remark}

\begin{example}\label{ex:formeA_2}
    For $d\geq 3$ let us define $F=X_0^{d-2}(X_2^2+\dots+X_{n-1}^2)+X_0^{d-3} X_1^3+\sum_{i=1}^{n-1} a_i X_i^d$. For generic real parameters $a_1,\dots, a_{n-1}\in \mathbb{R}$, the form $F$ has a unique singular point $X^*=[1:0:\dots:0]$ which is of type $A_2$ and so $F$ belongs to the smooth locus of $\mathcal{C}_{d}$.
To see this, observe first that $F$ does not have singular points in the hyperplane $X_0=0$. In the affine chart $X_0=1$, singular points \BM{$Y^* = [1:y_1:\cdots: y_{n-1}]$} of $F$ satisfy
\begin{align}\label{eq:sing_equations}
&(d-2)(y_2^2+\dots+y_{n-1}^2) + (d-3) y_1^3=0,\quad y_1^2(3+ d a_1 y_1^{d-3})=0,\\ &y_i(2+d a_i y_i^{d-2})=0,\quad i=2,\dots, n-1. 
\end{align}
For generic choice of $a_1,\dots, a_{n-1}\in \R$, this system of equations has only the origin as a solution.
For example, we can take $a_1=\frac{3}{d}(d-3)^{\frac{d-3}{3}}$ and $a_i=\frac{2}{d}\left(\frac{d-2}{3^{\,i-1}}\right)^{\frac{d-2}{2}}$,
$i=2,\dots,n-1$.
We have $(d-3)\vert y_ 1^3\vert=1$ if $y_ 1\neq 0$ and $(d-2)\vert y_ i^{2}\vert =3^{i-1}$ whenever $y_ i\neq 0$, $i=2,\dots, n-1$.
The \BM{left-hand side of the first equation} in \eqref{eq:sing_equations} is then the sum of complex numbers whose absolute values are powers of $3$ among $3^{i-1}$, $i=1,2,\dots,n-1$.
Let $3^k$, $1\leq k\leq n-2$, be the largest absolute value appearing.
Then 
\begin{align}
    \vert(d-2)(y_ 2^2+\dots+y_ {n-1}^2) + (d-3) y_ 1^3\vert\ \geq\ 3^k-\sum_{i=1}^k 3^{i-1}\ =\ 3^k-\frac{3^k-1}{2}\ >\ 0
\end{align}
and so $y_ 1,y_ 2,\dots, y_ {n-1}$ must all be zero.
\end{example}

The condition for $\x\in \P^{n-1}_{\kk}$ to be a cuspidal singularity means that the Hessian $\nabla_{\x}^2 F$ of $F$ at $\x$ does not have maximal possible rank $n-1$. This condition is equivalent to $H^{1,1}_{\nabla_{\x}^2 F}(G_1)=0$ for some $G_1\in \H_{1}$ non-proportional to $(\x, \cdot)$ and also to $\det \overline{H^{1,1}_{\nabla_{\x}^2 F}} =0$, where $\overline{H^{1,1}_{\nabla_{\x}^2 F}}$ is defined in \eqref{eq:induced_Hankel1}.
For higher $k\geq 1$ this generalizes as follows.

\begin{remark}
    Let us carefully analyze the condition $G\in \ker H^{1,k}_{\nabla^{k+1}_{X}F}$ with $G\in \H_{k}$ being not divisible by $X \in \Hun$.
First, because of $\nabla_{X}^k F=0$, any form in $X\, \H_{k-1}$ lies in the kernel of $H^{1,k}_{\nabla_{X}^{k+1} F}$ by \eqref{eq:eval} and \eqref{eq:euler}. Also, \eqref{eq:euler} implies that the image of $H^{1,k}_{\nabla^{k+1}_{X} F}$ lies in the space $X^{\perp} \subseteq (\H_{1})^*$ that is canonically identified with the dual space to $\faktor{\H_{1}}{\kk(X  ,\cdot)}$. Therefore, \emph{the induced Hankel operator} 
\begin{align}\label{eq:induced_Hankel2}
    \overline{H^{1,k}_{\nabla_{X}^{k+1} F}}: \faktor{\H_{k}}{\BM{X} \,\H_{k-1}}\ &\rightarrow\ \left(\faktor{\H_{1}}{\kk\, X}\right)^*,\\
    \overline{G}\,=\,G+X\,\H_{k-1}\ &\mapsto\ H^{1,k}_{\nabla^{k+1}_{X} F}(G)
\end{align}
is well-defined.
The above condition then becomes $\ker \overline{H^{1,k}_{\nabla^{k+1}_{X}F}}\neq \{\boldsymbol{0}\}$.
\end{remark}

\begin{remark}
[Parametrizations and the \BM{orthogonal group} $O(n)$ action]
 The action of $O(n)$ on $\kk^n$ induces an action on $\H_{d}$ by 
$\gb \cdot F (X )=  F ( \gb \cdot X )$,  that preserves the Bombieri-Weyl form. The discriminant and the osculating varieties are preserved by this action but the action is not transitive.  The trace of this action appears in the formulas  \eqref{eq:varphik}, \eqref{eq:psik} and in the formula \eqref{eq:param-oscul2} and play an important role in the computation of tangent spaces, as in  Lemma \ref{lem:tgtOk} and Proposition \ref{prop:dual_ver_tan}. 

For instance, consider the osculating variety parametrized locally  by 
 \eqref{eq:param-oscul2} and let 
$F= X_0^{d-k} G \in \cl{O}^k_{d}$, where 
$H= \sum_{i=0}^k X_0^{k-i} G_i$ with $G_i \in \hat  \H_i$.  The derivatives of $\Psi_k$ with respect to the 
variables $H_i$ show that 
\begin{align}
T_{F} \cl {O}^k_{d} \supset X_0^{d-k} \, \H_{k}.  
\end{align}
If we denote by $Orb(F)$  the $O(n)$ orbit of $F$, then for $F= X_0^{d-k} (\sum_{i=0}^k X_0^{k-i} G_i)$ 
\begin{align}\label{eq:tangent_orbit1}
T_{F} Orb(F) =  X_0^{d-k-1}G_k \, \hat \H_{1} \oplus  (T_{F} Orb(F) \cap X_0^{d-k} \, \H_{k}) .
\end{align}
 Hence 
\begin{align}\label{eq:tangentO_d}
T_{F} \cl {O}^k_{d} =T_{F} Orb(F) + X_0^{d-k} \, \H_{k}  =  X_0^{d-k-1} G_k \, \hat \H_{1} \oplus  X_0^{d-k} \, \H_{k} .
\end{align}

 Similarly, if $F= \sum_{i=k+1}^{d} X_0 ^{d-i} G_i \in \disc^{k}_{d} $, then 
\begin{align}\label{eq:tangent_orbit2}
T_{F} Orb(F)   = X_0^{d-k} \vspan{\partial_1 G_{k+1}, \dots, \partial_{n-1} G_{k+1}} 
\oplus (T_{F} Orb(F) \cap \sum_{i=k+1}^{d} X_0 ^{d-i}  \, \hat \H_{i} ). 
\end{align}
  Therefore 
\begin{align}
T_{F} \disc^{k}_{d}  =  T_{F} Orb(F) +\sum_{i=k+1}^{d} X_0 ^{d-i}  \, \hat \H_{i} 
 = X_0^{d-k} \vspan{\partial_1 G_{k+1}, \dots, \partial_{n-1} G_{k+1}}  \oplus  
\sum_{i=k+1}^{d} X_0 ^{d-i}  \, \hat \H_{i}   .
\end{align}  
\end{remark}

\section{Distance to discriminants and osculating varieties}\label{sec:disc}

To analyze the distance to discriminant and  osculating varieties, we define the function
\begin{align}\label{eq:delta}
\delta_F^{k}(X)\ :=\ \sum_{i=0}^{k}  { {d \choose i}^{-1} \big(i!\big)^{-2} } 
 \norm{\nabla_{X}^{\perp,i}F}^2_B,\quad X\in \Sn^\R.
\end{align}
In particular, we have $\delta_F^0(X) = F(X^*)^2$ and 
\begin{equation}\label{eq:delta1 value} 
\delta_F^1(X) 
= F(X^*)^2 + \frac{1}{d} \norm{\nabla^{\perp}_{X}F}^2 
= (1-d) F(X^*)^2 + \frac{1}{d} \norm{\nabla_{X} F}^2,
\end{equation}
(see Lemma \ref{lem:norm_Delta_K} and Lemma  \ref{lem:T1}), which coincides with the distance function studied by Raffalli \cite{Raffalli2014}.

\begin{lemma}\label{lem:delta1 crit}
For any $F\in \Hdn$, $X\in \Hun$ is a critical point of $\delta_F^1$ on $\Sn$, if and only if,
$$
\nabla^{\perp,2}_{X}F \llcorner \nabla^{\perp}_{X} F = 0.
$$
\end{lemma}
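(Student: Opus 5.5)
We will compute the differential of $\delta_F^1$ directly from the expression $\delta_F^1(X)=(1-d)F(X^*)^2+\frac1d\norm{\nabla_X F}^2$ in \eqref{eq:delta1 value}, and then impose the Lagrange condition on $\Sn$. The constraint is $\scp{X,X}=1$, so $X$ is critical if and only if the gradient of $\delta^1_F$ (regarded as a polynomial function on $\Hun$) is proportional to $X$. Equivalently, its component in $X^{\perp}$ must vanish.

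The first step is to differentiate the two terms. Write $\dot X\in\Hun$ for a tangent direction. Then $\frac{d}{dt}F((X+t\dot X)^*)=\scp{\nabla_X F,\dot X}$. Likewise, $\frac{d}{dt}\nabla_{X+t\dot X}F=\nabla^2_X F\llcorner \dot X$ up to the normalisation of $\llcorner$; by Euler's identity this equals $\frac{1}{d-1}$ times the Hessian applied to $\dot X$. The result is
\[
d\delta^1_F(X)[\dot X]=2(1-d)F(X^*)\scp{\nabla_XF,\dot X}+\tfrac{2}{d}\scp{\nabla_XF,\nabla^2_XF\,\dot X},
\]
where $\nabla^2_XF\,\dot X$ denotes the Hessian matrix applied to $\dot X$. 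The Hessian is symmetric, so the gradient of $\delta^1_F$ at $X$ is
\[
2(1-d)F(X^*)\nabla_XF+\tfrac{2}{d}\nabla^2_XF\,\nabla_XF .
\]

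The second step is to decompose along $X\oplus X^\perp$ using the homogeneous Taylor expansion at $X$, with $\scp{X,X}=1$. We have $\nabla_XF=dF(X^*)X+\nabla^\perp_XF$. The Hessian decomposes through Euler's identity: $\nabla^2_XF\,X=(d-1)\nabla_XF$, and its restriction to $X^\perp$ is $\nabla^{\perp,2}_XF$. Substituting, the $X^\perp$-component of the gradient becomes
\[
2(1-d)F(X^*)\nabla^\perp_XF+\tfrac2d\big(dF(X^*)(d-1)\nabla^\perp_XF+\nabla^{\perp,2}_XF\,\nabla^\perp_XF\big).
\]
The terms proportional to $F(X^*)\nabla^\perp_XF$ cancel exactly, leaving $\frac{2}{d}\nabla^{\perp,2}_XF\,\nabla^{\perp}_XF$. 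We will also need that the mixed block, i.e. the $X^\perp$-part of $\nabla^2_XF\,X$, equals $(d-1)\nabla^\perp_XF$; this is where the Taylor decomposition is used. Finally, $\nabla^{\perp,2}_XF\,\nabla^\perp_XF$ is a nonzero multiple of the coproduct $\nabla^{\perp,2}_XF\llcorner\nabla^\perp_XF$ by \eqref{eq:co-product}, and it lies in $X^\perp$. So the Lagrange condition is equivalent to the stated equation.

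The main obstacle is bookkeeping: keeping the normalisations consistent. These include the factors $\binom{k}{\alpha}$ in \eqref{eq:nabla}, the relation between $\nabla^{\perp,2}_X F$ and the full Hessian, and the constant in $\llcorner$, all of which must make the $F(X^*)$ terms cancel exactly. To check this we would first verify the decomposition of $\nabla^2_X F$ with respect to $X\oplus X^\perp$ using the Appendix lemmas, in particular Lemma \ref{lem:T1}. Over $\C$, $\scp{\cdot,\cdot}$ is bilinear but not Hermitian; the argument is purely algebraic, so "critical" is understood as vanishing of the differential restricted to $T_X\Sn=X^\perp$.
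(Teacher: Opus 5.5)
Your proposal is correct and is essentially the paper's proof. Like the paper, you differentiate $\delta^1_F=(1-d)F(X^*)^2+\frac1d\norm{\nabla_XF}^2$, split $\nabla_XF=dF(X^*)X+\nabla^\perp_XF$, use Euler's identity $\nabla^2_XF\llcorner X=(d-1)\nabla_XF$, and observe that the $F(X^*)\nabla^\perp_XF$ terms cancel in the $X^\perp$-component, leaving a nonzero multiple of $\nabla^{\perp,2}_XF\llcorner\nabla^\perp_XF$.

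One harmless slip: with the paper's normalisation, $\nabla^2_XF\llcorner\dot X$ is exactly the Hessian applied to $\dot X$, with no factor $\frac{1}{d-1}$, since Euler's identity only concerns $\dot X=X$. Your displayed differential already uses the correct expression, so nothing downstream is affected.
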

\begin{proof}
Let $X$ be a critical point of $\delta^1_F$ on $\Sn$ with $X, X_1, \ldots, X_{n-1}$ an orthogonal basis of $\Hun$. Then by  \eqref{eq:delta1 value}, we have 
\begin{equation}\label{eq:discd1}
\nabla_X(\delta_F^1) = (1-d) F(X^*)\nabla_{X}F + \frac{1} {d} \, \nabla_{X}^2 F \llcorner \nabla_{X} F = \lambda\, X.
\end{equation}
By \eqref{eq:nabla perp}, $\nabla_{X}F =  d \,F(X^*)\, X + \nabla^\perp_{X} F$ with $\nabla^{\perp}_{X}F \in  \vspan{X}^{\perp}=\vspan{X_1, \ldots,  X_{n-1}}$,
and by Euler identity, $\nabla_{X}^2 F \llcorner X = (d-1) \nabla_X F$. 
Using \eqref{eq:nabla2 dec}, we deduce that
\begin{align}
    \nabla_{X}^2 F \llcorner \nabla_{X} F & = d (d-1) F(X^*) \nabla^2_X F \llcorner X + \nabla^2_X F \llcorner \nabla^\perp_X F\\
    &  = d (d-1) F(X^*) \nabla_X F  
    +2 X (\nabla_{X} (D_X F) \llcorner \nabla^\perp_X F )
    + \nabla^{\perp,2}_X F \llcorner \nabla^\perp_X F
\end{align}
By projection of \eqref{eq:discd1} on $X^{\perp}= \vspan{X_1, \ldots, X_{n-1}}$, we get 
\begin{align}
    0 & = (1-d) F(X^*)\nabla_{X}^{\perp} F + \frac{1}{d} \big ( (d-1) \, d \, F(X^*)\nabla_{X}^{\perp} F + \nabla^{\perp, 2}_{X}F \llcorner \nabla^{\perp}_{X} F \big)\\
    &= \nabla^{\perp,2}_{X}F \llcorner \nabla^{\perp}_{X} F.
\end{align}
\end{proof}


We consider first the distance to the discriminants of order $k$.
\begin{proposition}\label{prop:distdisc} 
For any real form $F\in \Hdn$ we have 
\begin{align}
\dist_B(F, \disc^{k,\R}_{d})^2\ =\ \min_{X\in \Sn^{\R}} \delta^k_{F}(X)
\end{align}
and a closest point to $F$ on $\disc^{k,\R}_{d}$ is $R^{k+1}_{X_{\min}}(f)$ where $X_{\min}$ is a minimizer of $\delta^k_{F}$ on $\Sn^\R$.
\end{proposition}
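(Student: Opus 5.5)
The proof reduces the distance to the union $\disc^{k,\R}_{d}=\bigcup_{X}\disc^{k}_{d,X}$ to a family of distances to linear subspaces, and then uses the orthogonality of the homogeneous Taylor expansion. Since $\disc^{k}_{d,X}$ depends only on the line $\R X$, we may restrict to $X\in\Sn^\R$. For such $X$, the set $\disc^{k}_{d,X}$ is a linear subspace of $\Hdn$, so
\[
\dist_B(F,\disc^{k,\R}_{d})^2=\inf_{X\in\Sn^\R}\dist_B(F,\disc^{k}_{d,X})^2 .
\]
It then suffices to identify the orthogonal projection of $F$ onto each $\disc^{k}_{d,X}$ and compute the corresponding residual.

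Fix $X\in\Sn^\R$ and complete it to an orthonormal basis $X,X_1,\dots,X_{n-1}$ of $\Hun$. Write the homogeneous Taylor expansion $F=T^k_X(F)+R^{k+1}_X(F)$, where
\[
T^k_X(F)=\sum_{i\le k}\tfrac{1}{i!}X^{d-i}\nabla^{\perp,i}_XF \quad\text{and}\quad R^{k+1}_X(F)=\sum_{i>k}\tfrac{1}{i!}X^{d-i}\nabla^{\perp,i}_XF .
\]
I will check two things.
\begin{enumerate}
\item $\disc^{k}_{d,X}=\bigoplus_{i>k}X^{d-i}\H_i(X^\perp)$ and $\bigoplus_{i\le k}X^{d-i}\H_i(X^\perp)$ are mutually $B$-orthogonal and together span $\Hdn$.
\item $R^{k+1}_X(F)\in\disc^{k}_{d,X}$.
\end{enumerate}
The first statement follows from the monomial description of \eqref{eq:Bomb}, since in the basis $X,X_1,\dots$ the monomials with distinct $X$-exponents are orthogonal. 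The Bombieri-Weyl product is $O(n)$-invariant, so one can assume $X=X_0=\bm e^1$. The vanishing of all derivatives of order $\le k$ at $X^*$ characterizes exactly the forms whose $X$-degree is at most $d-k-1$, which is the remark after \eqref{eq:k_discr} together with the stated orthogonality $\scp{T^k_X(F),R^{k+1}_X(F)}_B=0$. Hence $R^{k+1}_X(F)$ is the orthogonal projection of $F$ onto $\disc^{k}_{d,X}$, and
\[
\dist_B(F,\disc^{k}_{d,X})^2=\|T^k_X(F)\|_B^2 .
\]

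Next, I compute $\|T^k_X(F)\|_B^2$ using orthogonality of the summands:
\[
\|T^k_X(F)\|_B^2=\sum_{i\le k}(i!)^{-2}\,\|X^{d-i}\nabla^{\perp,i}_XF\|_B^2 .
\]
I then use the identity $\|X^{d-i}P\|_B^2={d\choose i}^{-1}\|P\|_B^2$ for $P\in\H_i(X^\perp)$ and $\|X\|=1$. This is the content of Lemma \ref{lem:norm_Delta_K} in the appendix; it can also be checked directly on monomials via the multinomial weights, since ${d\choose (d-i,\beta)}^{-1}={d\choose i}^{-1}{i\choose\beta}^{-1}$. Together these give exactly $\delta^k_F(X)$. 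This multinomial bookkeeping is the only computational point.

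Finally, $\delta^k_F$ is continuous on the compact sphere $\Sn^\R$, so the infimum is attained at some $X_{\min}$. Then $R^{k+1}_{X_{\min}}(F)\in\disc^{k,\R}_{d}$ realizes the distance, because
\[
\|F-R^{k+1}_{X_{\min}}(F)\|_B^2=\|T^k_{X_{\min}}(F)\|_B^2=\delta^k_F(X_{\min}) .
\]
Conversely, any $G\in\disc^{k,\R}_{d}$ lies in some $\disc^{k}_{d,X}$ with $X\in\Sn^\R$, so $\|F-G\|_B^2\ge\delta^k_F(X)\ge\delta^k_F(X_{\min})$. This proves both the formula for the distance and the claim about the closest point.
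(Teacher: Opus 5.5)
Your proof is correct and follows essentially the same route as the paper. The paper also uses the $B$-orthogonal splitting $F=T^k_X(F)+R^{k+1}_X(F)$, together with $T^k_X(G)=0$ and $R^{k+1}_X(G)=G$ for $G\in\disc^k_{d,X^*}$, to get $\|F-G\|_B^2=\|T^k_X(F)\|_B^2+\|R^{k+1}_X(F)-G\|_B^2$, and then identifies $\|T^k_X(F)\|_B^2=\delta^k_F(X)$ via Lemma \ref{lem:norm_Delta_K}; your monomial description of $\disc^k_{d,X}$ and the compactness argument for attaining the minimum only make explicit what the paper leaves implicit.
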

\begin{proof}
For $X \in \Sn^{\R}$, $G \in \disc^k_{d,X^*}$, we have 
$$
\norm{F-G}_B^2= \norm{T^{k}_{X}(F-G)}_B^2 + \norm{R^{k+1}_{X}(F-G)}_B^2= \norm{T^{k}_{X}(F)}_B^2 + \norm{R^{k+1}_{X}(F)-G}_B^2,
$$
since $R^{k+1}_{X}(G)=G$ and $T^{k}_{X}(G)=0$.
We deduce that
\begin{align*}
\dist_B(F, \disc^k_{d, X^*})^2 &= \norm{T^{k}_{X}(F)}_B^2 = \delta^k_F(X),
\quad \dist_B(F, \disc^{k,\R}_{d})^2 = \min_{X \in \Sn^\R} \delta^k_{F}(X),
\end{align*}
and a closest point to $F$ on $\disc^{k,\R}_{d}$ is $R^{k+1}_{X_{\min}}(F)$, where $X_{\min}$ is a minimizer of $\delta^k_{F}$ on $\Sn^\R$.
\end{proof}
A particular case concerns the distance to the discriminant hypersurface $\disc^\R_d=\disc_d^{1,\R}$. Equation 
\eqref{eq:delta1 value} shows that 
for any real form $F\in \H_{d}$, 
{\small\begin{equation}\label{eq:distdisc1}
\dist_B(F, \disc_{d}^\R)^2 
    = \min_{X \in \Sn^\R} \left(F(X^*)^2 + \frac{1}{d} \norm{\nabla^{\perp}_{X}F}^2\right) 
    = \min_{X \in \Sn^\R} \left((1-d) F(X^*)^2 + \frac{1}{d} \norm{\nabla_{X} F}^2\right).
\end{equation}}
Hence computing  the distance to $\disc^{\R}_{d}$ reduces to solving the  polynomial minimization problem \eqref{eq:distdisc1}. 
We illustrate it in Example \ref{ex:binary cubic}.

Next, we consider the distance to the osculating varieties.
\begin{proposition}\label{prop:dist_osc} For $F\in \H_{d}$, 
\begin{align}
\dist(F,\cl O^{k}_{d})^2 = \norm{F}_B^2 - \max_{X\in \Sn^\R} \delta^k_F(X)
\end{align}
and a closest point to $F$ on $\Ok_{d}$ is $T^k_{X_{\max}}(F)$, where $X_{\max}\ \in \Sn^\R$ maximizes $\delta^k_{F}$ on $\Sn$.
\end{proposition}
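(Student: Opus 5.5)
The argument runs parallel to the proof of Proposition \ref{prop:distdisc}, with the roles of $T^k_X$ and $R^{k+1}_X$ exchanged. We work over $\R$ and rely on two earlier facts. First, for $X\in\Sn^\R$ the homogeneous Taylor expansion splits $F=T^k_X(F)+R^{k+1}_X(F)$ into Bombieri-Weyl orthogonal pieces. Second, $\norm{T^k_X(F)}_B^2=\delta^k_F(X)$, which is Lemma \ref{lem:norm_Delta_K}, already used in the proof of Proposition \ref{prop:distdisc}. The fibre $\cl O^k_{d,X}=X^{d-k}\H_k$ is a linear subspace, and the real osculating variety is the union of these fibres over $X\in\Sn^\R$, since rescaling $X$ does not change the fibre. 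So it suffices to compute the distance from $F$ to each fibre and then optimise over $X$.

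The key step is to show that, for fixed unit $X$, the subspace $X^{d-k}\H_k$ equals the space of all forms $G$ with $R^{k+1}_X(G)=0$. Writing $W=\sum_{i=0}^k X^{k-i}W_i$ with $W_i\in\H_i(X^\perp)$, which is the Taylor decomposition of $W$ in degree $k$, gives
\[
X^{d-k}W=\sum_{i=0}^k X^{d-i}W_i .
\]
By uniqueness of the homogeneous Taylor expansion, this is exactly the general form of a polynomial whose Taylor terms of order $>k$ vanish. Hence $X^{d-k}\H_k=\operatorname{Im}T^k_X$, and it is the orthogonal complement of $\operatorname{Im}R^{k+1}_X$. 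The orthogonality of the two summands was stated in Section \ref{sub:Bomb} and proved in the appendix.

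It follows that $T^k_X(F)$ is the orthogonal projection of $F$ onto $\cl O^k_{d,X}$. Therefore
\[
\dist_B(F,\cl O^k_{d,X})^2=\norm{R^{k+1}_X(F)}_B^2=\norm{F}_B^2-\norm{T^k_X(F)}_B^2=\norm{F}_B^2-\delta^k_F(X),
\]
where the middle equality is Pythagoras and the last uses Lemma \ref{lem:norm_Delta_K}. Minimising over $X\in\Sn^\R$ gives the stated formula. The minimum is attained because $\delta^k_F$ is continuous on the compact sphere, and the closest point is $T^k_{X_{\max}}(F)$.

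The step most likely to need care is the identification $X^{d-k}\H_k=\operatorname{Im}T^k_X$. It requires that the factor $X^{d-i}$ combines correctly with the decomposition of $W$, and that the normalisations $\frac1{i!}\nabla^{\perp,i}_X$ match. This is only bookkeeping once uniqueness of the expansion is invoked.
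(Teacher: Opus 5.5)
Your proposal is correct and is essentially the paper's proof. The paper likewise takes $X\in\Sn^\R$ and $G\in\cl O^k_{d,X}$, uses $T^k_X(G)=G$, $R^{k+1}_X(G)=0$ and the orthogonality of $T^k_X(F)$ and $R^{k+1}_X(F)$ to get $\norm{F-G}_B^2=\norm{T^k_X(F)-G}_B^2+\norm{R^{k+1}_X(F)}_B^2$, deduces $\dist_B(F,\cl O^k_{d,X})^2=\norm{F}_B^2-\delta^k_F(X)$ from Lemma~\ref{lem:norm_Delta_K}, and then optimises over $X\in\Sn^\R$; the only difference is that you spell out the identification $X^{d-k}\H_k=\operatorname{Im}T^k_X$ and the compactness of the sphere, which the paper uses implicitly.
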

\begin{proof}
For $X\in \Sn^\R$ and $G \in \cl O^{k}_{d,X}$,
\begin{align}
\norm{F-G}_B^2= \norm{T^{k}_{X}(F-G)}_B^2 + \norm{R^{k+1}_{X}(F-G)}_B^2= \norm{T^{k}_{X}(F)-G}_B^2 + \norm{R^{k+1}_{X}(F)}_B^2,
\end{align}
since $R^{k+1}_{X}(G)=0$, $T^{k}_{X}(G)=G$ and for any $H\in \H_{d}, \scp{T^{k}_{X}(H), R^{k+1}_{X}(H)}_B=0$.
We deduce by Lemma \ref{lem:norm_Delta_K} that 
\begin{align*}
\dist_B(F, \Ok_{d,X})^2 &= \norm{R^{k+1}_{X}(F)}_B^2 =  \norm{F}_B^2 -\delta^k_F(X).
\end{align*}
Consequently,
$\dist_B(F, \Ok_{d})^2= \norm{F}_B^2 - \max_{X\in \Sn^\R} \delta^k_F(X)$.

We verify that $G= T^{k}_{X_{\max}}(F)$, for $X_{\max}$ a maximizer of $\delta^k_F$ on $\Sn^\R$, is a closest point to $F$ on $\Ok_{d}$, since $G\in \Ok_{d}$, and
$$
\norm{F-G}_B^2 = \norm{R^{k+1}_{X_{\max}}(F)}_B^2 = \norm{F}_B^2 - \delta^k_F(X_{\max})= \dist(F, \Ok_{d}).
$$\end{proof}
\begin{remark}
As in Proposition \ref{prop:distdisc}, computing the distance to $\Ok_{d}$  
 reduces to solving the (polynomial) \emph{maximization} problem on $\Sn^\R$ with the objective function $\delta^k_F(X)$.  The minimal distance is reached at a point
$T^k_{X_{\max}}(F)$, where $X\in \Sn^\R$ is a \emph{maximizer} of $\delta^k_F$.
For $k=0$ we have $\delta_F^0(X) = F(X^*)^2$ and so the distance minimization to the Veronese variety $\Ver=\Odn^0$ is equivalent to maximization of $F(X^*)$ over the sphere. This is well-known in the context of best rank-one approximations of symmetric tensors, see \cite[Thm. 2.19]{QiLuo}.
\end{remark}

We can notice, that for any $F\in \H_{d}$, 
\begin{align}
 F = T_{X}^k(F) + R_{X}^{k+1}(F),
\end{align}
with $R_{X}^{k+1}(F) \in \disc_{d,X^*}^{k}$.
Since $\scp{T^{k}_{X}(F), R^{k+1}_{X}(F)}_B=0$, 
$R_{X}^{k+1}(F)$ is the orthogonal projection (with respect to the product \eqref{eq:Bomb}) of $F$ on the linear space $\disc_{d}^{k}(X)$ and
$\dist(F, \disc_{d,X^*}^{k})^2= \norm{F - R_{X}^{k+1}(F)}_B^2 = \norm{T_{X}^{k}(F)}_B^2 = \delta^k_{F}(X)$. 
As $\Disc^{k,\R} = \cup_{X\in \Sn^\R} \disc_{d,X^*}^{k}$, we recover, in another way that $\dist_B(F, \disc^{k,\R}_{d})^2 = \min_{X\in \Sn^\R} \delta^k_{F}(X)$.
This shows that the minimal distance to  $\disc^{k,\R}_{d}$ is reached at a point
 $R^{k+1}_{X}(F)$, 
where $X\in \Sn^\R$ minimizes the function 
$\delta^k_F$. Thus, computing a closest point on $\disc^{k,\R}_{d}$ reduces to solving a polynomial optimization problem on  $\Sn^\R$.



\begin{example}\label{ex:binary cubic}
We consider the space $\H_{3,2}$ of binary cubics. In the affine hyperplane consisting of $F=X_1^3+aX_0X_1^2+bX_0^2X_1+cX_0^3$, the discriminant $\Xi_{3,2}$ is defined by
\begin{align}
   \Delta\ =\ a^2 b^2 - 4b^3 - 4a^3 c - 27c^2 + 18 a b c
\end{align}
and its real part is depicted in Fig. \ref{fig:binary cubics}.  
\begin{figure}
    \centering
    \includegraphics[width=0.7\linewidth]{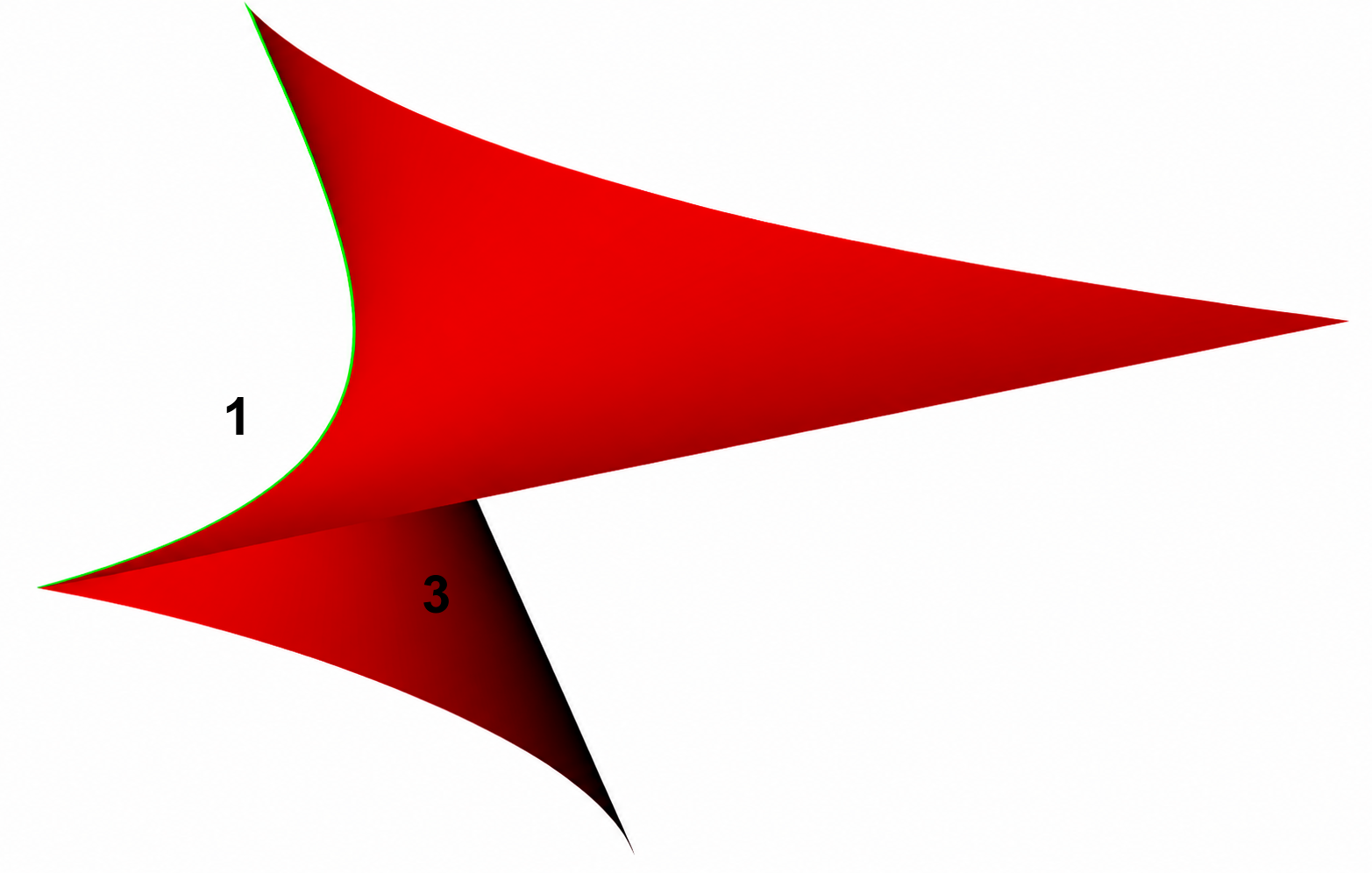}
    \caption{\centering The discriminant $\Xi^\R_3$ of real binary cubics divides the space into regions of cubics with $1$ or $3$ real zeros. Its singular locus (green) consisting of cubics with a triple zero coincides with the Veronese curve.}
    \label{fig:binary cubics}
\end{figure}  
The binary cubics that belong to this surface have (at least) a double root. By Proposition \ref{prop:sing_discr}, the cubics $F$ on the singular locus of  $\disc_{3,2}$ have a triple root (since a cubic cannot have two distinct double roots), that is, $F=X^3$
for some $X\in \H_{1,2}$. Therefore, the singular locus of $\disc_{3,2}$ is the Veronese variety,
$\sing(\disc_{3,2}) = \cl V_{3,2}$. It is a very special situation, where the dual of a variety coincides with its singular locus. The red surface on Fig. \ref{fig:binary cubics} separates the space of real cubics into two connected components, in one of which the cubics have one real zero, whereas in the other - three real zeros.

Given $F = X_1^3 + X_1^2X_0+ 2\, X_1X_0^2+ 3\, X_0^3 \in \H_{3,2}$, we want to compute a form with a real singular point, which is the closest to $F$ with respect to the apolar/Bombieri-Weyl norm. This reduces to solving the following polynomial optimization problem:
\begin{align}\label{eq:opt cubic}
\inf\  & -2 F^2(X^*) + \frac 1 3 \big( (D_{X_0} F(X^*)^2 + (D_{X_1} F(X^*))^2 \big)\\
\textup{s.t.}\  & \norm{X}^2=1
\end{align}
Using Moment-Sum of Squares relaxation techniques \cite{lasserre2010moments} and the package \textup{MomentPolynomialOpt.jl}\footnote{\url{https://github.com/AlgebraicGeometricModeling/MomentPolynomialOpt.jl/}}, we obtain two opposite minimizers, for a relaxation of order $5$. One of them is $X_{\min}^*\approx (-0.262683, 0.964882)$, which yields the closest real singular cubic $F^*$:
{\small\begin{align}
 F^* & :=  R^2_{X_{\min}}(F) =  F- T^1_{X_{\min}}(F) \\
 & = F - F(X_{\min}^*)\, (X_{\min})^3 - (-X^*_{\min,1}\, D_{X_0} F(X_{\min}^*)+  X^*_{\min,0}\, D_{X_1} F(X_{\min}^*) ) X_{\min}^{\perp} (X_{\min})^2 \\
& \approx 0.059248\, X_1^3 + 0.652888\, X_1^2 X_0 + 2.398174\, X_1 \, X_0^2 + 2.936309\, X_0^3
\end{align}}%
We notice that $T^1_{X_{\min}}(F) = T^1_{-X_{\min}}(F)$, so that $F$ has a unique closest singular cubic $F_{\min}$.
We verify that $F_{\min}$ is singular at $X_{\min}^*$,
and that $\nabla^{\perp,2}_{X_{\min}}F_{\min} = 0$, so that $F_{\min} \in \cl C_{3,2}$ is a singular point in $\disc_{3,2}^{\mathrm{sing}} = \cl V_{3,2}$ and decomposes
as $F_{\min} \approx 3.268729\, (X_{\min}^\perp)^3$.

We consider now the binary cubic $F = X_1^3 - X_1^2X_0 -X_1X_0^2 +1.02\,X_0^3 = (X_1-X_0)^2 (X_1 +X_0) + 0.02 X_0^3$.
The corresponding optimization problem \eqref{eq:opt cubic} yields the minimizer $X_{\min}^* \approx (0.70446, 0.709744)$ and its opposite.
The closest singular form to $F$ is
$$
F^* = R^2_{X_{\min}}(F) = 
0.997486\,X_1^3 - 1.007486\,X_1^2 X_0² - 1.00743\,X_1 X_0^2 + 1.017542\,X_0^3,
$$
which is singular at $X_{\min}^*$. It is a smooth point of 
$\disc_{3,2}$, since $\nabla_{\x^*}^{\perp,2}F^* \approx 5.699161 (X_{\min}^\perp)^2$ is a non-degenerate quadric on $X_{\min}^\perp$.

These examples show that a minimizer of the distance to the discriminant $\disc_{3,2}$ can be either a smooth point of $\disc_{3,2}$ or on $\cl C_{d,2}$. 
In Proposition \ref{prop:critical point}, we will see that this is also the case for the critical points of $\delta_F^1(\x)$. 
\end{example}

Next, we show that the distance minimization problem to $\Disc$ from a generic $F\in \Hdn$ has a unique solution.

\begin{proposition}\label{prop:unique_min}
    Let $F\in \Hdn$ be a generic form. Then $\delta_F^1: \Sn^\R\rightarrow \R$ has only nondegenerate critical points and on each critical level there is only one pair of antipodal critical points. In particular, $\delta_F^1$ has a unique pair of minimizers $X_{\min}, -X_{\min} \in \Sn^\R$ that are nondegenerate.
\end{proposition}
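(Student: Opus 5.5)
The plan is to treat $\delta^1_F(X)=(1-d)F(X^*)^2+\frac1d\norm{\nabla_X F}^2$ as a family of even functions on $\Sn^\R$ depending linearly on the coefficients of $F$ through a quadratic expression. I would apply a parametric transversality (Sard-type) argument to the whole family, and then exploit that the family is rich enough to separate values at distinct non-antipodal points.

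\emph{Step 1: nondegeneracy.} Consider the map $\Phi:\Hdn^\R\times \Sn^\R\to T^*\Sn^\R$, $(F,X)\mapsto d_X\delta^1_F$. If $\Phi$ is a submersion onto the zero section, then by the parametric transversality theorem, for $F$ outside a measure-zero semialgebraic set, $X\mapsto d_X\delta^1_F$ is transverse to the zero section. That transversality is exactly the nondegeneracy of all critical points. By the $O(n)$-invariance of $\delta^1$ (the Remark on the $O(n)$ action), it suffices to check submersivity at $X=X_0$ with $X_0,\dots,X_{n-1}$ orthonormal.

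By Lemma \ref{lem:delta1 crit}, the tangential gradient at $X_0$ is, up to the factor $2/d$, the quantity $\nabla^{\perp,2}_{X_0}F\llcorner\nabla^\perp_{X_0}F$. We perturb $F$ by $\epsilon X_0^{d-2}Q$ with $Q\in\hat\H_2$. This leaves $F(X_0^*)$ and $\nabla^\perp_{X_0}F$ unchanged and changes $\nabla^{\perp,2}_{X_0}F$ by a multiple of $Q$. Hence the derivative of the gradient in this direction is $c\,Q\llcorner\nabla^\perp_{X_0}F$, which ranges over all of $X_0^\perp$ as soon as $\nabla^\perp_{X_0}F\neq0$.

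If instead $\nabla^\perp_{X_0}F=0$, we perturb by $\epsilon X_0^{d-1}L$ with $L\in\hat\H_1$. This changes $\nabla^\perp_{X_0}F$ by $L$, so the derivative is $c\,\nabla^{\perp,2}_{X_0}F\llcorner L$, which is surjective when $\nabla^{\perp,2}_{X_0}F$ is nondegenerate. The remaining bad set $\{(F,X):\nabla^\perp_XF=0,\ \det\nabla^{\perp,2}_XF=0\}$ has codimension $(n-1)+1=n$ in $\Hdn\times\Sn^\R$. Its projection to $\Hdn$ therefore has codimension at least one, so generic $F$ avoids it.

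\emph{Step 2: distinct critical values for non-antipodal pairs.} Since $\delta^1_F$ is even, critical points come in antipodal pairs. Consider the incidence set
$$Z=\{(F,X,Y): X,Y \text{ critical for }\delta^1_F,\ Y\neq\pm X,\ \delta^1_F(X)=\delta^1_F(Y)\}.$$
We show that $\dim Z<\dim\Hdn$. For fixed linearly independent $X,Y$, the criticality conditions at $X$ and at $Y$ together with the level equality give $2(n-1)+1$ polynomial equations in $F$. I would show they are independent at every solution by exhibiting perturbations separately controlling each point.

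For the criticality conditions, use $X^{d-2}Q$ and $X^{d-1}L$ as in Step 1. For the level equality, use $X^d$, which changes $F(X^*)$ and the value $\delta^1_F(X)$. Each perturbation supported at $X$ should have negligible effect at $Y$, and I would modify it to have none: replace $X^d$ by a form in $\disc^1_{d,Y}$, such as the orthogonal projection $R^2_Y(X^d)$. This projection still has nonzero value and prescribed low-order Taylor data at $X$, because the linear maps "$1$-jet at $X$" and "$1$-jet at $Y$" are jointly surjective for $d\ge3$ and $X,Y$ independent. The $2$-jet conditions in Step 1 need $2$-jet at $X$ plus $1$-jet at $Y$, which is also jointly surjective for $d\ge 3$.

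Then the fibers of $Z$ over pairs $(X,Y)$, a space of dimension $2(n-1)$, have codimension $2n-1$. Hence $\dim Z\le \dim\Hdn-1$, and a generic $F$ lies outside the projection of $Z$. The statement about minimizers follows at once: the minimum is attained, its level contains exactly one antipodal pair, and that pair is nondegenerate by Step 1.

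\emph{Main obstacle.} The delicate part is the joint-surjectivity claim for jets at two distinct points, and the handling of degenerate loci where the gradients vanish, such as $\nabla^\perp_XF=0$ together with the level condition. There I would use the $X^d$ direction to move $F(X^*)$. Its effect on the value is $2(1-d)F(X^*)$ plus the contribution through $\nabla_X F$. So one must also dispose of the subcase $F(X^*)=0=\nabla^\perp_XF$, where this derivative vanishes. That subcase adds $n$ conditions at $X$ and is thus again of positive codimension after projection.
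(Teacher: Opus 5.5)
Your Step 1 follows the paper's argument. It uses the same map $(F,X)\mapsto \nabla^{\perp,2}_XF\llcorner\nabla^\perp_XF$ into $T^*\Sn^\R$, the same two perturbation families ($X^{d-2}Q$ if $\nabla^\perp_XF\neq 0$, $X^{d-1}L$ if $\nabla^\perp_XF=0$), the same codimension-$n$ exclusion of critical points of $F$ with degenerate $\nabla^{\perp,2}_XF$, and then parametric transversality. That part is fine.

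\textbf{Step 2 has a genuine gap.} You impose criticality at $X$ and at $Y$ as equations on $F$. You therefore need the $F$-derivative of all $2n-1$ equations to be surjective along $Z$, and your decoupling does not give this.

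The criticality equations at $Y$ are not controlled by the $1$-jet at $Y$. Their derivative in the direction $H$ contains $\nabla^{\perp,2}_YH\llcorner\nabla^\perp_YF$. So whenever $\nabla^\perp_YF\neq0$, a perturbation with vanishing $1$-jet at $Y$ still moves them, and decoupling requires control of $2$-jets at both points.

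But $2$-jets at two non-antipodal points are not independent in low degree. The subspaces $X^{d-2}\H_2$ and $Y^{d-2}\H_2$ share $XY\H_1$ for $d=3$ and $X^2Y^2$ for $d=4$. Even your weaker claim, that the $2$-jet at $X$ and the $1$-jet at $Y$ are jointly surjective for $d\ge 3$, fails at $d=3$, since $XY^2\in X\H_2\cap Y^2\H_1$. For $d\ge5$ your scheme can be repaired by decoupling $2$-jets, but the fibre-codimension bound $2n-1$ on $Z$ remains unproved for $d=3,4$, which the statement covers.

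\textbf{The missing idea is that criticality never has to be imposed as an equation.} The paper applies parametric transversality to the single scalar function $K(F,X,Y)=\delta^1_F(X)-\delta^1_F(Y)$ on $(\Hdn\setminus\Disc)\times\{(X,Y): Y\neq\pm X\}$.

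Once $\partial_F K\neq 0$ is known everywhere, $0$ is a regular value of $K_F$ for generic $F$. That is, $(d_X\delta^1_F,d_Y\delta^1_F)\neq(0,0)$ whenever $\delta^1_F(X)=\delta^1_F(Y)$, which is exactly the claim.

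Checking $\partial_FK\neq0$ needs only the $1$-jet decoupling you correctly observed holds for $d\ge 3$. Set $s=\scp{X,Y}$.
\begin{itemize}
\item If $F(X^*)\neq0$, take $H=X^{d-3}(X-sY)^2((1-3s^2)X+2sY)$. Its $1$-jet at $Y$ vanishes and $\nabla^\perp_XH=0$, so $D_HK=2F(X^*)(1-s^2)^3\neq0$.
\item If $F(X^*)=0$, then $\nabla^\perp_XF\neq 0$ because $F$ is nonsingular. Take $H=X^{d-3}(X-sY)^2\nabla^\perp_XF$, which gives $D_HK=\frac{2}{d}(1-s^2)^2\norm{\nabla^\perp_XF}^2\neq0$.
\end{itemize}
Equivalently: by Step 1 the nondegenerate critical points $X_i(F)$ move smoothly with $F$, and by the envelope theorem the critical values satisfy $d(c_i-c_j)=\partial_FK(F,X_i,X_j)$.
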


\begin{proof}
By Lemma \ref{lem:delta1 crit}, $X\in \Sn$ is a critical point of $\delta_F^1$ if and only if $\nabla^{2,\perp}_{X} F\llcorner \nabla^{\perp}_X F=0$.
Let us consider a map $L:\Hdn\times \Sn^\R\rightarrow T^*\Sn^{\R}$, $L(F,X)=(X,\nabla^{2,\perp}_{X} F\llcorner \nabla^{\perp}_X F)$, where $T\BM{^*}\Sn^{\R}$ is the cotangent bundle of the sphere.
We will show that $L$ is transverse to the zero section $\Sn^{\R}=\{(X,0): X\in \Sn^{\R}\}\subset T^*\Sn^{\R}$ and then, (semialgebraic) parametric transversality lemma \cite[Thm. 2]{LeLoi2006Density} (see also \cite[Thm 6.35]{Lee2013} and \cite[Thm. 9.6.2]{BCR:98}) imply that for a generic $F\in \Hdn$, $L_F:\Sn^{\R}\rightarrow T^*\Sn^{\R}$, $L_F(X)=L(F,X)$, is transverse to $\Sn^{\R}\subset T^*\Sn^{\R}$, which is equivalent for critical points of $\delta_F^1$ to be nondegenerate.
To apply the parametric transversality lemma, we compute the differential of $L$ at $F\in \Hdn$ in the direction of $H\in \Hdn$:
\begin{align}
    D_H L(F,X)\ =\ 
 \nabla^{2,\perp}_X H \llcorner \nabla^{\perp}_X F+ \nabla^{2, \perp}_X F \llcorner \nabla^{\perp}_X H
\end{align}
Note that for a generic $F\in \Hdn$, the quadratic form $\nabla^{2,\perp}_X F$ is of rank $n-1$ at every its critical point $X^*$.
Indeed, for this, consider the incidence variety
\begin{align}
    \mathcal{IV}\ =\ \{(F,X)\in \Hdn\times \Sn\,:\, \nabla^{\perp}_X F =0,\ \textrm{rank} \nabla^{2,\perp}_X F< n-1\}
\end{align}
The fiber of the second projection over $X\in \Sn$ is identified with a Zariski closed subset of the vector space $\{F\in \Hdn: \nabla^\perp_X F=0\}$ (which is of codimension $n-1$).
It is proper, as it does not contain $F = X_1^{d-2} (X_2^2+\dots+X_{n}^2)$ (with $X, X_2,\dots, X_n$ forming an orthonormal basis of $\Hun$) having $\BM{\frac 1 2}\,  \nabla^{2,\perp}_X F=X_2^2+\dots+X_n^2$ of rank $n-1$ \BM{(by Taylor expansion \eqref{eq:taylor})}.
As all fibers are isomorphic (they are transformed into each other by the action on $\Hdn$ induced by a rotation of $\Sn$), the incidence variety $\mathcal{IV}$ has dimension not greater than $\dim \Sn+ (\dim \Hdn -\dim \Sn - 1)=\dim \Hdn -1$. Hence, its projection on the first factor is a semialgebraic subset of $\Hdn$ of dimension at most $\dim \Hdn -1$. 
It consists of $F\in \Hdn$ having critical points $X\in \Sn$ with $\nabla^{2,\perp}_X F$ of rank less than $n-1$.
So, for a generic $F\in \Hdn$ at every critical point $X\in \Sn$ the quadratic form $\nabla^{2,\perp}_X F$ is of rank $n-1$.
\medskip

If $X$ is a critical point of $\delta_F^1$ such that $\nabla_X^{\perp} F=0$, then $D_H L(F,X)=\nabla_X^{2,\perp} F \llcorner \nabla^{\perp}_X H$ is a surjective linear map (given by the quadratic form $\nabla^{2,\perp}_X F$ that is of rank $n-1$) from $\Hdn$ to $T_X\Sn$ (actually, it only depends on the linear part $\nabla^\perp_X H$ of $H\in \Hdn$).
If $X$ is a critical point of $\delta_F^1$ such that $\nabla_X^{\perp} F\neq 0$, we restrict $D_H L(F,X)$ to the subspace $H\in \Hdn$ such that $\nabla^{\perp}_X H=0$.
However, even on this smaller subspace the map $H\in \Hdn\mapsto D_H L(F,X)=\nabla^{2,\perp}_X H\llcorner \nabla_X^\perp F\in T_X \Sn$ is surjective, as different terms of the Taylor expansion \eqref{eq:taylor} are independent from each other and so quadratic forms $\nabla^{2,\perp}_X H$ can be chosen arbitrarily.
Then \cite[Thm. 2]{LeLoi2006Density} yields the first claim of the proposition.

Consider now an open subset  $\Delta=\{(X,Y)\in \Sn\times \Sn: X\neq \pm Y\}$ of pairs of linear forms in the unit sphere and consider a map $K: (\Hdn\setminus \Disc)\times \Delta\rightarrow \R$ defined as
\begin{align}
    K(F,X,Y)\ =\ \delta_F^1(X)-\delta_F^1(Y).
\end{align}
Note that the second claim of the proposition is equivalent to the regularity of the equation $K(F,X,Y)=0$, that is, $\nabla_X K=\nabla_Y K=0$ does not have a solution $(X,Y)\in \Delta$ such that $\delta_F^1(X)=\delta_F^1(Y)$.  
To show that the above equation is regular, we again exploit parametric transversality argument.
For this let us compute the differential of $K$ with respect to $F\in \Hdn$,
{\small \begin{align}
    D_H K(F,X,Y)\ =\ \left(2F(X^*)H(X^*)+\frac{2}{d}\langle \nabla^{\perp}_X F, \nabla^{\perp}_X H\rangle\right) -
    \left(2F(Y^*)H(Y^*)+\frac{2}{d}\langle \nabla^{\perp}_Y F, \nabla^{\perp}_Y H\rangle\right).
\end{align}}For any $(F,X,Y)\in (\Hdn\setminus \Disc)\times \Delta$, we construct $H\in \Hdn$ such that $D_H K(F,X,Y)\neq 0$ which implies that $0$ is a regular value for the map $K$.
Assume first that $F(X^*)\neq 0$.
Since $s:=\langle X,Y\rangle\in (-1,1)$ (cf. $(X,Y)\in \Delta$), we can define a degree $d$ form
\begin{align*}
    H\ =\ X^{d-3}(X-s Y)^2 ((1-3s^2)X+2s Y)
\end{align*}
that satisfies $H(X^*)=(1-s^2)^3>0$, $H(Y^*)=0$, $\nabla^{\perp}_X H=0$, $\nabla_Y^{\perp} H=0$ and hence $D_H K(F,X,Y)=2 F(X^*) H(X^*)\neq 0$.
If $F(X^*)=0$, we must have $\nabla^{\perp}_X F\neq 0$ (since $F\notin \Disc$ is non-singular).
Then a degree $d$ form
\begin{align*}
    H\ =\ X^{d-3}(X-sY)^2 \nabla^{\perp}_X F 
\end{align*}
satisfies $H(X^*)=H(Y^*)=0$, $\nabla^\perp_X H=(1-s^2)^2 \nabla^\perp_X F$, $\nabla^\perp_Y H=0$ and hence $D_H K(F,X,Y)=\frac{2}{d}(1-s^2)^2\Vert \nabla^\perp_X F\Vert^2\neq 0$.

\end{proof}

We illustrate this proposition by the following example.

\begin{example}\label{ex:cusp_min}
\BM{Let $\kk=\R$.}
We consider a perturbation $F_t=t X_0^{d-1}X_1+F$, $t>0$, of the degree $d$ form $F=X_0^{d-2}(X_2^2+\dots+X_{n-1}^2)+X_0^{d-3} X_1^3+\sum_{i=1}^{n-1} a_i X_i^d$ constructed in Example \ref{ex:formeA_2}.
We know that $X_0^*=[1:0:\dots:0]$ is the only singular point of $F_0=F$, it is of type $A_2$ and so $F$ is in the smooth locus of $\mathcal{C}_{d}$. 
Recall from Proposition \ref{prop:distdisc} 
that the squared distance from $G\in \Hdn$ to the space $\disc_{d,Y^*}$ of forms singular at $Y^*\in \BM{\P{\cl{E}}}$ is given by $\Vert T_Y(G)\Vert^2_B=G(Y^*)^2(1-d)+\frac{1}{d}\Vert \nabla_Y G\Vert^2$.
In particular, as the monomial $X_0^{d-1}X_1$ is orthogonal to the space $\disc_{d,X_0^*}$, we have $T_{X_0}(F_t)=t X_0^{d-1}X_1$ and 
\begin{align}\label{eq:local_ineq}
\dist_B(F_t,\disc_{d})\ \leq\ \dist_B(F_t,\disc_{d,X_0^*})\ =\ \Vert t X_0^{d-1}X_1\Vert_B=\frac{t}{\sqrt{d}}.
\end{align}
Also, since $F$ has no other singularities in $\BM{\P{\cl{E}}}$, for $Y\in \BM{\Sn^\R}\setminus U$ outside any Euclidean ball $U$ of $X_0$ we have (by compactness) 
$\dist_B(F,\disc_{d,Y^*})\geq c$ for some constant $c>0$.
Therefore, $\dist_B(F_t,\disc_{d,Y^*})\geq \dist_B(F,\disc_{d,Y^*})-t/\sqrt{d}\geq c/2$ for sufficiently small $t>0$ and by \eqref{eq:local_ineq} the closest form in $\Xi_{d}^\R$ to $F_t$ can have a real singularity only at a point in $U$.
On the other hand, the partial derivatives of $\frac{\partial F_t}{\partial X_1}, \frac{\partial F_t}{\partial X_i}$, $i=2,\dots, n-1$, written in the local chart on $\Sn$ near \BM{$X_0$} 
\BM{given by $X= y_0 X_0+ \cdots + y_{n-1} X_{n-1}$ with $y_0=\sqrt{1-y_1^2-\dots-y_{n-1}^2}$}, are
\begin{align}
    \frac{\partial F_t}{\partial X_1}\ &=\ t y_0^{d-1}+3y_0^{d-3}y_1^2+d a_1 y_1^{d-1}\ =\ t+3y_1^2 -t \frac{(d-1)}{2}\Vert \bm y\Vert^2+\mathcal{O}(\Vert \bm y\Vert^3)\\
    \frac{\partial F_t}{\partial X_i}\ &=\ 2y_0^{d-2}y_i+ d a_i y_i^{d-1}\ =\ 2y_i+\mathcal{O}(\Vert \bm y\Vert^3),\quad i=2,\dots, n-1.
\end{align}
Therefore, if $0<t<\min\{\frac{6}{d-1},\frac{2}{\sqrt{d-1}}\}$ we have
\begin{align}
    \Vert \nabla_X F_t\Vert^2\ =\ t^2\left(1-\frac{d-1}{2} \Vert \bm y\Vert^2\right)^2+6t X_1^2+4\sum_{i=2}^{n-1}  X_i^2+\mathcal{O}(\Vert \bm \Vert^3)\ \geq\ t^2
\end{align}
in a neighborhood $U$ of \BM{$X_0$} (written in the local coordinates) with equality if only if $y_1=y_2=\dots=y_{n-1}=0$.
It follows that the distance from $F_t$ to $\disc_{d,Y^*}$ for $Y\in U$ cannot be smaller than $t/\sqrt{d}$ with equality only for $X=X_0$.


\end{example}

\begin{example}
In this example, we consider the discriminant in the space $\H_{4,2}$ of binary quartics.
To visualize this $3$-fold, we consider a hyperplane $H$ containing quartics of the form $F=X_1^4+a X_0^2X_1^2+ b  X_0^3X_1+ c X_0^4$, where $a, b, c$ are arbitrary coefficients.
Among such forms, singular ones are defined \BM{by the polynomial equations} $F(1,X)=D_1F(1,X)=0$.
Eliminating the variable $X$, we obtain that (in an affine chart) the hyperplane section $H\cap \disc_{4}$ is defined by
\begin{align}
   \Delta\ =\ 16a^4c - 4a^3b^2 - 128a^2c^2 + 144ab^2c - 27b^4 + 256c^3.
\end{align}
The zero set $\disc'=\Xi'^{\, \R}\cup \cl M'$ of $\Delta$ in $\R^3$ is the union of the \emph{swallowtail surface} \cite[p. 47]{ArnoldGuseinZadeVarchenko1985} with the parabola $\cl M'$ given by $b=0$, $4c=a^2$, see Figure \ref{fig:swallowtail}.
Denoting by $\BM{X^*}$ the double zero of $F(1,X)=X^4+aX^2+bX+c$, one can parametrize $\disc_4$ via $b=-4(X^*)^3-2a(X^*)$, $c=3(X^*)^4+a\,(X^*)^2$, see \cite[p.48]{ArnoldGuseinZadeVarchenko1985}. 
The surface ${\disc'^{\R}}$ is the semialgebraic set obtained as the image of this parametrization for real $a$ and $X$.
Points on the curve $\cl M'$ correspond to polynomials $F(1,X)$ having two double zeros \BM{(i.e. they belongs to the Maxwell component $\cl M_{4,2}$ of $\sing(\disc_{4,2})$, see Proposition \ref{prop:sing_discr})}.
Indeed, since $F(1,X)$ does not have cubic term, its double zeros must be $X^*$ and $-X^*$ and so $F(1,X)=(X-X^*)^2(X+X^*)^2=X^4-2(X^*)^2 X^2+ (X^*)^4$.
We then have $a=-2(X^*)^2$, $b=0$ and $c=(X^*)^4$.
We distinguish two cases. If $X^*$ is real, $F(X^*,1)$ trace the self-intersection locus of $\disc'$, it is one branch of the parabola $\cl M'$ given by $a \leq 0$.
If $X^*$ is a non-real zero, so is its conjugate $\overline{X^*}=-X^*$. 
Then $X^* = \BM{\sqrt{-1}\, \zeta}$ for some real $\zeta\in \R$ and $a=2 \zeta^2$, $b=0$ and $c=\zeta^4$ trace the other branch of $\cl M'$ given by $a>0$. Note that this second branch is disconnected from the swallowtail surface. 
Smooth points of $\disc'$ correspond to polynomials with one double zero.

Points on the edge curve $\cl C'$ correspond to polynomials with one triple zero \BM{(i.e. points on the caustic component $\cl C_{4,2}$ of $\disc_{4,2}$, see Proposition \ref{prop:sing_discr})}. 
The swallowtail surface $\disc'^{\, \R}$ is then (an affine part of) the hyperplane section $\disc'^{\, \R}\cap H$ of the real discriminant,
it separates $\R^3$ into regions corresponding to quartic polynomials $F(1,X)$ with different number of real zeros (see Fig. \ref{fig:swallowtail}).
\end{example}

\begin{figure}
    \centering
    \includegraphics[width=0.7\linewidth]{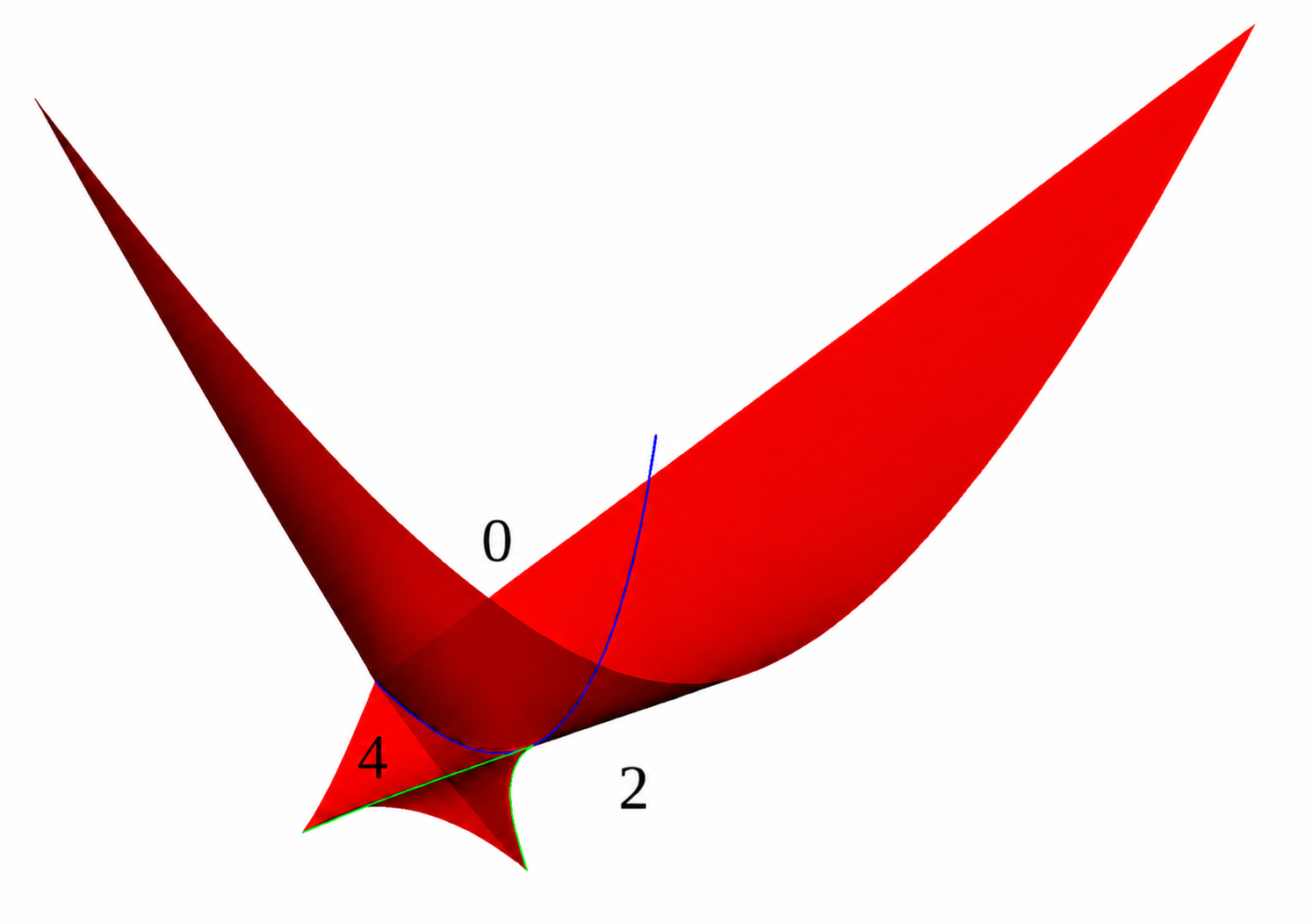}
    \caption{The swallowtail surface $\disc'^{\,\R}$ (red) together with the parabola $\cl M'$ (blue) constitute the real part of a hyperplane section of $\disc_{4,2}^{\C}(\R)$. It divides the space into regions of real quartics with $4$, $2$ or $0$ real zeros respectively.}
    \label{fig:swallowtail}
\end{figure}

\section{The critical points of the distance functions}\label{sec::crit_points}
In this section, we analyse the critical points of $\delta^{k}_F(X)$
and relate them to the critical points of the distance to $\disc_{d}, \cl C_{d}$ and their duals $\Ver$, $\Tan$, for a generic $F\in \H_{d}$.

\subsection{The geometry of the  critical points}
We analyse first the critical points of the distance functions $\delta_F^k(X)$ on the sphere $\Sn$.

\begin{proposition}\label{prop:critical point d0}
For $F\in \Hdn$, $X$ is a critical point of $\delta^0_F$ on $\Sn$, if and only if, either $F(X^*) = 0$ or $\nabla^{\perp}_{X}F= 0$.

For $F$ generic and $X$ a critical point of $\delta^0_F$ such that $F(X^*)\neq 0$, we have $R^{1}_{X}(F)= R^{2}_{X}(F) \in \smooth(\Disc)$ and $R^{1}_{X}(F)$ is a critical point of $\dist(F, \Disc)^2$, and  $T_{X}^0(F) \in \Ver^{\times}$. 

\end{proposition}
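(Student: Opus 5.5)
First I would compute the critical point condition directly. Since $\delta^0_F(X)=F(X^*)^2$, its differential at $X$ is $2F(X^*)\nabla_X F$. By the decomposition \eqref{eq:nabla perp}, $\nabla_X F = d\,F(X^*)X+\nabla^\perp_X F$ with $\nabla^\perp_X F\in X^\perp$. Restricted to the tangent space $T_X\Sn=X^\perp$, only the $\nabla^\perp_X F$ part survives. So $X$ is critical if and only if $F(X^*)\,\nabla^\perp_X F=0$, which is exactly the stated dichotomy. This is the same projection argument as in Lemma \ref{lem:delta1 crit}.

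For the second part, take $X$ critical with $F(X^*)\neq0$, so $\nabla^\perp_X F=0$. Then the degree-one term of the homogeneous Taylor expansion vanishes, and $T^1_X(F)=T^0_X(F)=F(X^*)X^d$. Hence $R^1_X(F)=R^2_X(F)$, and this form lies in $\disc_{d,X}$ by the remark after \eqref{eq:k_discr}. Moreover $T^0_X(F)=F(X^*)X^d\in\Ver^\times$ is nonzero.

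Next I would check criticality of the distance. Put $G=R^1_X(F)$. Then $F-G=F(X^*)X^d$, and $X^d$ spans $T_G(\Disc)^\perp$ by Proposition \ref{prop:dual_ver}, provided $G$ is a smooth point of $\Disc$ singular at $X^*$. So $F-G\perp T_G\Disc$, and $G$ is a critical point of $\dist(F,\cdot)^2$ on $\smooth(\Disc)$.

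The main step is genericity: I must show $G\in\smooth(\Disc)$. By Proposition \ref{prop:sing_discr}, this means $G$ has $X^*$ as its unique singular point and $\rank\nabla^{\perp,2}_X G=n-1$. Note that $\nabla^{\perp,i}_X G=\nabla^{\perp,i}_X F$ for $i\ge2$. Thus the rank condition says the Hessian of $F$ restricted to $X^\perp$ has full rank at every critical point $X$ of $F$ on the sphere. This is proved for generic $F$ by the incidence-variety dimension count in the proof of Proposition \ref{prop:unique_min}; there $\nabla^\perp_X F=0$ is exactly our condition.

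For uniqueness of the singular point I would run a similar count. Consider pairs $(F,X,Y)$ with $X$ an eigenvector of $F$ and $Y\neq\pm X$ a singular point of $R^1_X(F)=F-F(X^*)X^d$. The conditions are $\nabla^\perp_X F=0$ ($n-1$ conditions) and $\nabla_Y(F-F(X^*)X^d)=0$ ($n$ conditions). For fixed independent $X,Y$, these are independent linear conditions on $F$: one can prescribe the low-order Taylor data at two distinct points independently, as with the forms $H$ built in the proof of Proposition \ref{prop:unique_min}. This gives fibre codimension $2n-1$ over a $(2n-2)$-dimensional base, so the projection to $\Hdn$ is not dominant.

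The step I expect to be the main obstacle is verifying this independence of the conditions at two points, including near-antipodal or complex configurations. A cleaner alternative is to use $O(n)$-equivariance to fix $X$, and then argue that the linear space $\disc_{d,X}$ meets $\cl M_d$ in a proper subset. That properness is already established in the proof of Proposition \ref{prop:sing_discr}.
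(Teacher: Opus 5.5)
Your first part, and the identifications $R^1_X(F)=R^2_X(F)$ and $T^0_X(F)=F(X^*)X^d\in\Ver^\times$, match the paper. Your proof is correct, but the genericity step takes a different route.

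\textbf{How the paper does it.} It uses the chart parametrization $\varphi_0$ to build a single irreducible incidence variety $\cl G_1=\kk^{n-1}\times\hat\H_0\times\{0\}\times\hat\H_2\times\cdots\times\hat\H_d$, of dimension $\dim\Hdn$. It notes that ``$\rank H_2=n-1$ and $\varphi_2(\lb,H_2,\dots,H_d)\notin\cl M_d$'' is an open condition on $\cl G_1$. It shows this open set is non-empty using the explicit form of Example \ref{ex:smooth disc}. Both smoothness conditions thus come at once from one witness, and no two-point statement is needed.

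\textbf{How you do it, and what each buys.} You split smoothness into two non-dominance counts: the rank count borrowed from Proposition \ref{prop:unique_min}, and a two-point count for uniqueness of the singular point. This trades the explicit witness for linear algebra. You also verify criticality directly via $T_G\Disc=\vspan{X^d}^\perp$ (Proposition \ref{prop:dual_ver}), a step the paper's proof leaves implicit.

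\textbf{The step you flag as the main obstacle is not one.} Write $F=cX^d+G'$ with $G'(X^*)=0$. Your $2n-1$ conditions then say exactly that $G'\in\disc_{d,X}\cap\disc_{d,Y}$, with $c$ free. For $d\ge3$ and $[Y]\neq[X]$ this linear space has codimension $2n$ in $\Hdn$. To see this, apply a linear change of coordinates sending $X^*,Y^*$ to $\bm e_1,\bm e_2$. The conditions then kill the coefficients of the monomials $X_1^{d-1}X_j$ and $X_2^{d-1}X_j$, $j=1,\dots,n$, which are pairwise distinct since $d-1\ge2$. This is a projective-linear statement, so near-antipodal or complex configurations play no role. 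It is also the codimension-$n$ fact already used in the proof of Proposition \ref{prop:sing_discr}.

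\textbf{One correction.} Let $Y$ range over $\P\Hun\setminus\{[X]\}$ rather than over $\Sn\setminus\{\pm X\}$, since the second singular point may be isotropic. The base still has dimension $2n-2$, so your count is unchanged.

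\textbf{Your fibrewise alternative also works,} and it handles the rank condition at the same time. Note that $\{F:\nabla^\perp_XF=0\}=\vspan{X^d}\oplus\disc_{d,X}$. Proposition \ref{prop:sing_discr} shows that both $\cl C_d\cap\disc_{d,X}$ and $\cl M_d\cap\disc_{d,X}$ are hypersurfaces in $\disc_{d,X}$, and $\cl C_d\cup\cl M_d=\sing(\Disc)$. Hence the bad set has codimension at least $1$ in each fibre. Taking the union over $X\in\Sn$ gives dimension at most $\dim\Hdn-1$.
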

\begin{proof}
The critical points $X$ of  $\delta^0_{F}(X)=F(X^*)^2$ on $\Sn$
satisfy $\nabla^{\perp}_{X}(F^2) = 2 F(X^*) \nabla^{\perp}_{X}F =0$, which proves the first part of the proposition.


Let us prove that for $F$ generic and $X \in \Hun$ such that $\nabla_{X}^{\perp} F= 0$, we have $\rank \nabla_{X}^{2,\perp} F = n-1$. 
Let $X_0, \ldots, X_{n-1} \in \Hun$ be a basis of $\Hun$ so that $X= X_{\lb}$ for some $\lb \in \kk^n$. Then 
$F= \varphi_0(\lb, H_0, H_1, \ldots, H_d)$ with  $H_i\in \hat \H_i$, $H_i^{\lb}= \nabla^{\perp,i}_{X_{\lb}} F$ and $H_1^{\lb}= 0$ or equivalently $H_1=0$. 
We consider the irreducible incidence variety 
\begin{align}
    \cl G_1 &= {\{ (\lb, H_0, H_1, \ldots, H_d) \in \kk^{n-1} \times \hat \H_0\times \hat \H_1 \cdots \times \hat \H_d \mid H_1 = 0 \}} \\
    &= \kk^{n-1}\times \hat \H_0 \times \set{0} \times \hat \H_2 \times \cdots \times \hat \H_d.
\end{align}
As $X_\lb$ is a critical point of $\delta_F^0$ for a generic $F\in \Hdn$, we have \BM{$\nabla^{\perp}_{X}F=0$ and by Taylor expansion \ref{eq:taylor}}, $F\in \varphi_0(\cl G_1)$ and $\zariski{\cl G_1}= \Hdn$.

Let us prove that 
$${\cl G}_1'=\{ (\lb, H_0, 0, H_2, \ldots, H_d) \in \cl G_1 \mid \rank H_2 = n-1, \varphi_{2}(\lb, H_2, \ldots, H_d) \not\in \cl M_{d} \}.
$$
is a non-empty open subset of $\cl G_1$.
The Example \ref{ex:smooth disc} shows that  
\begin{align}
F & = X_0^d + X_0^{d-2} (\sum_{i=1}^{n-1}X_i^2) + \sum_{i=1}^{n-1} a_i\,X_i^d
  = \varphi_{0}(\bm 0, 1, 0, \sum_{i=1}^{n-1}X_i^2, 0, \ldots, 0, \sum_{i=1}^{n-1} a_i\,X_i^d)
\end{align}
with $a_i\in \kk$ non-zero generic coefficients, is such that $R_{X_0}^2(F)\in \smooth(\Disc)$, $\nap F=0$ and $F(X_0^*)=1 \neq 0$.

This implies that $\cl G_1'\neq\emptyset$ is dense in $\cl G_1$ and that $\zariski{\varphi_0(\cl G'_1)}= \H_d$.
Thus
 for $F$ generic, that is, $F=\varphi_0(\lb, H_0, 0, H_2, \ldots, H_d)\in \varphi_0(\cl G'_1)$,  $X_\lb^*$ is a critical point of $F$, $\rank \nabla_{X_\lb}^{\perp, 2} F= \rank H_2^{\lb} = n-1$ and $R^2_{X_\lb}(F)= \varphi_{2}(\lb, H_2, \ldots, H_d) \not \in \cl M_{d}$ has a single singularity $X_\lb^*$.
By Proposition \ref{prop:sing_discr}, this implies that $R_{X_0}^2(F)$ is a smooth point of $\Disc$ and concludes the proof.
\end{proof}

\begin{proposition}\label{prop:critical point}
For $X\in \Hun^{\times}$ a critical point of $\delta^1_F$ on $\Sn$, 
\begin{itemize}
    \item either $\nabla^{\perp}_{X}F=0$;  then ${R^{2}_{X}(F)} \in \Disc$ and $T_{X}^1(F)\in \Tan^{\sing} = \Ver$ (for $d>2$). For $F$ generic,  ${R^{2}_{X}(F)} \in \smooth(\Disc)$ and is a critical point of $\dist(F,\disc_{d})^2$;
    \item or $\nabla^{\perp}_{X}F \neq 0$; then  ${R^{2}_{X}(F)} \in \cl C_{d} \subset \Disc^{\sing}$ and $T_{X}^1(F) \in \Tan$. For $F$ generic, ${R^{2}_{X}(F)} \in \smooth(\cl C_{d})$ and is a critical point of 
    $\dist(F, \cl C_{d})^2$, and $T_{X}^1(F) \in \smooth(\Tan)$.
\end{itemize} 
\end{proposition}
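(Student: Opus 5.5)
The plan starts from Lemma \ref{lem:delta1 crit}: $X\in\Sn$ is critical for $\delta^1_F$ if and only if $\nabla^{\perp,2}_X F\llcorner\nabla^\perp_X F=0$. Write the Taylor expansion $F=T^1_X(F)+R^2_X(F)$, where
\begin{align}
T^1_X(F)=F(X^*)X^d+X^{d-1}\nabla^\perp_X F,\qquad R^2_X(F)=\sum_{i\ge 2}\tfrac1{i!}X^{d-i}\nabla^{\perp,i}_XF .
\end{align}
By construction $R^2_X(F)\in\disc_{d,X}$, hence it lies in $\Disc$, and $T^1_X(F)\in\cl T_{d,X}\subset\Tan$. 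The second‑order Taylor coefficient of $R^2_X(F)$ at $X$ coincides with that of $F$, namely $\nabla^{\perp,2}_XF$. The argument then splits into two cases.

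\emph{Case $\nabla^\perp_XF=0$.} Here $T^1_X(F)=F(X^*)X^d\in\Ver=\sing(\Tan)$. Such $X$ is also a critical point of $\delta^0_F$, so Proposition \ref{prop:critical point d0} already gives $R^2_X(F)\in\smooth(\Disc)$ for generic $F$. For the critical‑point property, $F-R^2_X(F)=T^1_X(F)=F(X^*)X^d$, and by Proposition \ref{prop:dual_ver} this is orthogonal to $T_{R^2_X(F)}\Disc=\vspan{X^d}^\perp$.

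\emph{Case $\nabla^\perp_XF\neq0$.} Put $W_0=\nabla^\perp_XF\in X^\perp$. The criticality equation says $W_0\in\ann_1(\nabla^{\perp,2}_XF)$, so the Hessian of $R^2_X(F)$ at its singular point $X^*$ has a kernel transverse to $X$. Thus $\rank\le n-2$ and $R^2_X(F)\in\cl C_d$. Meanwhile $T^1_X(F)=X^{d-1}(F(X^*)X+W_0)$ with $W_0\notin\kk X$, and by Appendix \ref{sec:ok sing} this is a smooth point of $\Tan$. To see that $R^2_X(F)$ is critical, I would invoke Proposition \ref{prop:dual_ver_tan}. Provided $R^2_X(F)$ has a unique singularity of type $A_2$ with kernel direction $W_0$, the normal space of $\cl C_d$ there is
\begin{align}
T_{R^2_X(F)}(\cl C_d)^\perp=\vspan{X^d,\,X^{d-1}W_0}.
\end{align}
This is dual to Lemma \ref{lem:tgtOk}: $G(X^*)=0$ and $\nabla_XG\llcorner W_0=0$ cut out the orthogonal complement of these two forms. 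The difference $F-R^2_X(F)=T^1_X(F)$ lies in this span, so $R^2_X(F)$ is a critical point of $\dist(F,\cl C_d)^2$. Symmetrically, $F-T^1_X(F)=R^2_X(F)$ is orthogonal to $T_{T^1_X(F)}\Tan=X^{d-1}\Hun$ (augmented by the $X^{d-2}W_0\Hun$ piece of Lemma \ref{lem:tgtOk}), which is the dual statement.

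\emph{Main obstacle.} The key step is the genericity claim in the second case: for generic $F$, every critical $X$ with $\nabla^\perp_XF\ne0$ must give $R^2_X(F)$ with a unique singular point $X^*$, Hessian rank exactly $n-2$, and $\nabla^{\perp,3}_XF\llcorner W_0^3\neq0$; these are the hypotheses of Proposition \ref{prop:dual_ver_tan}. I would mimic the incidence argument of Proposition \ref{prop:critical point d0}. Work with the parametrization $\varphi_0(\lb,H_0,H_1,\dots,H_d)$ and let
\begin{align}
\cl G_2=\{(\lb,H_0,\dots,H_d)\;:\; H_2\llcorner H_1=0\}.
\end{align}
Since the conditions are linear in $H_2$ for fixed $H_1$, $\cl G_2$ is irreducible, and its image under $\varphi_0$ is dense in $\Hdn$ because generic $F$ has critical points (finitely many, by Proposition \ref{prop:unique_min}). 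It then suffices to show that the bad conditions define a proper closed subset of $\cl G_2$. For that, exhibit one good point: Example \ref{ex:formeA_2} perturbed by a term $X_0^{d-1}X_1$ as in Example \ref{ex:cusp_min} has $H_1=tX_1$, $H_2$ killing $X_1$, a cubic coefficient of $X_1^3$ that is nonzero, and a unique singularity. The remaining delicate point is excluding a second singularity, i.e.\ $R^2_X(F)\notin\cl M_d$, which is exactly what Example \ref{ex:formeA_2} provides. Dimension counting then shows that the image of the bad locus is a proper subvariety of $\Hdn$, which completes the argument.
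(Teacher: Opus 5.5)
Your proposal is correct and follows the paper's proof essentially step for step: the same case split from Lemma~\ref{lem:delta1 crit}, the same reduction of the first case to Proposition~\ref{prop:critical point d0}, and, for the genericity in the second case, the same incidence variety $\cl G_2$, the same open subset and the same witness (Example~\ref{ex:formeA_2} with an added $X_0^{d-1}X_1$ term). The only variation is that you check criticality directly, via the normal spaces $\vspan{X^d}$ and $\vspan{X^d, X^{d-1}W_0}$ from Propositions~\ref{prop:dual_ver} and~\ref{prop:dual_ver_tan}, whereas the paper argues from $\delta^1_F(X)=\norm{F}^2-\norm{R^2_X(F)}^2$; your check is the more explicit of the two.

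One small fix is needed. Define $\cl G_2$ with $H_1\neq 0$, as the paper does with $H_1\in\ann_1(H_2)^{\times}$. Otherwise $\{H_1=0\}$ is a second component of $\cl G_2$ of the same dimension, and your bad conditions hold identically on it. With that component included, neither your irreducibility claim nor your dimension count goes through as stated.
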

\begin{proof}
By Lemma \ref{lem:delta1 crit}, $X$ is a critical point of $\delta^1_F$, if and only if, 
$\nabla^{\perp,2}_{X}F \llcorner \nabla^{\perp}_{X} F= 0$.
For $F\in \Hdn$ 
generic, the system of equations $\nabla^{\perp,2}_{X}F \llcorner \nabla^{\perp}_{X} F= 0$ has solutions $X \in \Hun^{\times}$ such that $\nabla^{\perp}_{X} F\neq 0$ (as in the example \ref{ex:formeA_2}) and solutions such that $\nabla^{\perp}_{X} F=0$. We analyse separately the two cases:

$\bullet$
If $\nabla_{X}^{\perp}F=0$, then $T_{X}^1(F) = F(X^*) X^d \in \Ver = \sing(\Tan)$ (see Proposition \ref{prop:osc dim sing}). By construction, $R_{X}^2(F) \in \Disc$.
For $F$ generic and $X$ such that $\nabla_{X}^{\perp} F=0$, it is proved 
in Proposition  \ref{prop:critical point d0}, that $R^2_{X}(F)$ is a smooth point of $\disc_{d}$.

As $\delta_F^1(X) = \norm{T^1_{X}(F)}^2 = \norm{F}^2 - \norm{R_{X}^2(F)}^2$ and $X$ is a critical point of $\delta_F^1$, $R_{X}^2(F) \in \Disc$
is a critical point of $\dist^2(F,\Disc)$.

$\bullet$ In the case $\nabla_{X}^\perp F\neq 0$, then  $\nabla_{X}^{\perp, 2} F \llcorner \nabla_X^\perp F = 0$ and   
$R^2_{X}(F) \in \cl C_{d}$ (see Proposition \ref{prop:sing_discr}).

Let us prove that for $F$ generic and $X \in \Hun$ such that $\nabla_{X}^{\perp} F\neq 0$  and $X$ a critical point of $\delta^1_F$,  $R^2_{X}(F) \in \smooth(\cl C_{d})$.
Let $\set{X_0, \ldots, X_{n-1}}$ be a basis of $\Hun$ such that $X= X_\lb$ for some $\lb\in \kk^{n-1}$.
The form $F\in \H_d$ with the critical point $X_{\lb}$  of $\delta^1_F$  and with $\nabla^{\perp}_{X_{\lb}}F \neq 0$ is such that $F= \varphi_0(\lb, H_0, H_1, \ldots, H_d)$ for $H_i\in \hat \H_i$, $H_i^{\lb}= \nabla^{\perp,i}_{X_{\lb}} F$ and 
$H_1^\lb \in \ann_1(H_2^\lb)^{\times}$, i.e. $H_2^{\lb} \llcorner H_1^{\lb}= 0$ with $H_1^{\lb}\neq 0$.
We consider the incidence variety 
$$
\cl G_2 = {\{ (\lb, H_0, H_1, \ldots, H_d) \in \kk^{n-1} \times \hat \H_0\times \cdots \times \hat \H_d \mid 
H_1 \in \ann_1(H_2)^{\times}
\}}.
$$ 
Since $H_2^{\lb} \llcorner H_1^{\lb} = 0$, if and only if,  $H_2 \llcorner H_1 = 0$ and $H_1^\lb\neq 0$, if and only if, $H_1\neq 0$, we deduce that for $F\in \Hdn$ generic, $F \in \varphi_{0}(\cl G_2)$, so that $\zariski{\varphi_{0}(\cl G_2)} = \Hdn$.

As for $H_1\neq 0$, 
$\set{H_2 \in \hat \H_2\mid H_2 \llcorner H_1}$ is a linear space of codimension $n-1$, the fibers of the projection of $\cl G_2$ on $\hat \H_1$ are linear spaces of codimension $n-1$. Thus, $\zariski{\cl G_2}$ is an irreducible variety.
Consider the open subset
$$
\cl G_2'= \{ (\lb,  H_0, \ldots, H_d) \in \cl G_2 \mid \rank H_2 = n-2, 
H_{3} \llcorner H_1^3 \neq 0, \varphi_2(\lb, H_2, \ldots, H_d) \not\in \cl M_{d} \}. 
$$
By Proposition \ref{prop:dual_ver_tan}, 
 $R^2_{X_\lb}(\varphi_0(\lb,H_0, \ldots, H_d))$ is a smooth point of $\cl C_{d}$, for 
 any $(\lb,H_0, \ldots, H_d)\in \cl G_2'$.
 
Example \ref{ex:formeA_2} shows that 
\begin{align}
F&= X_0^d + X_0^{d-1} X_1 + X_0^{d-2} \sum_{i=2}^{n-1} X_i^2  + X_0^{d-3} X_1^3 + \sum_{i=1}^{n-1}a_i X_i^d \\
&=
\varphi_0(\bm 0, 1, X_1, \sum_i^{n-1} X_i^2 , X_1^3, 0, \ldots, 0, \sum_{i=1}^{n-1}a_i X_i^d) 
\end{align}
with $a_i\in \kk$ generic non-zero coefficients is such that $\rank \napp F= n-2$ and $R_{X_0}^2(F) \in \cl C_{d}\setminus \cl M_{d}$.
This implies that $\cl G_2'$ is not empty and thus dense in ${\cl G_2}$ and $\varphi_0(\cl G_2')$ is dense in $\Hdn$. 
Consequently, for $F\in \varphi_0(\cl G_2')$ generic with a critical point $X_\lb$ of $\delta^1_F$ such that $\nabla_{X_\lb}^\perp F \neq 0$, $R^2_{X_\lb}(F) \in \smooth(\cl C_{d})$ is a smooth point of $\cl C_d$.

By Proposition \ref{prop:osc dim sing}, $T_{X_\lb}^1(F) \in \smooth(\Tan)$ since $\nabla^{\perp}_{X_\lb}F \neq 0$ and $T_{X_\lb}^1(F) \not\in \Ver$.

As $\delta_F^1(X_\lb) = \norm{T^1_{X_\lb}(F)}^2 = \norm{F}^2 - \norm{R_{X_\lb}^2(F)}^2$ and $R_{X_\lb}^2(F) \in \cl C_d$  for $X_{\lb}$ a critical point of $\delta_F^1$ such that $\nabla_{X_\lb}^\perp F \neq 0$, we deduce that $R_{X_\lb}^2(F) \in \cl C_d$ is a critical point of $\dist(F,\cl C_d)^2$. This concludes the proof of the proposition.
\end{proof}

\subsection{Counting critical points}
Hereafter, we analyse the number of critical points of the square distance function to the discriminant and 
relate it to the Euclidean Distance Degree of $\Ver$ and $\Tan$.

Let $\C^N$ be endowed with a real (that is, defined over $\R$) positive definite symmetric bilinear form $\langle\cdot,\cdot\rangle$ (thus when restricted to $\R^N$ it defines a scalar product) and denote by $\Vert \cdot \Vert^2$ the square of the associated (real) quadratic form.
For a real variety \BM{$\cl X$}, consider the Euclidean distance minimization problem from $U\in \R^N$,
\begin{align}\label{eq:opt}
\inf\  & \Vert U - \x\Vert^2\\
      & \x \in \cl X(\R)
\end{align}
where $\cl X(\R)\subset \R^N$ is the real locus of $\cl X$.
\emph{The Euclidean Distance Degree} of $\cl X$ is defined as the number 
 \begin{align}\label{eq:EDD}
    \EDD(\BM{\cl X})\ =\ \#\left\{\x\in \smooth(\BM{\cl X})\,:\, \langle\mathbf{u}-\x,\dot\x\rangle=0\ \forall\ \dot\x\in T_{\x} \BM{\cl X}\right\}
 \end{align}
of complex critical points of the optimization problem \eqref{eq:opt} for a generic $\mathbf{u}\in \R^N$.
Such $\x\in \smooth(\BM{\cl X})$ are usually referred to as \emph{ED critical points} (relative to $\mathbf{u}$).

    This invariant has been introduced and systematically studied in \cite{DHOST}.
It has also appeared in \cite{Josse_Pene} as \emph{the normal cycle} of $\BM{\cl X}$.
One can interpret \eqref{eq:EDD} as \emph{the algebraic complexity} of the optimization problem \eqref{eq:opt}, that is, the number of (complex) solutions to the equations defining critical points of the distance function from a general input $\mathbf{u}\in \R^N$ to $\BM{\cl X}(\R)$.

It is shown in \cite{DHOST}[Thm. 5.2], that $\EDD(\BM{\cl X})=\EDD(\BM{\cl X}^{\vee})$, where $\BM{\cl X}^{\vee}$ is the dual variety of $\BM{\cl X}$.

By its definition, $\EDD(\BM{\cl X})$ accounts only for points in the smooth locus of the variety.
It can happen though (and does indeed happen for the discriminant $\disc_{d}$ as we have seen in Example \ref{ex:binary cubic} that the closest point in $\BM{\cl X}(\R)$ to $\mathbf{u}\in \R^N$ from a full-dimensional set of inputs is a singular point of $\BM{\cl X}$.


For $\BM{\cl X}=\disc_{d}$, we have the following result:
\begin{proposition}
For a generic $F\in \Hdn$, the number $\nu_{\delta_F^1}$ of critical points $\delta^1_F$ on $\Sn$ is 
\begin{align}
 \nu_{\delta^1_F}\ = \ 2(\EDD(\Disc) + \EDD(\cl C_{d}))
 = 2(\EDD(\Ver) + \EDD(\Tan))
\end{align}
\end{proposition}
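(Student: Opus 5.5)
The plan is to build an explicit correspondence between the complex critical points of $\delta^1_F$ on $\Sn$ and the ED critical points of $\dist(F,\cdot)^2$ on $\smooth(\Disc)$ and on $\smooth(\cl C_d)$, and to check that it is exactly $2$-to-$1$. The second equality then follows at once from $\EDD(\cl X)=\EDD(\cl X^\vee)$ \cite[Thm. 5.2]{DHOST}, together with Propositions \ref{prop:dual_ver} and \ref{prop:dual_ver_tan}, which identify $\Disc=\Ver^\vee$ and $\cl C_d=\Tan^\vee$.

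\emph{Forward map.} Let $X\in\Sn$ be a critical point of $\delta^1_F$, characterized by Lemma \ref{lem:delta1 crit}. I send it to $R^2_X(F)$. By Proposition \ref{prop:critical point}, for generic $F$:
\begin{itemize}
\item if $\nabla^\perp_XF=0$, then $R^2_X(F)$ is an ED critical point on $\smooth(\Disc)$;
\item if $\nabla^\perp_XF\neq0$, then $R^2_X(F)$ is an ED critical point on $\smooth(\cl C_d)$.
\end{itemize}
The Taylor terms $\nabla^{\perp,i}_X F$ are homogeneous of degree $d-i$ in $X$, and the $i$-th term is multiplied by $X^{d-i}$, so $R^2_X(F)=R^2_{-X}(F)$. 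Also $-X$ is critical whenever $X$ is, because $\delta^1_F$ is even. Hence the map factors through $X\mapsto\{X,-X\}$. All of this must be carried over to complex $X$ with $\scp{X,X}=1$. The arguments of Lemma \ref{lem:delta1 crit} and Proposition \ref{prop:critical point} are algebraic, so I expect this to go through, since $\scp{\cdot,\cdot}$ is a nondegenerate bilinear form on $\Hun$.

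\emph{Injectivity modulo $\pm$.} Suppose $R^2_X(F)=R^2_Y(F)=G$ with $Y\neq\pm X$. Then $G$ is singular at both $X^*$ and $Y^*$, so $G\in\cl M_d$. This contradicts the genericity conclusion of Proposition \ref{prop:critical point}, where the open sets $\cl G_1',\cl G_2'$ exclude $\cl M_d$. So the map is injective on antipodal pairs. Nondegeneracy of the critical points, from the complexified transversality argument of Proposition \ref{prop:unique_min}, ensures there is no multiplicity to worry about.

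\emph{Surjectivity.} This is the main step.
\begin{itemize}
\item For $\Disc$: take $G\in\smooth(\Disc)$, singular at a unique point $X_0^*$, and ED critical. By Proposition \ref{prop:dual_ver}, $F-G\perp T_G\Disc=\vspan{X_0^d}^\perp$, so $F-G=cX_0^d$. Normalize $\scp{X_0,X_0}=1$; here isotropic $X_0$ must be excluded, which I expect to hold generically by a dimension count on the conormal variety. By Taylor orthogonality, $G=R^2_{X_0}(F)$ and $\nabla^\perp_{X_0}F=0$. Then $X_0$ is critical by Lemma \ref{lem:delta1 crit}.
\item For $\cl C_d$: take $G\in\smooth(\cl C_d)$ ED critical. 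By Proposition \ref{prop:dual_ver_tan}, $(T_G\cl C_d)^\perp=X_0^{d-1}\vspan{X_0,W_0}=T_{X_0^{d-1}W_0}\Tan$, where $W_0$ spans the kernel of $\nabla^{\perp,2}_{X_0}G$. So $F-G=X_0^{d-1}(aX_0+bW_0)$. Uniqueness of the Taylor expansion at $X_0$ gives $G=R^2_{X_0}(F)$, $\nabla^{\perp,i}_{X_0}F=\nabla^{\perp,i}_{X_0}G$ for $i\ge2$, and $\nabla^\perp_{X_0}F\propto W_0$. Therefore $\nabla^{\perp,2}_{X_0}F\llcorner\nabla^\perp_{X_0}F=0$, and $X_0$ is critical.
\end{itemize}
Each $G$ comes from exactly the pair $\pm X_0$.

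\emph{Main obstacle.} The hard part is controlling the generic behaviour over $\C$, in three respects:
\begin{itemize}
\item the singular point $X_0$ must be non-isotropic;
\item critical points must be nondegenerate;
\item the two cases must not overlap.
\end{itemize}
I would handle these by incidence-variety dimension counts in the style of Propositions \ref{prop:critical point d0} and \ref{prop:critical point}.
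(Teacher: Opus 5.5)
Your proposal is correct and follows the paper's skeleton. Both arguments use the forward map $X\mapsto R^2_X(F)$ from Proposition \ref{prop:critical point}, fibres $\{X,-X\}$, and duality for the second equality. It differs in two places.

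\emph{Injectivity.} The paper observes that $T^1_X(F)=X^{d-1}\big(F(X^*)X+\nabla^\perp_XF\big)$ determines $X$ up to sign by unique factorization. This needs only $d\ge3$ and $T^1_X(F)\neq0$, i.e.\ $F\notin\Disc$. Your argument through $\cl M_d$ also works, but it rests on the open sets $\cl G_1',\cl G_2'$ inside the proof of Proposition \ref{prop:critical point}, not on its statement. The statement alone does not suffice because $\smooth(\cl C_d)$ can meet $\cl M_d$, for instance at forms with an $A_2$ point and a separate node. The paper's route is therefore cleaner and genericity-free.

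\emph{Surjectivity.} The paper's proof only shows that the image of this $2$-to-$1$ map lies in the two ED critical loci. By itself this yields $\nu_{\delta^1_F}\le 2(\EDD(\Disc)+\EDD(\cl C_d))$. Your surjectivity step supplies the reverse inclusion, which the paper leaves implicit, and is the real gain of your version.

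That step needs a few repairs.
\begin{itemize}
\item Non-isotropy requires no conormal dimension count. If $\scp{X_0,X_0}=0$ and $d\ge3$, then $\nabla_{X_0}(X_0^{d-1}M)=0$ for every $M\in\Hun$. So $F=G+X_0^{d-1}M$ would be singular at $X_0^*$, impossible for $F\notin\Disc$. This settles both cases at once.
\item The equality $X_0^{d-1}\vspan{X_0,W_0}=T_{X_0^{d-1}W_0}\Tan$ is false, since the dimensions are $2$ and $2n-1$. What holds, and all you use, is that this plane equals $(T_G\cl C_d)^\perp$ and lies in $X_0^{d-1}\Hun\subset\Tan$.
\item Proposition \ref{prop:dual_ver_tan} computes $T_G\cl C_d$ only when $G$ has a unique singular point, of type $A_2$. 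An arbitrary point of $\smooth(\cl C_d)$ need not satisfy this. The simplest fix is the bijection $G\mapsto F-G$ of \cite[Thm.~5.2]{DHOST}, so that $F-G=X_0^{d-1}W$ is a smooth ED critical point of $\Tan$. Lemma \ref{lem:tgtOk} then gives $G\perp X_0^{d-1}\Hun$ and $G\perp X_0^{d-2}W\,\Hun$, i.e.\ $\nabla_{X_0}G=0$ and $\nabla^2_{X_0}G\llcorner W=0$. Replace $W$ by its projection to $X_0^\perp$, using $\nabla^2_{X_0}G\llcorner X_0=(d-1)\nabla_{X_0}G=0$. Your Taylor-expansion argument then goes through unchanged.
\item Nondegeneracy plays no role, since both sides count points. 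In any case Proposition \ref{prop:unique_min} is a statement on $\Sn^\R$.
\end{itemize}
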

\begin{proof}
By Proposition \ref{prop:critical point}, for a generic $F$ if $X$ is a critical point of $\delta^1_F$ on $\Sn$, then $R^2_X(F)$ is either critical point of $\dist^2$ to $\smooth(\Disc)$ or a critical point of $\dist^2$ to $\smooth(\cl C_{d})$.
Since $R^2_X(F)=F-T^1_X(F)$ and $T^1_X(F)=F(X^*)X^d+\nabla^\perp_X F X^{d-1}=X^{d-1}(F(X^*)X+\nabla^\perp_X F)$, 
we have that $R^2_X(F)=R^2_Y(F)$ for $X, Y\in \Sn$ if and only if $T^1_F(X)=T^1_F(Y)$ which holds if and only if $X=\pm Y$.
Therefore, $\nu_{\delta^1_F} =2( \EDD(\Disc) + \EDD(\cl C_{d})) = 2(\EDD(\Ver) + \EDD(\Tan))$, where the second equality follows by the projective duality.
\end{proof}
A naive approach for counting critical points of $\delta_F^1$ would be to apply Bézout theorem to the system
\begin{align}\label{eq:parallel}
    (1-d)F(X^*)\nabla_X F+\frac{1}{d}\Vert X\Vert^2 \nabla^2_X F\llcorner \nabla_X F=\lambda^{2d-2} X,\quad [X:\lambda]\in \P^n_{\C}.
 \end{align}
Since critical points of $\delta_F^1$ are isolated (for a generic $F\in \Hdn$), we have
\begin{align}\label{eq:bad}
    \nu_{\delta_F^1}\ =\ \#\left\{X\in \Sn\,:\, \nabla_X\delta_F^1=\lambda X\ \textrm{for some}\ \lambda\in \C\right\}\ \leq\ 2\frac{(2d-1)^n-1}{2d-2},
\end{align}
for their number, where the right-hand side is essentially Bézout theorem (see \cite{FS, CS2013}).
However, \eqref{eq:parallel} has infinitely many solutions given by some points on the isotropic quadric $\cl Q=\{X\in \BM{\P_{\C}\Hun}: \Vert X\Vert^2=0\}$.
This implies that the bound \eqref{eq:bad} is not optimal.
In the next subsection we show that the count \eqref{eq:sum} of the number of critical points of $\delta_F^1$ on $\Sn$ that we obtain via the topological approach is much smaller than the naive bound \eqref{eq:bad} (cf. 
\eqref{eq:CS} and \eqref{eq:final_formula_simpl}).

\subsection{Euclidean Distance Degree of the Veronese variety}

For the discriminant variety $\disc_{d}$ (equivalently, for its projectively dual $\mathcal{V}_{d}=(\disc_{d})^\vee$), the ED degree had been computed by several authors \cite{FS, CS2013, ASS2017}. Hereafter, we recall this classical result and provide yet another proof, involving Euler characteristic techniques that will be used to compute $\EDD(\Tan)$.
It uses the following theorem of \cite{AluffiHarris2018}, see also 
\cite[Theorem 1.2]{MaximRodriguezWang2021}: 
\begin{theorem}[\cite{AluffiHarris2018}]\label{thm:Aluffi}
Let $\BM{\cl X}$ be a smooth irreducible subvariety of $\P_{\C}^{N-1}$, and assume that $\BM{\cl X}\not \subset \cl Q$, where $\cl Q=\{[x_1:\ldots : x_N]\in \P_{\C}^{N-1}; x_1^2+ \cdots + x_N^2 = 0 \}$. Then the Euclidean Distance Degree of the affine cone $C(\BM{\cl X}) \subset \C^{N}$ over $\BM{\cl X}$ is given by  
\begin{align}
\EDD (C(\BM{\cl X}) ) = (-1)^{\dim \BM{\cl X}} \chi (\BM{\cl X}\setminus (\cl Q\cup H)),  
\end{align}
where $H$ is a generic hyperplane of $\P^{N-1}_{\C}$.  
\end{theorem}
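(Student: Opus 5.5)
The plan is to convert ED critical points on the affine cone into critical points of a generic linear function on a smooth affine variety. Their number is then read off from a Lefschetz/Morse-type Euler characteristic count.

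\emph{Step 1: reduction to a linear function.} Let $u\in\R^N$ be generic and $\ell_u=\scp{u,\cdot}$. If $x\in\smooth(C(\cl X))\setminus\{0\}$ is ED critical, then $x\in T_xC(\cl X)$ gives $\scp{u-x,x}=0$. Hence $q(x)=\scp{x,x}\neq 0$ whenever $x\neq 0$, since for $u$ generic no critical point lies on $\cl Q$ (this is checked by a dimension count on the ED correspondence, using $\cl X\not\subset\cl Q$). Writing $x=ty$ with
\[
W=\{y\in C(\cl X): q(y)=1\},
\]
a short computation shows that $x$ is ED critical if and only if $t=\ell_u(y)\neq 0$ and $y$ is a critical point of $\ell_u|_W$.

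The variety $W$ is smooth and affine of dimension $\dim\cl X$, and $y$, $-y$ give the same $x$. Therefore
\[
2\,\EDD(C(\cl X))=\#\operatorname{Crit}(\ell_u|_{W\setminus\{\ell_u=0\}}).
\]
Critical points on $\{\ell_u=0\}$ correspond to $x=0$ and are discarded; for generic $u$ there are none, by a Bertini argument applied to the hyperplane $\{\ell_u=0\}$.

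\emph{Step 2: Morse count.} For generic $u$, the function $\ell_u|_W$ has only nondegenerate (Morse) critical points. This follows by the parametric transversality argument used in Proposition~\ref{prop:unique_min}, or by Bertini applied to the conormal variety of $W$.

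The key claim is that $\ell_u$ has no critical points ``at infinity''. Concretely, take the closure $\overline W\subset\P^N$ and a Whitney stratification of $\overline W\setminus W$ (which lives over $\cl X\cap\cl Q$ and the hyperplane at infinity). For $u$ generic, the hyperplanes $\{\ell_u=c\,x_0\}$ should be transverse to all strata at infinity for all but finitely many $c$, and transverse at $c=0$. Granting this, $\ell_u:W\to\C$ is a locally trivial fibration outside its critical values, with no atypical values coming from infinity. Standard complex Morse theory (each Morse point adds a cell of dimension $\dim W$) then gives
\[
\chi(W)=\chi(W\cap\{\ell_u=0\})+(-1)^{\dim\cl X}\#\operatorname{Crit}(\ell_u|_W).
\]
Since all critical points are off $\{\ell_u=0\}$, this becomes
\[
\#\operatorname{Crit}=(-1)^{\dim\cl X}\chi\big(W\setminus\{\ell_u=0\}\big).
\]

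\emph{Step 3: descent to $\cl X$.} The projection $W\setminus\{\ell_u=0\}\to\cl X\setminus(\cl Q\cup H_u)$ is an unramified $2:1$ cover, where $H_u=\{\ell_u=0\}$ is a generic hyperplane. Its fibre is $\{\pm y\}$, with $y$ normalized by $q(y)=1$. Hence
\[
\chi(W\setminus\{\ell_u=0\})=2\,\chi\big(\cl X\setminus(\cl Q\cup H)\big).
\]
Combining with Step 1 and cancelling the factor $2$ gives
\[
\EDD(C(\cl X))=(-1)^{\dim\cl X}\chi\big(\cl X\setminus(\cl Q\cup H)\big).
\]

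\emph{Main obstacle.} The expected difficulty is the ``no critical points at infinity'' claim in Step 2. This step needs the genericity of $u$ relative to a Whitney stratification of the boundary of $W$, which contains the possibly singular locus $\cl X\cap\cl Q$. I would first try to deduce it from the existence of Whitney stratifications and Kleiman–Bertini transversality of a generic hyperplane pencil to all strata. Failing that, I would invoke the Euler obstruction version of the count (as in \cite{MaximRodriguezWang2021}); there the Euler obstruction equals $1$ because $\cl X$ is smooth.
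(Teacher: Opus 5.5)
\paragraph{Comparison with the source.} The paper does not prove this statement. It quotes it from \cite{AluffiHarris2018} (see also \cite[Theorem 1.2]{MaximRodriguezWang2021}), so there is no in-paper argument to match, and your proposal has to stand as an independent proof. As such it is correct, and it follows a genuinely different route from Aluffi--Harris.

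Their proof is intersection-theoretic. The ED degree is expressed through Chern and Segre classes of $\cl X$, with Milnor and Chern--Schwartz--MacPherson classes accounting for the non-transversal contact of $\cl X$ with $\cl Q$. The Euler characteristic formula is then extracted from the CSM class of $\cl X\setminus\cl Q$.

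You proceed in three steps instead:
\begin{enumerate}
\item You identify ED critical points $x=\ell_u(y)\,y$ with antipodal pairs of critical points of $\ell_u$ on the smooth affine quadric section $W=C(\cl X)\cap\{q=1\}$. The equivalence is exactly $(T_yC(\cl X)\cap y^{\perp})^{\perp}=T_yC(\cl X)^{\perp}+\C y$ together with $\langle u,y\rangle=s$.
\item You count these critical points by Lefschetz--Morse theory for a generic linear function on a smooth affine variety.
\item You descend along the \'etale double cover $W\setminus\{\ell_u=0\}\to\cl X\setminus(\cl Q\cup H_u)$.
\end{enumerate}
All genericity conditions you impose on $u$ are Zariski-open, so a generic real $u$ satisfies them. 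The characteristic-class proof buys explicit Chern/Segre formulas. Your Morse-theoretic proof uses more elementary inputs, and it is the viewpoint that extends, with Euler obstructions as weights, to singular $\cl X$. That extension is Theorem~\ref{thm:Maxim2}, which is what the paper actually uses for $\Tan$.

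\paragraph{A step to repair in Step 2.} You assert transversality of the pencil members $\{\ell_u=c\,x_0\}$ to the strata at infinity only for all but finitely many $c$ and for $c=0$. From this you conclude that $\ell_u|_W$ has no atypical values at infinity, which does not follow as stated. A value $c\neq 0$ where transversality failed could be atypical at infinity. It would then contribute a jump $\chi(\ell_u^{-1}(c))-\chi(\ell_u^{-1}(c'))$, with $c'$ generic, that is not accounted for by Morse points, and the identity $\chi(W\setminus\{\ell_u=0\})=(-1)^{\dim\cl X}\#\mathrm{Crit}$ would break.

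Fortunately the condition is independent of $c$. Take a Whitney stratification of $\overline W$ with $W$ as open stratum. Every stratum $S$ of $\overline W\cap\{x_0=0\}\cong\cl X\cap\cl Q$ lies in $\{x_0=0\}$, so at a point $p$ of the axis $S\cap\{\ell_u=0\}$ one has $d(\ell_u-c\,x_0)|_{T_pS}=d\ell_u|_{T_pS}$. Hence a single Bertini condition on $u$ gives transversality for every $c$ at once: namely, that the hyperplane $\{\ell_u=0\}$ is transverse to the finitely many strata at infinity.

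It follows that $(z,c)\mapsto\ell_u(z)-c\,x_0(z)$ is transverse to the product stratification of $\overline W\times\C$. Thom's first isotopy lemma, applied to the proper projection of its zero locus to $\C$, then shows that $\ell_u|_W$ is locally trivial over every non-critical value, in particular near $0$.

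With this your primary route closes and the fallback is not needed. If by the fallback you mean \cite[Theorem 1.3]{MaximRodriguezWang2021}, then for smooth $\cl X$ (where $Eu_{\cl X}\equiv 1$) that theorem is the statement itself.
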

This theorem yields the Euclidean Distance Degree of $\Ver$ or $\Disc$:
\begin{proposition}\label{prop:EDD_Sigma}
\begin{align}\label{eq:EDD_Sigma}
    \EDD(\disc_{d})\ =\ \EDD(\Ver )\ =\ \frac{(d-1)^n-1}{d-2}.
\end{align}
\end{proposition}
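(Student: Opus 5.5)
We apply Theorem \ref{thm:Aluffi} to the Veronese variety $\Ver\subset\P^{N-1}_{\C}$. It is smooth, irreducible, of dimension $n-1$, and not contained in the isotropic quadric $\cl Q$ of the Bombieri-Weyl form. The first equality $\EDD(\Disc)=\EDD(\Ver)$ is the duality invariance \cite[Thm. 5.2]{DHOST} together with Proposition \ref{prop:dual_ver}, so it only remains to compute $(-1)^{n-1}\chi(\Ver\setminus(\cl Q\cup H))$.

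The first step is to pull everything back to $\P^{n-1}_{\C}$ through the Veronese embedding $\nu_d: [X]\mapsto [X^d]$, which is an isomorphism onto $\Ver$. By \eqref{eq:Bomb2}, or directly from $\scp{X^d,Y^d}_B=\scp{X,Y}^d$, we get $\norm{X^d}^2_B=\scp{X,X}^d$. Hence $\nu_d^{-1}(\cl Q)$ is the smooth quadric $Q_0=\{\scp{X,X}=0\}\subset\P^{n-1}$, taken set-theoretically; the multiplicity $d$ does not affect Euler characteristics. A generic hyperplane $H\subset\P^{N-1}$ pulls back to a generic degree-$d$ hypersurface $D\subset\P^{n-1}$, since $\nu_d^*$ identifies hyperplane sections with the full linear system $|\cl O(d)|$. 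We therefore need
$$\chi\big(\P^{n-1}\setminus(Q_0\cup D)\big)=\chi(\P^{n-1})-\chi(Q_0)-\chi(D)+\chi(Q_0\cap D).$$
By Bertini, $D$ and $Q_0\cap D$ are smooth complete intersections.

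The second step is to evaluate the four terms with Hirzebruch's formula for complete intersections (recalled in the Appendix). We have $\chi(\P^{n-1})=n$. For the smooth quadric, $\chi(Q_0)=n-1$ if $n$ is odd and $n$ if $n$ is even. For the degree-$d$ hypersurface, $\chi(D)=\frac{(1-d)^n-1}{d}+n$. For $Q_0\cap D$, a complete intersection of type $(2,d)$ in $\P^{n-1}$, one computes the coefficient of $h^{n-1}$ in $(1+h)^n\frac{2d h^2}{(1+2h)(1+dh)}$. The cleanest route is to expand by partial fractions, $\frac{2d}{(1+2h)(1+dh)}=\frac{2d}{d-2}\Big(\frac{d}{1+dh}-\frac{2}{1+2h}\Big)$ for $d\neq2$, and use the identity that the $h^{m}$-coefficient of $(1+h)^n/(1+ah)$ equals $\frac{(1-a)^n-\text{(lower terms)}}{\cdots}$ via the standard closed form $\sum_k\binom nk(-a)^{m-k}$.

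Combining the four terms and multiplying by $(-1)^{n-1}$, the parity-dependent contributions from $Q_0$ should cancel against those from $Q_0\cap D$, since $(1-2)^n=(-1)^n$ appears. The terms in $(1-d)^n$ should then collapse to $\frac{(d-1)^n-1}{d-2}$. As a sanity check, for $n=2$ we get $2-2-(2-d)... $, which should give $d$, matching the known binary value $\EDD=d$ from \cite{LEE2016499}.

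The main obstacle is this bookkeeping in the Hirzebruch computation for $Q_0\cap D$, where sign errors and parity cases are easy to make. Two points also need care: justifying that the genericity of $H$ in Theorem \ref{thm:Aluffi} transfers to genericity of $D$, and checking that only the reduced quadric matters. To keep the bookkeeping honest, I would first verify the final identity for $n=2,3$ and then do the general generating-function simplification.
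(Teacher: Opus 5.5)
Your route is the paper's route. The paper applies Theorem~\ref{thm:Aluffi} to $\P\Ver\cong\P^{n-1}$ and notes that $\cl Q$ pulls back to $(\sum x_i^2)^d=0$, so set-theoretically to the smooth quadric $Q_0$. It also notes that a generic hyperplane pulls back to a generic degree-$d$ hypersurface $D$, so the two points you flag as needing care are handled just as you say. The paper then finishes in one line by quoting \eqref{eq:Hirzebruchdiff} with $c=2$ in $\P^{n-1}$.

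You never carry out this last step. The proposal ends at ``should cancel'' and ``should collapse'', and the binomial identity you invoke is never actually stated. The $n=2$ check is also wrong as written: there $\chi(\P^1)=2$, $\chi(Q_0)=2$, $\chi(D)=d$ and $Q_0\cap D=\emptyset$, so you get $(-1)\cdot(2-2-d)=d$. This is not a missing idea, but the computation is the whole content once the setup is in place, so it has to be done.

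It closes with the data you already have. By \eqref{eq:Hirzebruch2} with $c=2$ in dimension $n-3$,
\[
\chi(Q_0\cap D)=n+\frac{d^2\big((-1)^n-1\big)-4\big((1-d)^n-1\big)}{2d(d-2)}.
\]
Insert this, together with $\chi(Q_0)=n+\frac{(-1)^n-1}{2}$ and $\chi(D)=n+\frac{(1-d)^n-1}{d}$, into your inclusion--exclusion. The four $n$'s cancel and the constant terms cancel, leaving
\[
\chi\big(\P^{n-1}\setminus(Q_0\cup D)\big)=\frac{(-1)^n-(1-d)^n}{d-2}.
\]
Multiplying by $(-1)^{n-1}$ gives $\frac{(d-1)^n-1}{d-2}$.

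Contrary to your prediction, the $(-1)^n$ contributions of $\chi(Q_0)$ and $\chi(Q_0\cap D)$ do not cancel. They add up to $\frac{(-1)^n}{d-2}$, and this becomes the $-1$ in the numerator.

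A shorter way to get the identity, avoiding truncated binomial sums, is the paper's generating function in the dimension:
\[
\sum_{m\ge 0}\chi\big(\P^m\setminus(X^{m-1}_c\cup X^{m-1}_d)\big)z^m=\frac{1}{(1+(c-1)z)(1+(d-1)z)}.
\]
Read off its coefficient at $m=n-1$ with $c=2$.
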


\begin{proof}

We apply Theorem \ref{thm:Aluffi} to $\BM{\cl X}$ being the projectivisation of $\Ver$ and $\cl Q$ the isotropic quadratic subvariety ${\Vert f\Vert }_B^2 ={\langle f, f\rangle}_B=0$ of the Bombieri scalar product.  Its pullback to $\C^n$ is given by the equation $(\sum_{i=1}^n x_i^2)^d=0$ so as a set it is the isotropic quadratic subvariety of the standard dot product.  The pullback to $\C^n$ of a generic linear form on  $\C^N$ is a generic form of degree $d$. 

By definition \eqref{eq:Veronese} the projective Veronese variety can be identified with $\P_{\C}^{n-1}= \P\Ver \subset \P^{N-1}$.   
Therefore, by Theorem \ref{thm:Aluffi} and \eqref{eq:Hirzebruchdiff} of Proposition \ref{prop:fromHirzebruch}, see Appendix,  
\begin{align}\label{eq:CS}
\EDD (\Ver) = (-1)^{n-1} \chi(\P^{n-1}  \setminus (X^{n-2}_{2} \cup X^{n-2}_{d}))  = 
\frac {(d-1)^{n} - 1} {d-2}.   
\end{align} 
Here by $X^{n-2}_{i}$ we denote the generic hypersurface of $\P^{n-1}$ of degree $i$.  
\end{proof}

By the biduality theorem (see \cite[Thm. 5.2]{DHOST}) and Proposition \ref{prop:distdisc}, the critical points of the square distance from $f\in \H_{d}$ to $\Ver$ are such that 
$f= T_{\x}^0(f)+ R^1_{\x}(f)$ with $T_{\x}^0(f)= f(\x) (\x, \cdot)^d\in \Ver$ and $\x \in \Sn$ is a critical point of $\delta^0_{f}(\x)= f^2(\x)$ on $\Sn$.
In this case, $\nabla_{\x}^{\perp}f=0$ and $R^1_{\x}(f)= R^2_{\x}(f) = f - T_{\x}^0(f) \in \Disc$ is a smooth point of $\Disc$ for $f$ generic.

In \cite{Kozhasov2018} it is shown that there exists a generic real form $f\in \H_{d}$ relative to which all ED critical points of the distance minimization problem to $\Ver$ are real. 

\medskip

\subsection{Euclidean Distance Degree of the Tangential variety}

To compute the Euclidean Distance Degree of the Tangential, we follow the same approach as for the Veronese variety. We replace Theorem \ref{thm:Aluffi} by Theorem 1.3 of \cite{MaximRodriguezWang2021}:

\begin{theorem}[{\cite[Theorem 1.3]{MaximRodriguezWang2021}}]\label{thm:Maxim2}
Let $\cl X$ be an irreducible subvariety of $\P^{N-1}$, and assume that $\cl X\not \subset \cl Q$, where $\cl Q=\{[x_1:\ldots : x_N]\in \P^N; x_1^2+ \cdots x_N^2 = 0 \}$. Then the Euclidean Distance Degree of the affine cone $C(\cl X) \subset \C^{N}$ over $\cl X$ is given by 
\begin{align}\label{eq:Max2}
\EDD (C(\cl X) ) = (-1)^{\dim \cl X} \chi (Eu_{\cl X}|_{\cl X\setminus (\cl Q\cup H)}),  
\end{align}
where $H$ is a generic hyperplane and $Eu_{\cl X}$ is the local Euler obstruction of $\cl X$.    
\end{theorem}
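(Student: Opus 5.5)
The plan is to reduce the ED degree count to counting critical points of a generic linear function on an affine variety. For such functions the global index formula of Seade--Tib\u{a}r--Verjovsky (in the form used by Maxim--Rodriguez--Wang) says the following. If $Y$ is affine of dimension $m$ and $\ell$ is a generic linear function, then the number of critical points of $\ell$ on $\smooth(Y)$ equals $(-1)^{m}\chi(Eu_Y|_{Y\setminus\{\ell=c\}})$ for generic $c$. This reduction replaces the Aluffi--Harris argument of Theorem \ref{thm:Aluffi}: there the Euler obstruction is identically $1$ and $\chi$ of the complement suffices.

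\textbf{Step 1 (homogenization).} Let $q(x)=\langle x,x\rangle$ and $\ell_u(x)=\langle u,x\rangle$. A point $x\in\smooth(C(\cl X))$, $x\neq 0$, is ED critical for $u$ iff $x-u\perp T_xC(\cl X)$. Since $x\in T_xC(\cl X)$, this forces $q(x)=\ell_u(x)$. A short computation then identifies the ED critical points with the critical points of the degree-zero rational function $\ell_u^2/q$ on $\cl X\setminus(\cl Q\cup H_u)$, where $H_u=\{\ell_u=0\}$. Each critical class $[x]$ lifts to exactly one $x$ with $q(x)=\ell_u(x)$, and $x=0$ is excluded for generic $u$. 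I must also check that no ED critical point lies on $\cl Q$ or $H_u$ for generic $u$; this follows from a dimension count on the ED correspondence, using $\cl X\not\subset\cl Q$.

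\textbf{Step 2 (linearization).} On the affine variety $Y=\cl X\setminus\cl Q$, use the chart $q=1$, i.e.\ the restricted second Veronese map $\nu_2:[x]\mapsto [x\otimes x]/q(x)$. The function $\ell_u^2/q$ becomes the restriction of the linear form $\ell_u\otimes\ell_u$, but this form is not generic among linear forms. So I will pass to the double cover $\tilde Y=\{(x,t): q(x)=t^2\}/\!\sim$, on which $\ell_u/\sqrt q$ is the restriction of a genuinely generic linear function as $u$ ranges over $\C^N$. Since the cover is unramified away from $\cl Q$, critical points upstairs are two-to-one over critical points downstairs. The Euler obstruction is local analytic, so it pulls back, and $\chi$ multiplies by $2$. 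The two factors of $2$ cancel.

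\textbf{Step 3 (index formula).} Apply the generic linear index theorem on the affine variety $\tilde Y$ with the Whitney stratification induced from $\cl X$. This gives
$$\#\mathrm{crit}=(-1)^{\dim\cl X}\chi\big(Eu_{\cl X}|_{\cl X\setminus(\cl Q\cup H)}\big).$$
Here the level set of $\ell_u/\sqrt q$ at a generic value pulls back to a generic hyperplane section $H$ of $\cl X$, after absorbing $H_u$ by constructibility of $\chi$.

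\textbf{Main obstacle.} Step 2 is the delicate point. I need to justify that $\ell_u/\sqrt q$ is generic enough on each stratum of $\cl X\setminus\cl Q$ for the index formula to apply: finitely many Morse critical points, all in the top stratum, and no critical points at infinity or on lower strata. I would handle this by a transversality argument on the conormal varieties of the strata, in the spirit of the proof of Proposition \ref{prop:unique_min}. The argument would show that the set of $u$ for which the graph of $d(\ell_u/\sqrt q)$ meets a conormal of a lower-dimensional stratum, or fails to meet the top conormal transversally, is a proper Zariski-closed subset. Nonproperness at $\cl Q\cup H$ is exactly what the removal of $\cl Q\cup H$ accounts for.
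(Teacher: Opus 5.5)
The paper does not prove this statement. It is quoted from \cite{MaximRodriguezWang2021} and used as a black box, so your argument has to stand on its own.

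\textbf{Steps 1--2 are correct.} Realize $\tilde Y$ as the closed affine variety $Z=C(\cl X)\cap\{q=1\}\subset\C^N$. Then $p\colon x\mapsto[x]$ is a free $\pm$-double cover of $\cl X\setminus\cl Q$, and $Eu_Z=p^*Eu_{\cl X}$. ED critical points for $u$ correspond two-to-one to critical points of $\ell_u|_Z$; this is exactly the antipodal-pair mechanism of $\delta^1_F$ on $\Sn$ in the paper. In particular, the ``main obstacle'' you single out is not one: on $Z$, $\ell_u$ is literally a generic linear form. The index formula therefore applies verbatim, and no conormal transversality argument is needed.

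\textbf{The real gap is in Step 3.} Because $\ell_u$ is odd under $x\mapsto -x$, two things go wrong at a generic level $c\neq 0$:
\begin{enumerate}
\item The set $Z\setminus\ell_u^{-1}(c)$ is not $\pm$-stable, so ``$\chi$ multiplies by $2$'' does not apply to it.
\item The level set $Z\cap\ell_u^{-1}(c)$ maps isomorphically onto $(\cl X\cap D_c)\setminus\cl Q$, where $D_c=\{\ell_u^2=c^2q\}$. This is a quadric section of $\cl X$, not a hyperplane section.
\end{enumerate}
What the index formula actually gives is
\[
2\,\EDD(C(\cl X))=(-1)^{\dim\cl X}\Big(2\chi\big(Eu_{\cl X}|_{\cl X\setminus\cl Q}\big)-\chi\big(Eu_{\cl X}|_{(\cl X\cap D_c)\setminus\cl Q}\big)\Big).
\]
So you still need $\chi(Eu_{\cl X}|_{(\cl X\cap D_c)\setminus\cl Q})=2\chi(Eu_{\cl X}|_{(\cl X\cap H_u)\setminus\cl Q})$.

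This identity is not formal. For $\cl X=\P^3$, a general quadric $D$ has $\chi(D\setminus\cl Q)=4$, whereas $2\chi(H\setminus\cl Q)=2$. It holds only because $D_c$ lies in the pencil spanned by $\cl Q$ and $2H_u$. Constructibility does not give it either. The only level at which your descent works is $c=0$, where $Z\cap\ell_u^{-1}(0)$ is the free double cover of $(\cl X\cap H_u)\setminus\cl Q$. But hyperplanes through the origin are not generic affine hyperplanes of $\C^N$.

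\textbf{The missing input is that $0$ is not in the bifurcation set of $\ell_u|_Z$.} This has two parts.
\begin{enumerate}
\item[(i)] $0$ is not a stratified critical value. A critical point $x$ over a stratum $S$ with $\ell_u(x)=0$ forces $u\perp T_xC(S)=(T_xC(S)\cap x^{\perp})\oplus\C x$, i.e.\ $[u]\in\overline{S}^{\vee}$, which fails for generic $u$.
\item[(ii)] A generic linear function on an affine variety has no atypical values at infinity. This is the same tameness that underlies the Seade--Tib\u{a}r--Verjovsky formula you are quoting.
\end{enumerate}
With (i) and (ii), stratified local triviality over $\C$ minus the critical values identifies the fibres over $c$ and over $0$. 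This gives $\chi(Eu_Z|_{Z\cap\ell_u^{-1}(c)})=2\chi(Eu_{\cl X}|_{(\cl X\cap H_u)\setminus\cl Q})$. After that, your cancellation of the two factors of $2$ and the replacement of $H_u$ by a generic $H$ go through.
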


The local Euler obstruction is equal to $1$ on $\BM{\smooth(\cl X)}$ and is constant on strata of any Whitney stratification of $\cl X$. 

\begin{theorem} \label{thm:EDDtangential}
The Euclidean Distance Degree of the tangential variety \BM{$\Tan$} to the Veronese variety $\Ver$ and of its dual $\cl C_{d}$ is  
\begin{align}\label{eq:final_formula_simpl}
\EDD(\Tan) = \EDD(\cl C_{d})  & = \frac {d} {2 (d-2)^2} \, \big( (2d-3)^{n} - 2 (d-1)^{n} + 1\big)
\end{align}
\end{theorem}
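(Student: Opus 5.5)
We apply Theorem~\ref{thm:Maxim2} to the projective tangential variety $\cl X=\P\Tan\subset\P^{N-1}$, which has dimension $2n-2$. We use the two-strata Whitney stratification $\P\Tan=(\P\Tan\setminus\P\Ver)\sqcup\P\Ver$. By Proposition~\ref{prop:osc dim sing}, $\sing(\Tan)=\Ver$, and both strata are $O(n)$-, indeed $GL(n)$-, homogeneous, so the Whitney conditions hold. The Euler obstruction is $1$ on the open stratum and equals a constant $e_d$ on $\P\Ver$. Writing $\cl U=\P^{N-1}\setminus(\cl Q\cup H)$, additivity of $\chi$ then gives
\begin{align}
\EDD(\Tan)=\chi\big((\P\Tan\setminus\P\Ver)\cap\cl U\big)+e_d\,\chi(\P\Ver\cap\cl U).
\end{align}
The second Euler characteristic is already known from the proof of Proposition~\ref{prop:EDD_Sigma}: it equals $(-1)^{n-1}\frac{(d-1)^n-1}{d-2}$.

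For the first term we use the parametrisation $\Psi:\P^{n-1}\times\P^{n-1}\to\P\Tan$, $([X],[Y])\mapsto[X^{d-1}Y]$. Since $d\ge3$, unique factorisation shows that $\Psi$ is a bijective morphism. It restricts to an isomorphism from the complement of the diagonal $\Delta$ onto $\P\Tan\setminus\P\Ver$, and it maps $\Delta$ onto $\P\Ver$. Hence the first term equals $\chi\big((\P^{n-1}\times\P^{n-1}\setminus\Delta)\cap\Psi^{-1}\cl U\big)$, which we compute as $\chi(\Psi^{-1}\cl U)-\chi(\Delta\cap\Psi^{-1}\cl U)$. The pullbacks are explicit. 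The pullback of $H$ is a generic hypersurface of bidegree $(d-1,1)$. Using the Bombieri--Weyl formulas of the Appendix, the pullback of $\cl Q$ is computed from
\begin{align}
d\,\norm{X^{d-1}Y}_B^2=q(X)^{d-2}\big((d-1)\scp{X,Y}^2+q(X)q(Y)\big),\qquad q(X)=\scp{X,X}.
\end{align}
So $\Psi^{-1}\cl Q=\{q(X)=0\}\cup\{(d-1)\scp{X,Y}^2+q(X)q(Y)=0\}$, the union of divisors of bidegrees $(2,0)$ and $(2,2)$.

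We will compute $\chi$ of the complement in $\P^{n-1}\times\P^{n-1}$ of these three divisors by inclusion--exclusion, using Hirzebruch-type Chern class computations for (generic enough) complete intersections in a product of projective spaces, analogous to Proposition~\ref{prop:fromHirzebruch}. Two of the divisors are not generic, and their intersection is singular along $\{q(X)=\scp{X,Y}=0\}$. For these we plan to compute directly, fibering over $X$. For fixed $X$ with $q(X)\neq0$, the conditions on $[Y]$ are a smooth quadric and a generic hyperplane in $\P^{n-1}$. For $X$ with $q(X)=0$, the $(2,2)$-condition degenerates to the double hyperplane $\scp{X,Y}^2=0$. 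The fibrations are locally trivial over these strata, which makes the Euler characteristics multiplicative and computable. The diagonal term reduces to the Veronese computation.

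The main obstacle is the Euler obstruction $e_d$ of $\Tan$ along $\Ver$. We would first try to compute it from a normal slice: by homogeneity it suffices to work at $X_0^d$. There, a transversal slice is a cone of dimension $n$ whose link can be described using $\Psi$ near the diagonal, where $\Psi$ behaves like a ramified map of order two in the normal direction. One then computes $e_d$ as an Euler characteristic of a complex link, via the Gonzalez-Sprinberg/Lê--Teissier formula. As a cross-check, and as a fallback for pinning down $e_d$ if it depends only on $d$ as homogeneity suggests, we use the case $n=2$. There the target formula gives $\EDD=d$, in agreement with Lee--Sturmfels for forms with a $(d-1)$-fold root. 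Finally we sum the contributions and simplify to $\frac{d}{2(d-2)^2}\big((2d-3)^n-2(d-1)^n+1\big)$. The equality with $\EDD(\cl C_d)$ follows from duality (Proposition~\ref{prop:dual_ver_tan} and \cite[Thm.~5.2]{DHOST}).
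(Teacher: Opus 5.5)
Your framework is the paper's: Theorem~\ref{thm:Maxim2} with the two-orbit stratification, the homeomorphism $\Psi$, the pullback divisors $\{q(X)=0\}$ and $\{F_2=0\}$ with $F_2=q(X)q(Y)+(d-1)\scp{X,Y}^2$, a generic bidegree-$(d-1,1)$ divisor $\{\sum_i f_i(X)Y_i=0\}$, and the diagonal reducing to the Veronese count. However, the step that carries all the difficulty fails as you state it.

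For $q(X)\neq 0$, the fibre of $\{F_2=0\}\cap\Psi^{-1}H$ over $X$ is the smooth quadric $\{Y^TA(X)Y=0\}$, with $A(X)=q(X)I+(d-1)XX^T$, cut by the hyperplane $\{\sum_i f_i(X)Y_i=0\}$. That hyperplane is determined by $X$ and is not generic. It is tangent to the quadric exactly when $\f(X)^TA(X)^{-1}\f(X)=0$, i.e.\ on the degree-$2d$ hypersurface $\cl F=\{(\sum x_i^2)(\sum f_i^2)+\frac{1-d}{d}(\sum x_if_i)^2=0\}\subset\P^{n-1}$. For every $\f$ this set is nonempty and not contained in $\tilde Q_1$. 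Over $\cl F\setminus\tilde Q_1$ the fibre is a quadric with one node, of Euler characteristic $\chi^{n-3}_2-(-1)^{n-3}$. So the fibration is not locally trivial there, and a correction term $-(-1)^{n-3}\chi(\cl F\setminus\tilde Q_1)$ appears.

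Worse, $\cl F$ is itself singular for generic $\f$. Set $h_1=\sum x_i^2$, $h_2=\sum f_i^2$, $h_3=\sum x_if_i$. Then $\cl F$ is singular along $\{h_1=h_2=h_3=0\}$, where it is locally $h_1h_2+\frac{1-d}{d}h_3^2=0$. Computing $\chi(\cl F)=\chi^{n-2}_{2d}-\chi^{n-4}_{2,d,2d-2}+\chi^{n-5}_{2,d,2d-2,2d}$ requires proving transversality of the $h_i$ and then applying the Parusi\'nski--Pragacz formula. These are exactly the terms that produce $(2d-3)^n$ in the answer: in Hirzebruch's generating function, the factor $1/(1+(2d-3)z)$ comes only from $\deg h_2=2d-2$. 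So a count using locally trivial fibrations cannot reach the stated formula. Already for $n=2$ it gives $\chi\bigl(\P\Tan\setminus(\cl Q\cup H)\bigr)=0$ instead of $2d$ (the $2d$ points of $\cl F$), which with the correct Euler obstruction would force $\EDD=-d$.

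You also leave the Euler obstruction $e_d$ undetermined, and your fallback does not work. Along $\P\Ver$ one has $Eu_{\P\Tan}=1+(-1)^n$. This depends on the parity of $n$, not on $d$, and vanishes for odd $n$. So calibrating it from $n=2$, where it equals $2$, gives the wrong value in every odd dimension.

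Your primary plan does work and is the paper's argument. The normal slice to $\Ver$ at $X_0^d$ has dimension $n-1$, not $n$. It is parametrized by $K\in X_0^{\perp}$, with leading term a nonzero multiple of $X_0^{d-2}K^2$. A generic linear form pulls back to a nondegenerate quadratic form in $n-1$ variables plus higher-order terms. Hence the complex link is an $A_1$ Milnor fibre, with Euler characteristic $1+(-1)^{n}$. This equals the Euler obstruction because the slice has an isolated singularity.
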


It gives the following formulae for  $n\le 6$:
\begin{center}
\begin{tabular}{c|l}
         n & $EDD(\Tan)$ \\ \hline
         $1$ & $d$ \\
         $2$ & $3\, d^2 -3 d$\\
         $3$ & $7\,d^3 - 16\,d^2 + 10\,d$\\
         $4$ & $15\,d^4 - 55\,d^3 + 70\,d^2 - 30\,d$\\
         $5$ &$31\,d^5 - 158\,d^4 + 309\,d^3 - 272\,d^2 + 91\,d$\\
         $6$ & $63\,d^6 - 413\,d^5 + 1099\,d^4 - 1477\,d^3 + 1001\,d^2 - 273\,d$
\end{tabular}
\end{center}


\begin{proof}
The main differences from the proof of Proposition \ref{eq:EDD_Sigma} are that  the projectivisation $\P \Tan$ of $\Tan$ is singular and it can  be parameterised by $\P_{\C}^{n-1} \times \P_{\C}^{n-1}$, rather than  by $\P_{\C}^{n-1}$ as in the case of the Veronese variety. Consequently,  the inverse image of the isotropic quadric $\cl Q$ has a much more complicated structure than that in the case of the Veronese variety.  Furthermore, the  parameterisation 
\begin{align}\label{eq:Psi}
\Psi : \P_{\C}^{n-1} \times \P_{\C}^{n-1} \to \P \Tan, \quad \Psi(X, Y) = X^{d-1} Y
\end{align} 
of the projectivisation $\P \Tan$,  is bijective if $d\ge 3$, but not an isomorphism.
 
The singular set of $\P \Tan$ is the projective Veronese variety, the image of the diagonal $\Delta \subset \P_{\C}^{n-1} \times \P_{\C}^{n-1}$.    
Thus $\P \Tan$ admits a stratification $\BM{( \smooth(\P \Tan),   \sing(\P \Tan))} = (\P \Tan \setminus \P \Ver, \P \Ver)$. 
The group $G=\P \rm{Gl}(n)$ acts on $\P_{\C}^{n-1} \times \P^{n-1}$ preserving 
$\Delta$ and this action commutes with the action of $G$ on $\P_{\C}^{N-1}$ if we identify $\C^{N}$ with $\Sym^d (\C^{n})$. Moreover this action is transitive on $\Delta$. Therefore for any two points $p_i \in \P \Ver, i=1,2$,  the germs $(\P_{\C} ^{N-1}, \P  \Tan, \P \Ver, p_i)$ are isomorphic. In particular, this stratification is Whitney.

The Euler obstruction of $\P \Tan$ at the points of $\P \Ver$ is by\eqref{rem:Eu_obstruction} equal to $1+(-1)^n$.    Therefore, the formula  \eqref{eq:Max2} reads 
\begin{align}\label{eq:Max3} \notag 
\EDD (\Tan ) & = (-1)^{\dim  \P \Tan} \big ( \chi ( \smooth(\P \Tan)\setminus (\cl Q\cup H)) + (1+(-1)^n) \chi ( \sing(\P \Tan)\setminus (\cl Q\cup H))
\big ) \\ 
& =   \big ( \chi ( (\P \Tan)\setminus (\cl Q\cup H))  + (-1)^n \chi (\P \Ver\setminus (\cl Q\cup H)), \\ \notag 
& =   n^2 - \chi ((\P \Tan) \cap (\cl Q\cup H)) - \EDD (\Ver),  
\end{align}
where $H$ is a generic hyperplane of $\P_{\C}^{N-1}$ and $\cl Q$ is the isotropic quadric.
Because $\Psi$ of \eqref{eq:Psi} is a homeomorphism 
\begin{align}
\chi ((\P \Tan) \cap (\cl Q\cup H)) = \chi (\Psi^{-1}(\cl Q \cup H).
\end{align}
The Euler characteristic $\chi ((\P \Tan) \cap (\cl Q\cup H))$ will be computed in 
Lemma \ref{lem:chiQcupH}.  Using this lemma we have 
\begin{align}
\EDD (\Tan ) &= n^2 - \big (2n \chi^{n-2}_{2} - (\chi^{n-2}_{2})^2  \big ) 
 +(-1)^{n} 
\big (\chi^{n-2}_{2d} - \chi^{n-4}_{2,d,2d-2} + \chi^{n-5}_{2,d,2d-2, 2d} - \chi^{n-3}_{2,d} \big ) \\ \notag 
 &  \quad + (-1)^ {n} \bigl ( n - \chi^{n-2}_{2} -\chi^{n-2}_{d} +  \chi^{n-3}_{2,d})  \\ \notag 
 & = (n - \chi^{n-2}_{2})^2    
 +(-1)^{n} 
\big (  n + \chi^{n-2}_{2d} - \chi^{n-2}_{2} -\chi^{n-2}_{d} - \chi^{n-4}_{2,d,2d-2} + \chi^{n-5}_{2,d,2d-2, 2d} \big ) 
\\ \notag 
& = (-1)^{n} 
\big ( \chi^{n-2}_{2d}  -\chi^{n-2}_{d} - \chi^{n-4}_{2,d,2d-2} + \chi^{n-5}_{2,d,2d-2, 2d} \big ) ,
\end{align}
where, as in Appendix, $\chin{n}{d_1, \ldots , d_k}$ denotes the Euler characteristic of a smooth complete intersection $X^n_{d_1, . . . , d_k} \subset \P^{n+k}$ of multidegree  $d_1, \ldots , d_k$.  
The last equality follows from Corollary \ref{cor:d=2}. Simplifying the above formula (using a computer algebra system), we get \eqref{eq:final_formula_simpl}. 
\end{proof}

Now we complete the details of the proof of Theorem \ref{thm:EDDtangential}. 
Because $H$ is chosen as a generic hyperplane its inverse image by $\Psi$, denoted by $H_{d-1,1}$, is the zero set of a generic form of bidegree $(d-1,1)$:
\begin{align}\label{eq:H1}
\sum _{i=1}^n f_i(x) v_i = 0 .
\end{align}
Then the set $\{x\in \P_{\C}^{n-1}; f_i(x)=0, i=1, \ldots, n\}$ is empty.
The pull-back of the Bombieri form to $\P_{\C}^{n-1}\times \P_{\C}^{n-1}$ is   
\begin{align*}
& \frac 1 {d!} \big ( \sum x_i \partial_{i} \big)^{d-1} \big ( \sum v_i \partial_i \big) \, 
\big ( \sum x_i t_i \big)^{d-1} \big ( \sum v_i t_i \big) = \\ 
& = \frac {1}{d} \big (\sum x_i^2 \big )^{d-2}\Big ( \big (\sum x_i^2 \big )\big (\sum v_i^2 \big ) + (d-1)\big (\sum x_i v_i \big )^2 \Big ) = \frac 1 d F_1(x) F_2(x,v),
\end{align*}
where $ F_1(x) = (\sum x_i^2 \big )^{d-2}$ and we may express $F_2(x,v)$ as 
\begin{align}
F_2(x,v) & = \Big ( \big (\sum x_i^2 \big )\big (\sum v_i^2 \big ) + (d-1)\big (\sum x_i v_i \big )^2 \Big )  = v^T A(x) v, \\ & \quad \text{ where }
A(x) = \sum x_i^2I + (d-1) x x^T . 
\end{align}

Denote $Q_1 = F_1^{-1} (0)$, $Q_2 = F_2^{-1} (0)$ considered as subsets of 
$\P_{\C}^{n-1}\times \P_{\C}^{n-1}$. Note that $Q_1 = \tilde Q_1 \times \P_{\C}^{n-1}$ , where $\tilde Q_1$ is the isotropic quadric of the first factor $\P_{\C}^{n-1}$.  The following lemma follows from the form of the matrix $A(x)$. 

\begin{lemma}\label{lem:singQ2}
The fibres of the projection of $Q_2$ onto the first factor are either nonsingular quadrics, if 
$\sum x_i^2\ne 0$, or double hyperplanes otherwise.  
\end{lemma}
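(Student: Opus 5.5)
The plan is to fix $x\in\P^{n-1}_{\C}$ and study the fibre $Q_2\cap(\{x\}\times\P^{n-1}_{\C})=\{v: v^TA(x)v=0\}$. This is the projective quadric of the symmetric matrix $A(x)=s\,I+(d-1)\,xx^T$, where $s=\sum x_i^2$. Its type is governed by the rank of $A(x)$: rank $n$ gives a nonsingular quadric, and rank $1$ gives a double hyperplane. So the lemma reduces to computing the rank of $A(x)$ in the two cases $s\neq 0$ and $s=0$.

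\emph{Case $s\neq 0$.} We have $xx^T x = s\,x$, so $x$ is an eigenvector of $A(x)$ with eigenvalue $s+(d-1)s=ds$. On the complement of $\C x$ we need care, because over $\C$ the space $x^\perp=\{w:\sum x_iw_i=0\}$ need not be a complement in general. Here, however, $x^Tx=s\neq0$, so $\C^n=\C x\oplus x^\perp$. Every $w\in x^\perp$ satisfies $A(x)w=s\,w$. Hence $A(x)$ is diagonalizable with eigenvalues $ds$ (multiplicity one) and $s$ (multiplicity $n-1$). Since $d\geq 3$ and $s\neq 0$, all eigenvalues are nonzero, so $\det A(x)=d\,s^n\neq 0$. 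Alternatively, apply the matrix determinant lemma: $\det(sI+(d-1)xx^T)=s^{n-1}(s+(d-1)s)$. Thus the fibre is a nonsingular quadric.

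\emph{Case $s=0$.} Here $A(x)=(d-1)\,xx^T$, so
\[
F_2(x,v)=(d-1)\Big(\sum x_iv_i\Big)^2 .
\]
Because $x\neq 0$ and $d\ge 3$, this is the square of a nonzero linear form in $v$. Its zero set is therefore the hyperplane $\{\sum x_iv_i=0\}$ counted twice, i.e.\ a double hyperplane.

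The only subtle point is the first case. Over $\C$ one must not assume an orthogonal decomposition automatically, since $x$ could be isotropic. The determinant identity avoids this issue entirely, and it also treats both cases at once: $\det A(x)=d\,s^n$ vanishes exactly on the isotropic quadric $\tilde Q_1$, where the rank drops to $1$.
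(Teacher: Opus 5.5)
Your proof is correct and follows the same route as the paper, which only remarks that the lemma ``follows from the form of the matrix $A(x)=\sum x_i^2\, I+(d-1)\,xx^T$''. Your computation $\det A(x)=d\,(\sum x_i^2)^n$, together with the rank-one form $A(x)=(d-1)\,xx^T$ on $\sum x_i^2=0$, is exactly that unstated verification; the paper also later uses the explicit inverse $A(x)^{-1}=(\sum x_i^2)^{-2}B(x)$, which gives invertibility just as quickly.
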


Let us now study the intersection of $Q_2 \cap H_{d-1,1}$ over the set $\sum x_i^2\ne 0$.  For $x$ fixed this is the intersection of a nonsingular quadric and a hyperplane.  Such an itersection is either nonsingular or has one isolated non-degenerate singularity (i.e. with the Milnor number equal to $1$).  

\begin{lemma}\label{lem:singQ2capH}
The set of $x \in \P_{\C}^{n-1}\setminus \tilde Q_1$ such that $Q_2 \cap H_{d-1,1}$ is singular over $x$ is given by the equation 
\begin{align}
\f(x)^T B(x) \f(x) =0, 
\end{align}
where $B(x):= \sum x_i^2I + \frac {1-d} d x x^T $, provided  $\f=(f_1, \ldots, f_n)$ is chosen sufficiently generic.  
\end{lemma}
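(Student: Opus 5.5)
Fix $x\in\P^{n-1}_{\C}\setminus\tilde Q_1$, so $A(x)$ is invertible and the fibre of $Q_2$ over $x$ is the nonsingular quadric $\{v: v^TA(x)v=0\}$ in $\P^{n-1}_{\C}$ (Lemma \ref{lem:singQ2}). The fibre of $H_{d-1,1}$ over $x$ is the hyperplane $\{v:\f(x)^Tv=0\}$, which is well defined because the $f_i$ have no common zero. The plan is to use the classical criterion for a smooth quadric cut by a hyperplane: the intersection $\{v^TAv=0,\ \ell^Tv=0\}$ is singular exactly when the hyperplane is tangent to the quadric. This happens if and only if $\ell^TA^{-1}\ell=0$, i.e.\ $\ell$ lies on the dual quadric. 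Indeed, a singular point $v$ requires $Av=\mu\ell$ with $\ell^Tv=0$. Then $v=\mu A^{-1}\ell$ with $\mu\neq0$, and so $\ell^TA^{-1}\ell=0$. The converse follows by taking $v=A^{-1}\ell$. Applied with $\ell=\f(x)$, the fibre of $Q_2\cap H_{d-1,1}$ over $x$ is singular iff $\f(x)^TA(x)^{-1}\f(x)=0$. Since $\f(x)\neq0$, this singular point is the isolated nondegenerate one mentioned before the lemma.

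Next I will compute $A(x)^{-1}$ by Sherman--Morrison. Write $s=\sum x_i^2\neq0$, so $A=sI+(d-1)xx^T$, and note $x^Tx=s$. Then
\begin{align}
A(x)^{-1}=\frac1s\Big(I-\frac{(d-1)xx^T}{s+(d-1)s}\Big)=\frac1s\Big(I-\frac{d-1}{ds}xx^T\Big)=\frac1{s^2}\Big(sI+\frac{1-d}{d}xx^T\Big)=\frac{B(x)}{s^2}.
\end{align}
One checks this directly: $(sI+(d-1)xx^T)(sI+\tfrac{1-d}{d}xx^T)=s^2I+xx^Ts\big((d-1)+\tfrac{1-d}{d}+\tfrac{(d-1)(1-d)}{d}\big)$, and the bracket is $(d-1)(1-\tfrac1d-\tfrac{d-1}{d})=0$. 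Since $s\neq0$, the condition $\f^TA^{-1}\f=0$ is therefore equivalent to $\f(x)^TB(x)\f(x)=0$, as claimed.

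The only delicate point is the role of genericity of $\f$. The pointwise equivalence above uses only $\f(x)\neq0$, which holds for generic $\f$. Genericity is presumably also invoked so that this equation defines an honest hypersurface (of degree $2(d-1)+2$) that is not identically zero on $\P^{n-1}\setminus\tilde Q_1$, and so that the singular fibres have exactly one nondegenerate singularity. I would dispose of this by noting that $B(x)$ is nondegenerate off $\tilde Q_1$, so $\f^TB\f\not\equiv0$ for generic $\f$. This is a routine transversality remark rather than a real obstacle; the main step is the dual-quadric computation.
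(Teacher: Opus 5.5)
Your proof is correct and follows the paper's argument. The paper likewise reduces singularity of the fibre to $\f(x)=\lambda A(x)v$, uses $A(x)^{-1}=(\sum x_i^2)^{-2}B(x)$, and checks whether the unique candidate $v=A(x)^{-1}\f(x)$ lies on the hyperplane, which amounts to $\f(x)^TB(x)\f(x)=0$; your explicit check of the inverse formula, which the paper only asserts, and of the converse direction are welcome additions.
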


\begin{proof}
Consider the fibres of projection of $Q_2 \cap H_{d-1,1} \subset \P_{\C}^{n-1} \times \P_{\C}^{n-1}$ onto the first factor $\P_{\C}^{n-1}$. 
Such a fibre is singular at $(x,v)\in \P_{\C}^{n-1} \times \P_{\C}^{n-1} $ if the derivatives 
$\partial_v F_2 = 2 A(x) v$ and $\partial_v (\sum_i f_iv_i) = \f$ are proportional, i.e. if there is a non-zero $\lambda$ such that $\f(x) = \lambda A(x) v$. 

If  $\sum x_i^2\ne 0$ then $A(x)$ is invertible and $(\sum x_i^2)^{-2} B(x)$ is its inverse.  In this case we have to verify whether the unique  possible candidate for $v$, given by: 
 \begin{align}
v = A(x)^{-1} \f(x) = (\sum x_i^2)^{-2} B(x) \f(x)    
\end{align}
satisfies $(x,v)\in H_{d-1,1}$ that is  $\sum_i f_iv_i = \f(x)^T B(x) \f(x) =0$. (Equivalently we may check whether  
$(x,v)\in Q_2$ that is $v^TA(x) v = \f(x)^T B(x)^T A(x) B(x) \f(x) = (\sum x_i^2 ) \f(x)^T B(x) \f(x) =0$.)  
\end{proof}

Let $\AUX :=\{F=0\} \subset \P^{n-1}$, where 
\begin{align}\label{eq:functionF}
F(x):= \f(x)^T B(x) \f(x) = (\sum x_i^2)  (\sum f_i^2) + \frac {1-d} d  (\sum x_i f_i)^2 .  
\end{align}
 If $F(x)\ne 0 $  (and $\sum x_i^2 \ne 0 $) then, by the above, the fiber of the projection of $Q_2 \cap H_{d-1,1}$ on the factor factor  $\P^{n-1}$ is nonsingular, that is a nondegenerate quadric $\BM{Q^{n-3}_2}$ of Euler characteristic $\chi^{n-3}_2 = \chi (\BM{Q}^{n-3}_2)$.  If the fiber is singular, this is a singular quadric with single single, isolated, and forcibly nondegenerate singularity, i.e. with Milnor number $1$.  Therefore, by \cite{Parusinski1988}, see also \cite{PP}, the Euler characteristic of this fiber equals $\chi^{n-3}_2 - (-1)^{n-3}$.  Therefore 
\begin{align}\label{eq:formula1}
\chi ((Q_2 \setminus Q_1) \cap H_{d-1,1}) = \chi (\P^{n-1} \setminus \tilde Q_1) \chi^{n-3}_2 - (-1)^{n-3} \chi(\AUX \setminus 
\tilde Q_1 ).  
\end{align}

The hypersurface $\AUX = F^{-1}(0)$ could be singular even if $\f=(f_1, \ldots , f_n)$ is chosen generic. Our goal is to compute $\chi(\AUX\setminus \tilde Q_2)$ for $\f$ generic.  
Write 
$f_i= \sum_{|\alpha|=d-1} f_{i,\alpha} x^\alpha$ and consider $F $ 
as a function of $x \in \P^{n-1}$ and $(f_{i,\alpha}; i=1, \ldots n; |\alpha|=d-1)\in \P^M $, where $M=n \binom{n-d-2}{d-2}-1$. We compute the singular part of its zero set $F^{-1} (0)=\AUX \subset \P_{\C}^{n-1}\times \P^M$ by considering  
\begin{align}
& \frac {\partial F} {\partial {f_{j,\alpha}}} = 2\big ((\sum x_i^2)  f_j  + \frac {1-d} d  (\sum x_i f_i)x_j  \big ) x^\alpha ,\\
& \frac {\partial F} {\partial {x_j}} = \sum_{i,\alpha}  \frac {\partial F} {\partial {f_{j,\alpha}}} f_{j,\alpha} \alpha_j x^{ \alpha - e_j} +  2\big ( (\sum f_i^2)x_j + \frac {1-d} d  (\sum x_i f_i)f_j \big ) .  
\end{align}
If all $\frac {\partial F} {\partial {f_{j,\alpha}}}$ are zero and $F=0$ then $\sum x_i^2= \sum x_i f_i=0$.  Indeed, suppose this is not the case.  Then $\f$ and $x$ are proportional: $\f=\lambda x$, and $F(x,\f)= \lambda ^2 (\sum x_i^2)^2 ( 1 + \frac {1-d} d  )$ that implies first $\sum x_i^2=0$ and then $ \sum x_i f_i=0$, that contradicts the assumption.  Moreover, if we suppose that all partial derivatives of $F$ with respect to $x_j$ are zero, we can conclude that $\sum f_i^2=0$.  Denote 
\begin{align}
h_1:= \sum x_i^2, h_2:= \sum f_i^2(x), h_3 := \sum x_i f_i(x) .
\end{align} 
By the above computation the singular part of $\AUX$ equals $\{ h_1(x)= h_2(x) = h_3(x) = 0\}$.  
Then by Sard's Theorem, for $\f$ generic,  
\begin{align}
\sing(\AUX) = \{ h_1(x)= h_2(x) = h_3(x) = 0\} . 
\end{align}

\begin{lemma}\label{lem:tranversality}
If $\f$ is sufficiently generic then the set of points $x\ne 0$ such that $x$ is parallel to $\f(x)$ and $\sum x_i^2 =0$ is empty. 
\end{lemma}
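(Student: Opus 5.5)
We prove Lemma \ref{lem:tranversality} by a standard incidence-variety dimension count over the parameter space of coefficients $(f_{i,\alpha})$ of $\f=(f_1,\ldots,f_n)$, with each $f_i$ a form of degree $d-1$. Consider
\begin{align}
\cl I = \{ (x,\f) \in \P_{\C}^{n-1}\times \P^M \;:\; \textstyle\sum x_i^2 = 0,\ \f(x)\wedge x = 0\},
\end{align}
where $\f(x)\wedge x=0$ means that $\f(x)$ is proportional to $x$, possibly zero; the case $\f(x)=0$ is allowed so that $\cl I$ is closed. We will show $\dim \cl I < M$. Then the projection of $\cl I$ onto $\P^M$ is a proper closed subset, and any $\f$ outside it satisfies the lemma. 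The case $\f(x)=0$ is also excluded for generic $\f$ by the remark after \eqref{eq:H1}, but the count below covers it anyway.

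First we fix $x\neq 0$ with $\sum x_i^2=0$ and look at the fiber of $\cl I$ over $x$. The evaluation map $\f\mapsto \f(x)\in\C^n$ is linear and surjective, since some coordinate $x_j\neq 0$ and we may take $f_i=c_i x_j^{d-1}$. The condition ``$\f(x)\in \C\,x$'' is therefore the preimage of a line in $\C^n$ under a surjective linear map. It is a linear subspace of codimension $n-1$ in the affine space of coefficients, hence of codimension $n-1$ in $\P^M$.

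Next we count dimensions. The base $\{\sum x_i^2=0\}\subset \P^{n-1}$ is the isotropic quadric $\tilde Q_1$, of dimension $n-2$. By the fiber theorem (\cite[Thm. 1.26]{Shafarevich2013}, as used in Proposition \ref{prop:sing_discr}),
$$\dim \cl I \le (n-2) + (M-(n-1)) = M-1 < M.$$
So the first projection of $\cl I$ is not dominant, and for $\f$ in a nonempty Zariski open subset of $\P^M$ no such $x$ exists.

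The only point that needs care is that the fiber codimension is exactly $n-1$ at every point of the base, with no drop. This is why we argue surjectivity of evaluation at each fixed $x$ rather than only generically. Surjectivity holds uniformly in $x$ because $x^{d-1}_j$-type monomials always separate points. If one insists on $\f(x)\neq 0$ in the statement, the same count applies to the open subset of $\cl I$ where this holds.
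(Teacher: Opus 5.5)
Your proof is correct, but it is organized differently from the paper's. The paper argues chart by chart. On $\{x_k\neq 0\}$ it writes the parallelism condition as the $n-1$ equations $f_ix_k-f_kx_i=0$, $i\neq k$. It then invokes Sard's theorem to say that for generic $\f$ these hypersurfaces, together with $\sum x_i^2=0$, are smooth and transverse, so $n$ transverse hypersurfaces in an $(n-1)$-dimensional chart cannot meet; this is repeated for every $k$. You replace this by a single global incidence variety $\cl I\subset\P^{n-1}\times\P^M$ and the fiber-dimension theorem. Your key input is that $\f\mapsto\f(x)$ is surjective at every $x\neq 0$, so every fiber is a linear space of codimension exactly $n-1$.

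Both arguments rest on the same fact: your pointwise surjectivity is exactly the submersion hypothesis that the paper's use of Sard (parametric transversality) needs but leaves implicit. Your version states it and needs no charts. It yields a Zariski-open set of admissible $\f$ directly from properness of the projection, and it handles the degenerate case $\f(x)=0$ uniformly.

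It also makes clear that only pointwise surjectivity is used, so it transfers verbatim to the family that actually arises from a hyperplane $H$ in the proof of Theorem~\ref{thm:EDDtangential}. By \eqref{eq:eval3}, pulling back $\langle G,\cdot\rangle_B$ by $\Psi$ gives $f_i=\frac{1}{d}D_iG$, whose coefficients are not independent. Even so, $G\mapsto (D_1G(x),\ldots,D_nG(x))$ is still surjective onto $\C^n$ for each $x\neq 0$.

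One minor slip: the projection of $\cl I$ onto $\P^M$ is the second projection, not the first.
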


\begin{proof}
 Over the open subset of $ \P^{n-1}$ given by $x_k\ne 0$ the intersection of the varieties  $f_i x_k-f_k x_i =0$, $i\ne k$ and $\sum x_i^2 =0$ is empty because by Sard  these varieties are smooth and transverse, provided $\f$ is generic.  Now we apply this argument for every $k$.  
\end{proof} 

\begin{corollary}\label{cor:h_i}
If $\f$ is sufficiently generic then the hypersurfaces $h_i=0$ of $\P^{n-1}$ are smooth and transverse. 
\end{corollary}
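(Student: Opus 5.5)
The plan is a parametric transversality argument (as in the proof of Proposition \ref{prop:unique_min}, via \cite[Thm. 2]{LeLoi2006Density} or its algebraic analogue by Sard/Bertini). The parameter is $\f=(f_1,\ldots,f_n)$, ranging over $n$-tuples of forms of degree $d-1$. The hypersurface $h_1=\sum x_i^2=0$ is the fixed smooth isotropic quadric $\tilde Q_1$, so the whole statement concerns how $h_2=\sum f_i^2$ and $h_3=\sum x_i f_i$ sit with respect to $\tilde Q_1$ and to each other. Concretely, I will work on the affine cone $\C^n\setminus\{0\}$, or equivalently in the charts $x_k\neq 0$ with $x_k=1$, where projective smoothness and transversality become statements about ranks of differentials.

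For fixed $x\neq 0$, consider the evaluation map $\mathcal F:(x,\f)\mapsto (h_2(x,\f),h_3(x,\f))\in\C^2$. Its differential in the $\f$-direction $\bm g=(g_1,\dots,g_n)$ is
$$
D_{\bm g}h_2 = 2\sum_i f_i(x)\,g_i(x),\qquad D_{\bm g}h_3=\sum_i x_i\, g_i(x).
$$
The map $\bm g\mapsto \bm g(x)\in\C^n$ is surjective: take $g_i=c_i\ell^{d-1}$ with $\ell(x)\neq0$. Hence $D_{\f}\mathcal F$ has rank $2$ exactly when $\f(x)$ and $x$ are linearly independent. It has rank at least $1$ as soon as one of the two relevant components is nonzero; for $h_3$ alone this always holds since $x\neq 0$, and for $h_2$ alone it holds when $\f(x)\neq0$.

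The main step is to show that, for generic $\f$, the bad locus where $\f(x)$ is parallel to $x$ never meets the relevant zero sets. I would first check that the $n$ forms $f_i$ have no common zero in $\P^{n-1}$ for generic $\f$, so $\f(x)\neq 0$ everywhere. Then, on $\tilde Q_1$, Lemma \ref{lem:tranversality} excludes $\f(x)=\lambda x$. Off $\tilde Q_1$, suppose $\f(x)=\lambda x$ with $\lambda\neq0$ at a point where $h_3=0$. Then $h_3=\lambda h_1\neq 0$, a contradiction. So on $\{h_2=h_3=0\}$, and on $\{h_1=h_2=h_3=0\}$, the map $\mathcal F$ is a submersion in the $\f$-variables.

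Given this, parametric transversality applies to the restrictions of $\mathcal F$, and of its components $h_2$ and $h_3$ separately, to each of the two smooth manifolds $\P^{n-1}$ and $\tilde Q_1$, with target the point $0$ in $\C^2$ or $\C$. It yields that for $\f$ outside a proper Zariski closed set:
\begin{itemize}
\item $\{h_2=0\}$ and $\{h_3=0\}$ are smooth;
\item they meet transversally;
\item each meets $\tilde Q_1$ transversally;
\item the triple intersection $\tilde Q_1\cap\{h_2=0\}\cap\{h_3=0\}$ is smooth of codimension $3$.
\end{itemize}
Together these say the three hypersurfaces are smooth and transverse. The only delicate point is the exclusion of $\f(x)\parallel x$, which is exactly what Lemma \ref{lem:tranversality} and the short computation above provide. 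What remains is to intersect finitely many Zariski-open conditions on $\f$.
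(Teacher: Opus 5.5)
Your proposal is correct and follows essentially the paper's route: parametric transversality in the coefficients $f_{j,\alpha}$, using that the $\f$-derivatives of $h_2$ and $h_3$ are independent unless $\f(x)\parallel x$, and Lemma \ref{lem:tranversality} to exclude this on $\tilde Q_1$. Your version is slightly more complete than the paper's, which says the parallel locus is empty ``by Lemma \ref{lem:tranversality}'' even though that lemma only covers $\tilde Q_1$ and such points do exist elsewhere; your remark that $h_3=\lambda h_1\neq 0$ off $\tilde Q_1$ (with $\f$ having no common zero) is what rules them out on $\{h_2=h_3=0\}$, and restricting $(h_2,h_3)$ to $\tilde Q_1$ makes the paper's remark about the gradient of $h_1$ precise.
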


\begin{proof}
We again consider $h_i(x,\f)$ as the functions of $x$ and $f_{i,\alpha}$ as parameters.  
Then 
\begin{align}\label{eq:partialh}
 \frac {\partial h_1} {\partial {f_{j,\alpha}}} =0, \quad \frac {\partial h_2} {\partial {f_{j,\alpha}}} = 2  f_j   x^\alpha , \quad \frac {\partial h_3} {\partial {f_{j,\alpha}}} = 2  x_j   x^\alpha .  
\end{align} 
Therefore the hypersurfaces $h_i=0, i=2,3$, are smooth and they are tangent exactly at the points where $x$ is parallel to $\f$, that is an empty set for generic $\f$ by 
Lemma \ref{lem:tranversality}.  

For $h_1$, $\partial_x h_1 = 2x $ is nonzero as a vector. The gradient of $h_1$ cannot be combination of the gradients of $h_2$ and $h_3$ because for these functions the partial gradients with respect 
to all ${f_{j,\alpha}}$ are independent.  
\end{proof}

Let $p\in \sing(\AUX)$.  Then by Corollary \ref{cor:h_i} we may complete $h_i$ to a system of local coordinates at $p$ and then,  $\AUX$ is given by the equation $h_1 h_2 + \frac {1-d} d h_3^2$.  This means that $\sing(\AUX) $ is nonsingular, $(\smooth (\AUX), \sing(\AUX))$ is a Whitney stratification of $\AUX$ with the ordinary double point as the normal slice singularity.  
This allows us to use the formula of  \cite{PP} to compute $\chi(\AUX)$.  By this formula 
\begin{align}\label{eq:chi(X)}
\chi(\AUX) = \chi^{n-2}_{2d} - \chi (\sing(\AUX) \setminus X^{n-2}_{2d}) 
 = \chi^{n-2}_{2d} - \chi ^{n-4}_{2,d,2d-2} + \chi^{n-5}_{2,d,2d-2, 2d}  . 
\end{align}

Note that $\AUX \cap \tilde Q_1= \{  \sum x_i^2=  \sum x_i f_i(x) = 0 \}$ and therefore 
$\chi (\AUX\cap \tilde Q_1)= \chi^{n-3}_{2,d} $.  
By \eqref{eq:formula1} this gives 
\begin{align}\label{eq:formula2}
 & \qquad   \chi ((Q_2 \setminus Q_1) \cap H_{d-1,1})  = \chi (\P_{\C}^{n-1} \setminus \tilde Q_1) \chi^{n-3}_2 - (-1)^{n-3} \chi( \setminus \tilde Q_1 ) \\ 
 \notag 
= & \chi (\P_{\C}^{n-1} \setminus \tilde Q_1) \chi^{n-3}_2  
- (-1)^{n-3} \big (\chi^{n-2}_{2d} - \chi^{n-4}_{2,d,2d-2} + \chi^{n-5}_{2,d,2d-2, 2d} - \chi (\cap \tilde Q_1) \big )\\ \notag 
 = & (n - \chi^{n-2}_{2}) \chi^{n-3}_{2} + (-1)^{n-2} \big (\chi^{n-2}_{2d} - \chi^{n-4}_{2,d,2d-2} + \chi^{n-5}_{2,d,2d-2, 2d} - \chi^{n-3}_{2,d} \big ).  
\end{align}

\begin{lemma}\label{lem:chiQcupH}
$\chi (\Psi^{-1} (Q\cup H) )= 2n \chi^{n-2}_{2} - (\chi^{n-2}_{2})^2   + (-1)^{n-1} \big (\chi^{n-2}_{2d} - \chi^{n-4}{2,d,2d-2} + \chi^{n-5}_{2,d,2d-2, 2d} - \chi^{n-3}_{2,d} \big )$.  
\end{lemma}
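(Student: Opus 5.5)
We compute $\chi(\Psi^{-1}(\cl Q\cup H))$ on $\P^{n-1}_{\C}\times\P^{n-1}_{\C}$ by inclusion–exclusion. The pullback of the isotropic quadric is $Q_1\cup Q_2$, and $\Psi^{-1}(H)=H_{d-1,1}$, so
\begin{align}
\chi(\Psi^{-1}(\cl Q\cup H)) = \chi(Q_1\cup Q_2) + \chi(H_{d-1,1}) - \chi((Q_1\cup Q_2)\cap H_{d-1,1}).
\end{align}
We split every set into its part over $\tilde Q_1$ (where $\sum x_i^2=0$) and its part over $\P^{n-1}_{\C}\setminus\tilde Q_1$. Each piece is then computed by fibring over the first factor and using multiplicativity of $\chi$ for locally trivial fibrations, corrected along the discriminant loci.

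\emph{The pieces not involving $H$.} Since $Q_1=\tilde Q_1\times\P^{n-1}$, we get $\chi(Q_1)=n\chi^{n-2}_2$. Over $\P^{n-1}\setminus\tilde Q_1$, Lemma \ref{lem:singQ2} says the fibres of $Q_2$ are smooth quadrics, so $\chi(Q_2\setminus Q_1)=(n-\chi^{n-2}_2)\chi^{n-2}_2$. Hence $\chi(Q_1\cup Q_2)=2n\chi^{n-2}_2-(\chi^{n-2}_2)^2$. For $H_{d-1,1}$, the fibre over $x$ is the hyperplane $\{v:\sum f_i(x)v_i=0\}$, which is never degenerate because the $f_i$ have no common zero. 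So $\chi(H_{d-1,1})=n(n-1)$.

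\emph{The pieces involving $H$.} Over $\tilde Q_1$, the set $(Q_1\cup Q_2)\cap H_{d-1,1}$ equals $Q_1\cap H_{d-1,1}$, a $\P^{n-2}$-bundle over $\tilde Q_1$, with Euler characteristic $(n-1)\chi^{n-2}_2$. Over the complement, the contribution is $\chi((Q_2\setminus Q_1)\cap H_{d-1,1})$, which is given by \eqref{eq:formula2}. Combining these, the contributions of $H_{d-1,1}$ and of $Q_1\cap H_{d-1,1}$ collapse: $n(n-1)-(n-1)\chi^{n-2}_2=(n-1)(n-\chi^{n-2}_2)$.

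\emph{Closing the formula.} To match the claimed expression, we need the identity $(n-1)(n-\chi^{n-2}_2)-(n-\chi^{n-2}_2)\chi^{n-3}_2=0$, i.e. $\chi^{n-3}_2=n-1$ whenever $n-\chi^{n-2}_2\neq0$. The Euler characteristic of a smooth quadric is $\chi^{m}_2=m+1$ for $m$ odd and $m+2$ for $m$ even, so this identity holds only for one parity of $n$. For the other parity, $n-\chi^{n-2}_2=0$ and the term vanishes anyway. Thus in either case the bundle contributions cancel.

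What remains is the sign-carrying bracket $(-1)^{n-1}\big(\chi^{n-2}_{2d}-\chi^{n-4}_{2,d,2d-2}+\chi^{n-5}_{2,d,2d-2,2d}-\chi^{n-3}_{2,d}\big)$. It comes from $-\chi((Q_2\setminus Q_1)\cap H_{d-1,1})$, whose correction term in \eqref{eq:formula2} carries $(-1)^{n-2}$, and this gives the stated lemma.

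\emph{Main obstacle.} The delicate points are upstream and are being taken as given:
\begin{itemize}
\item the fibrewise computation \eqref{eq:formula1}, i.e. that the projection over $\P^{n-1}\setminus(\tilde Q_1\cup\AUX)$ is locally trivial and that the singular fibres have a single $A_1$ point;
\item the value of $\chi(\AUX)$ in \eqref{eq:chi(X)}.
\end{itemize}
In this lemma, the step most at risk is the careful bookkeeping of the quadric Euler characteristics by parity of $n$, needed to see the cancellation. I would verify it directly in the cases $n=2$ and $n=3$ before writing the general argument.
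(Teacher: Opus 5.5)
Your proposal is correct and follows the paper's own proof: the same inclusion--exclusion into $\chi(Q_1)$, $\chi(Q_2\setminus Q_1)$, $\chi(H_{d-1,1})$, $\chi(Q_1\cap H_{d-1,1})$ and $\chi((Q_2\setminus Q_1)\cap H_{d-1,1})$, with \eqref{eq:formula2} taken as input, and the same closing identity $(n-\chi^{n-2}_2)(n-1-\chi^{n-3}_2)=0$. Your check of that identity via the parity values of $\chi^m_2$ is a direct verification of what the paper extracts from Corollary \ref{cor:d=2}.
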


\begin{proof}   
 \begin{align}\label{eq:computation}
&\chi (\Psi^{-1} (Q\cup H) )=  \chi (Q_1\cup Q_2) + \chi (H_{d-1,1} ) - \chi ((Q_1\cup Q_2)\cap H_{d-1,1} ) 
 \\ \notag 
& = \chi (Q_1)  +  \chi (Q_2\setminus Q_1) +  \chi (H_{d-1,1} ) - \chi (Q_1\cap H_{d-1,1})
 - \chi ((Q_2\setminus Q_1)\cap H_{d-1,1} )  \\ \notag
& = n \chi^{n-2}_{2} + (n - \chi^{n-2}_{2}) \chi^{n-2}_{2} + n(n-1) - (n-1)\chi^{n-2}_{2} - 
\chi ((Q_2\setminus Q_1)\cap H_{d-1,1} )  \\ \notag
& = (n+1) \chi^{n-2}_{2} - (\chi^{n-2}_{2})^2 + n(n-1) 
\\ \notag 
& \quad - \bigl( (n - \chi^{n-2}_{2}) \chi^{n-3}_{2}  + (-1)^{n-2} \big (\chi^{n-2}_{2d} 
- \chi^{n-4}_{2,d,2d-2} + \chi^{n-5}_{2,d,2d-2, 2d} -   \chi^{n-3}_{2,d} \big ) \bigr ) \\ \notag
& = 2n \chi^{n-2}_{2} - (\chi^{n-2}_{2})^2   + (-1)^{n-1} \big (\chi^{n-2}_{2d} - \chi^{n-4}{2,d,2d-2} + \chi^{n-5}_{2,d,2d-2, 2d} - \chi^{n-3}_{2,d} \big ) . 
\end{align}
For this computation we have used besides \eqref{eq:formula2}:
\begin{enumerate}
\item 
$\chi (Q_1)=  n \chi^{n-2}_{2} $;
\item
$\chi (Q_2\setminus Q_1)=  (n - \chi^{n-2}_{2}) \chi^{n-2}_{2}$ that follows from Lemma \ref {lem:singQ2}; 
\item 
$\chi(H_{d-1,1}) = n(n-1)$  and $\chi(Q_1 \cap H_{d-1,1}) = (n-1)\chi^{n-2}_{2}$;
\item  
$(n - \chi^{n-2}_{2}) (n-1 - \chi^{n-3}_{2}) = 0$ by Corollary \ref{cor:d=2} of Appendix. 
\end{enumerate}
\end{proof}



\section{On forms maximizing the Bombieri-Weyl distance to the discriminant}\label{sec:extremal}

In Section 6, 8 and 9 of \cite{Raffalli2014} Rafalli considers the problem of characterizing forms $F\in \Hdn$ that maximize the distance to the real discriminant $\Disc^\R$ among all real forms in $\Hdn$ with the same Bombieri-Weyl norm as $F$. For the Veronese variety $\Ver$, the analogous problem has been studied in \cite{AKU}.
Thus, for example, \cite[Thm. 6.1]{Raffalli2014} yields that if $F\in \Hdn$ maximizes the distance to $\Disc^\R$ among all real forms in $\Hdn$ with unit Bombieri-Weyl norm, it lies in the span of $T^1_X(F)\in \Tan$, where $X\in \Sn$ ranges over the set of minimizers of the function $\delta_F^1$ (restricted to the $(n-1)$-sphere in the space of real linear forms) defined in \eqref{eq:delta1 value}.
A similar result for $\Ver$ is obtained in \cite{AKU}, where instead of minimizers of $\delta_F^1$ one takes maximizers of $\delta_F^0(X)=F(X^*)^2$, $X\in \Sn$.
Another interesting result from \cite{Raffalli2014} concerns \emph{positive forms}.
A real form is called \emph{positive} if the projective hypersurface in $\P^{n-1}_{\C}$ defined by it has empty real locus.
It follows from \cite[Thm. 8.4]{Raffalli2014} that, for an even $d$, the orthogonally invariant form $P_d=\Vert X\Vert^{d}$ maximizes the distance to $\Disc^\R$ with $\dist(P_d,\Disc^\R)=1$ among all positive $F\in \Hdn$ with the same Bombieri-Weyl norm as $P_d$.

\begin{remark}
From the results of  \cite{Raffalli2014} it is possible to show that $P_d=\Vert X\Vert^d$ is the unique such distance-maximizing positive form.
 Indeed, let $F\in \Hdn$ be a positive form with $\Vert F\Vert=\Vert P_d\Vert$ and write $F=c P_d+Q$ for some constant $c\in \R$ and $Q\in \Hdn$ orthogonal to $P_d$, that is, $\langle Q,P_d\rangle_B=0$.
 Positivity of $H$ and \cite[Lemma 8.1]{Raffalli2014} yield $c>0$. If $Q\neq 0$, we have $c<1$ and by \cite[Cor. 8.2]{Raffalli2014} gives $Q(X^*)<0$ at some point $X^*\in \Sn^\R$. 
Therefore, by \cite[Cor. 5.2]{Raffalli2014} we have \begin{align}
    \dist_B(F, \Disc^\R)\ \leq\ F(X^*)=c\,P_d(X^*) + Q(X^*)=c+Q(X^*)< c <1=\dist_B(P_d,\Disc^\R)
    \end{align}
  \end{remark}

In the case of binary forms ($n=2$) Raffalli posed the following conjecture.
To state it, for an integer $r=0,\dots, d$ with $d-r\in 2\mathbb{N}$, define
{\small\begin{align}\label{eq:Crd}
    C_{r,d}\ =\ (X_1^2+X_2^2)^{\frac{d-r}{2}} \textrm{Re}(X_1+i X_2)^r\ =\ (X_1^2+X_2^2)^{\frac{d-r}{2}} \sum_{k=0}^{\floor{r/2}} (-1)^k {r \choose 2k} X_1^{r-2k} X_2^{2k}
\end{align}}
On the unit circle $\Sn^{\R}$, one has $X_1=\cos t$, $X_2=\sin t$ and $C_{r,d}(\cos t, \sin t)=\cos(r \arccos t)$ coincides with the $r$-th Chebyshev polynomial of the first kind.
Because of that, in \cite{AKU} the form $C_{d,d}$ is called \emph{Chebyshev form}, it has $2d$ equally spaced zeros on $\Sn^{\R}$.

\begin{conjecture}[Conjecture 9.4 in \cite{Raffalli2014}]
Let $d/3<r\leq d$ be such that $d-r$ is even. 
Then $C_{r,d}$ maximizes the distance to the real discriminant $\Disc^{\R}$ among real binary forms $F\in \Hdn$ with $2r$ real zeros in $\Sn$ and same Bombieri-Weyl norm as $C_{r,d}$: 
\[    \dist_B(F,\Disc^{\R})\ \leq\ \frac{\Vert F\Vert_B}{\Vert C_{r,d}\Vert_B}  \]
\end{conjecture}

In the next result we prove this conjecture for $r=d$. 
A real binary form is said to be \emph{real-rooted} if it has only real zeros. 

\begin{theorem}\label{thm:extremal}
    $C_{d,d}$ maximizes the distance to the real discriminant $\Disc^\R$ among real-rooted binary forms in $\Hdn$ having the same Bombieri-Weyl norm as $C_{d,d}$. Moreover, any other maximizer is of the form $C_{d,d}\circ g$ for some orthogonal transformation $g\in O(2)$. 
\end{theorem}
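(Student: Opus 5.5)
The plan is to bound the distance to $\Disc^\R$ from above by $\delta_F^1$ at a well-chosen point, and then to relate norms through an explicit orthogonal-type decomposition over the roots. By Proposition \ref{prop:distdisc}, $\dist_B(F,\Disc^\R)^2=\min_{X\in\Sn^\R}\delta_F^1(X)$. If $F$ is real-rooted, then at each of its $2d$ zeros $X_j$ on the circle ($d$ antipodal pairs) we have $F(X_j^*)=0$. Hence
\[
\delta_F^1(X_j)=\tfrac1d\Vert\nabla_{X_j}^\perp F\Vert^2=\tfrac1d (D_{X_j^\perp}F)^2,
\]
since on a binary form $\nabla^\perp$ is a scalar multiple of $X_j^\perp$. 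Therefore
\[
\dist_B(F,\Disc^\R)^2\le \tfrac1d\min_j (D_{X_j^\perp}F(X_j^*))^2,
\]
and the minimum is at most the average of these quantities over the $d$ roots in $\P^1_\R$. The goal becomes the inequality
\[
\tfrac1{d^2}\sum_{j=1}^d (D_{X_j^\perp}F(X_j^*))^2\ \le\ c_d\Vert F\Vert_B^2,
\qquad c_d=\dist_B(C_{d,d},\Disc^\R)^2/\Vert C_{d,d}\Vert_B^2,
\]
with equality exactly for rotated Chebyshev forms. I would first compute $c_d$ directly: $\Vert C_{d,d}\Vert_B^2=2^{d-1}$, via $\mathrm{Re}(X_1+iX_2)^d=\frac12((X_1+iX_2)^d+(X_1-iX_2)^d)$ and $\langle (X_1+iX_2)^d,(X_1-iX_2)^d\rangle_B=2^d$. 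The derivative at each root is $\pm d$. By symmetry and Proposition \ref{prop:unique_min}-type arguments, the minimum of $\delta^1$ for $C_{d,d}$ is $d$, attained at the roots; this needs a check that $\delta^1_{C_{d,d}}(t)=\cos^2(dt)(1-d)+\frac1d(d^2)$ with $\Vert\nabla\Vert^2=d^2$ constant, giving $d+(1-d)\cos^2(dt)\cdot$(adjusted) — I will verify the constant via \eqref{eq:delta1 value}.

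The key step is a Lagrange-interpolation representation. Write $F=a\prod_{j}\langle X_j^\perp,\cdot\rangle$, up to sign conventions, so that $F=\prod_j L_j$ with $L_j$ the linear form vanishing at $X_j^*$. Then
\[
D_{X_j^\perp}F(X_j^*)=a\prod_{k\neq j}L_k(X_j^*)=a\prod_{k\ne j}\sin(\theta_j-\theta_k).
\]
For $\Vert F\Vert_B^2$ I would use the identity $\Vert F\Vert_B^2=\frac{d+1}{2\pi}\int_0^{2\pi}\!\cdots$. Better is the apolar/reproducing property $\Vert F\Vert_B^2=\frac{1}{?}\int |F|^2$ over $\C^2$ with Gaussian weight. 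Even better, expand $F$ in the basis of the forms $\prod_{k\ne j}L_k$, or equivalently use the Lagrange identity
\[
G=\sum_j G(X_j^*)\,\frac{\prod_{k\ne j}L_k}{\prod_{k\neq j}L_k(X_j^*)}
\]
for degree $d-1$ forms. Pairing $F$ with the test form $G=\sum_j \epsilon_j X_j^{d}$-type objects via $\langle F,Y^d\rangle_B=F(Y^*)$, I aim to show that the vector $v_j=D_{X_j^\perp}F(X_j^*)$ satisfies $\sum v_j^2\le d^2 c_d\Vert F\Vert^2$ through Cauchy–Schwarz:
\[
\sum_j v_j^2=\Big\langle F,\sum_j v_j X_j^{d-1}X_j^\perp\Big\rangle_B\le \Vert F\Vert_B\,\Big\Vert\sum_j v_jX_j^{d-1}X_j^\perp\Big\Vert_B.
\]
Expanding the last norm with $\langle X^{d-1}X^\perp,Y^{d-1}Y^\perp\rangle_B$, which equals an explicit function of $\cos(\theta_j-\theta_k)$ by the pullback formula from the proof of Theorem \ref{thm:EDDtangential}, gives a Gram matrix $M(\theta)$. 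The inequality then reduces to bounding the quadratic form $v^TMv$ in terms of $\sum v_j^2$.

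The main obstacle is this bound. $M$ is not uniformly bounded; its top eigenvalue blows up when roots cluster. However, $v$ is not arbitrary: $v_j$ is small when roots cluster, because of the products of sines. My first attempt would therefore not use Cauchy–Schwarz against a generic $v$. Instead I would use the weighted choice and pass to the complex variable $z=e^{i\theta}$, where $F$ corresponds to a self-inversive polynomial $p(z)$ of degree $2d$ with all roots on the unit circle. Then $\Vert F\Vert_B^2$ is a weighted $\ell^2$ norm of the coefficients of $p$, with weights $\binom{d}{\cdot}^{-1}$ in the paired indices, and $v_j$ is proportional to $|p'(z_j)|$. The needed statement becomes an inequality of Erdős–Turán/Markov flavour: $\min_j|p'(z_j)|$ over unimodular-rooted $p$ with fixed weighted norm is maximized by $z^{2d}-1$ (up to rotation). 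For this I would use the quadrature identity $\frac1{2d}\sum_{z_j}\frac{1}{p'(z_j)}q(z_j)=$ the leading-coefficient functional, for $\deg q<2d$, applied to $q=1$ or $q=z^{d-1}\cdot$(suitable). Together with AM–HM this gives $\min|p'(z_j)|\le$ the harmonic-mean bound, which is tight exactly for equally spaced roots. Equality in AM–HM forces all $|p'(z_j)|$ equal, which characterizes equally spaced roots, i.e.\ $C_{d,d}\circ g$. This yields uniqueness.
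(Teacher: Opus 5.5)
There is a genuine gap, and it sits in your first reduction. You bound $\min_{X\in\Sn^\R}\delta^1_F(X)$ by the values of $\delta^1_F$ at the zeros of $F$. For the extremal form, however, the zeros are exactly where $\delta^1_F$ is \emph{largest}. On the circle $C_{d,d}(\cos t,\sin t)=\cos(dt)$, the tangential derivative is $-d\sin(dt)$, and $\Vert\nabla_X C_{d,d}\Vert^2=d^2$ for unit $X$, so
\[
\delta^1_{C_{d,d}}(\cos t,\sin t)=\cos^2(dt)+d\sin^2(dt)=d+(1-d)\cos^2(dt).
\]
This equals $d$ at the zeros and attains its minimum $1$ at the critical points of $\cos(dt)$. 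Your own formula already shows this, since $1-d<0$, so the assertion that the minimum is $d$ at the roots is wrong.

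Consequently the inequality you reduce to, $\frac1{d^2}\sum_j (D_{X_j^\perp}F(X_j^*))^2\le c_d\Vert F\Vert_B^2$ with $c_d=2^{1-d}$, is false already for $F=C_{d,d}$. The value of $c_d$ comes from $\Vert C_{d,d}\Vert_B^2=2^{d-1}$ and $\dist_B(C_{d,d},\Disc^\R)=1$. For $F=C_{d,d}$ the left side is $d$ and the right side is $1$, and the version with $\min_j$ fails the same way. So no estimate of derivatives at the roots can produce the sharp constant.

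Carrying out your residue/AM--HM idea with $q=z^{2d-1}$ gives $\min_j|D_{X_j^\perp}F(X_j^*)|\le d\rho$, where $\rho$ is the amplitude of the degree-$d$ harmonic part of $F$. That only yields $\dist_B(F,\Disc^\R)^2\le d\,\Vert F\Vert_B^2/2^{d-1}$, weaker by a factor $d$. Its equality case characterizes extremizers of a different functional, so it gives no uniqueness statement for the distance either.

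The missing idea is to test $\delta^1_F$ at the critical points of $F|_{\Sn^\R}$ instead of at its zeros.
\begin{itemize}
\item By Rolle there is one such point $Y_i^*$ on each of the $2d$ arcs between consecutive zeros. The values $F(Y_i^*)$ alternate in sign, and $\nabla^\perp_{Y_i}F=0$, so $\delta^1_F(Y_i)=F(Y_i^*)^2$.
\item Write $F=aC_{d,d}+bS_{d,d}+Q$ in the orthogonal harmonic decomposition. Then $Q$ is divisible by $X_1^2+X_2^2$, so on the circle it is a trigonometric polynomial of degree at most $d-2$, with at most $2d-4$ zeros.
\item Suppose every $|F(Y_i^*)|$ exceeded $\rho=\sqrt{a^2+b^2}=\max_{\Sn^\R}|aC_{d,d}+bS_{d,d}|$. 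Then $Q$ would have the sign of $F$ at these $2d$ points, hence would alternate in sign there, which is impossible.
\item Therefore $\dist_B(F,\Disc^\R)^2\le\rho^2\le\Vert F\Vert_B^2/2^{d-1}$, using $\Vert aC_{d,d}+bS_{d,d}\Vert_B^2=2^{d-1}\rho^2$ and orthogonality.
\item Equality forces $Q=0$, i.e.\ $F=\rho\, C_{d,d}\circ g$ with $g\in O(2)$.
\end{itemize}
This is the paper's argument, and it needs no interpolation basis, Gram matrix, or quadrature identity.
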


\begin{proof}
The proof is based on harmonic (also known as Fischer) decomposition of the space $\Hdn$, which we need only in the case of binary forms.
For this, along with $C_{r,d}$ let us define 
\begin{align}
    S_{r,d}\ =\ (X_1^2+X_2^2)^{\frac{d-r}{2}} \sum_{k=0}^{\floor{(r-1)/2}} (-1)^k {r \choose 2k+1} X_1^{r-2k-1} X_2^{2k+1}.
\end{align}
It is known from \cite[Ch. IV, \S 2]{SteinWeiss1971} that $C_{r,d}$, $S_{r,d}$ with $d-r\in 2\mathbb{N}$ form an orthogonal basis of the space $\Hdn$ of binary forms of degree $d$ (one discards $S_{0,d}=0$ in case of even $d$).
Let us write a real-rooted form $F\in \Hdn$ in this basis, $F=a C_{d,d} + b S_{d,d} + Q$,
where $a C_{d,d} + b S_{d,d}$ is \emph{the harmonic part} of $F$ and $Q$ belongs to the orthogonal complement of $C_{d,d}$ and $S_{d,d}$.
We can assume without loss of generality that $F$ has $d$ distinct pairs of zeros $\pm X^*_1,\dots, \pm X^*_d\in \Sn^\R$, since otherwise $F\in \Disc^\R$.
By Rolle's theorem, the restriction of $F$ to the unit circle $\Sn^\R$ has $d$ pairs of critical points $\pm Y^*_1,\dots, \pm Y^*_d\in \Sn^\R$ and the values of $F$ at these points alternate in sign.
We claim that $\min_{i=1,\dots, d} \vert F(Y_i^*)\vert \leq \rho$, where $\rho:=\sqrt{a^2+b^2}$.
Indeed, if $\vert F(Y_i^*)\vert > \rho$ for all $i=1,\dots, d$, we have that $Q=F-(aC_{d,d}+b S_{d,d})$ has the same sign as $F$ at each point $Y_i^*$, $i=1,\dots, d$, since 
\begin{align}
    \max_{X^*\in \Sn^\R}\vert a \,C_{d,d}(X^*)+ b \,S_{d,d}(X^*)\vert\ =\ \max_{t\in [0,2\pi]} \vert a \cos(d t) + b \sin (d t)\vert\ =\ \sqrt{a^2+b^2}\ =\ \rho.
\end{align}
This implies that values of $Q$ at these $2d$ points alternate in sign and hence $Q$ has $d$ pairs of zeros on $\Sn^\R$.
However, this is impossible as $Q$ is divisible by $(X_1^2+X_2^2)$ and so it can have at most $2d-4$ zeros on $\Sn^\R$.

Now, by \cite[Cor. 5.2]{Raffalli2014} we have for the distance to the real discriminant from $F$:
\begin{align}
    \dist_B(F,\Disc^\R)^2\ =\ \min_{X\in \Sn^\R} \left(F(X^*)^2+ \frac{1}{d}\Vert \nabla^{\perp}_X F\Vert^2\right)\ =\ \min_{i=1,\dots, d} F(Y_i^*)^2\ \leq\ \rho^2.
\end{align}
Furthermore, we have $\Vert a\, C_{d,d}+ b\, S_{d,d}\Vert_B^2 = \rho^2 2^{d-1}$ by \cite[Fact 9.1]{Raffalli2014} and the fact that $a\, C_{d,d} + b\, S_{d,d} = \rho\, C_{d,d}\circ g$ for some orthogonal transformation $g\in O(2)$, see \cite[Sec. $1.1$]{AKU}.
It follows then $\Vert F\Vert_B^2=\Vert a\, C_{d,d}+ b\,S_{d,d}\Vert_B^2+\Vert Q\Vert_B^2 \geq \rho^2 2^{d-1}$ and
\begin{align}\label{eq:ineq_distance}
    \frac{\dist_B(F,\Disc^\R)}{\Vert F\Vert_B}\ \leq\ \frac{1}{\sqrt{2^{d-1}}}\ =\ \frac{\dist_B(C_{d,d},\Disc^\R)}{\Vert C_{d,d}\Vert_B},
\end{align}
where the equality follows from Facts 9.1 and 9.2 in \cite{Raffalli2014}.
Summarizing we see that any real-rooted form $F\in \Hdn$ with $\Vert F\Vert_B=\Vert C_{d,d}\Vert$ must be at the distance at most one from the real discriminant.
Also, we have equality in \eqref{eq:ineq_distance} if and only if $Q=0$ in which case $F=a\, C_{d,d}+b\, S_{d,d}$ with $\rho=\sqrt{a^2+b^2}=1$ (that is, $a=\cos t_0$, $b=\sin t_0$ for some $t_0\in [0,2\pi]$) can be written as $F(t,\sqrt{1-t^2})=\cos t_0 \cos(dt)+\sin t_0 \sin(dt)= \cos(dt-t_0)$ on the unit circle $\Sn^\R$ parametrized by $t\in [0,2\pi]$. This yields the second claim.
\end{proof}

\section{Some open questions and further directions}\label{sec::questions}

Recall that for a real algebraic variety $X\subset \C^N$, the Euclidean Distance Degree $\EDD(X)$ gives an upper bound on the number of real critical points of the Euclidean distance to $X^\R$ from a general point $\bm u\in \R^N$.
A priori, it is unknown whether this bound is achieved by some $\bm u$.
This is true for the Veronese variety $\Ver$ (equivalently, the discriminant $\Disc=\Ver^\vee$) and the Bombieri-Weyl product \eqref{eq:Bomb} as shown in \cite{Kozhasov2018}.
This result and \cite[Conjecture 6.5]{ASS2017} motivates the following conjecture for the tangential variety of $\Ver$ (equivalently, the caustic component $\cl C_d=\Tan^\vee$ of the discriminant).

\begin{conjecture}
There exists a generic real $F\in \Hdn$ so that $\dist_F: \Tan^\R\rightarrow \R$ (equivalently, $\dist_F: \cl C_d^\R \rightarrow \R$) has $\EDD(\Tan)=\EDD(\cl C_d)$ (cf. \eqref{eq:final_formula_simpl}) real (smooth) critical points.
\end{conjecture}

In the case of binary forms we have $\EDD(\Tan)=\EDD(\cl C_d)=d$ by \cite[Thm. 5.1]{LEE2016499} and also by Theorem \ref{thm:main}.
The distance function to $\cl C_d$ from the Chebyshev form $C_{d,d}$ defined in \eqref{eq:Crd} has $d$ real smooth critical points, see the proof of \cite[Fact 9.2]{Raffalli2014}. Hence the above conjecture holds for $n=2$. Also, validity of the conjecture for quadratic forms ($d=2$) follows from Eckart-Young theorem \cite{EY1936}. 
An even more interesting (and seemingly more difficult) question inspired by Theorem \ref{thm:bound} is whether the upper bound  on the number of real critical points of the function $\delta_F^1$ given by \eqref{eq:sum} is sharp.

\begin{problem}
 Determine whether there exists a generic real form $F\in \Hdn$ such that all  \eqref{eq:sum} critical points of $\delta_F^1$ are real.
\end{problem}

Computing $\EDD(\Tan)$ (see Theorem \ref{thm:main}) can be seen as the first step towards understanding the Euclidean Distance Degree of the $2$-nd secant variety (or, more generally, of the $r$-th secant, $r\geq 2$) of the Veronese variety.

The latter problem is largely open with the answer known only in some special cases (including the matrix case ($d=2$) due to \cite{EY1936} and for the $r$-th secant of the rational normal curve $\cl V_{2r}$ ($n=2$) of degree $2r$ for $r=1,2,3,4, 5$ due to Sefat Panah \cite{SefatPanah2026}).

\begin{problem}
    Compute the Euclidean Distance Degree of the $r$-th secant of the Veronese variety $\Ver$ with respect to \eqref{eq:Bomb}.
\end{problem}

The count given by this problem would give the measure of algebraic complexity of the low (symmetric border) rank approximation problem for symmetric tensors with respect to the Bombieri-Weyl product. 
Another problem motivated by our Theorem \ref{thm:main} is the computation of the Euclidean Distance Degree of higher-order osculating varieties \eqref{eq:osculating_variety} and higher-order discriminant varieities \eqref{eq:k_discr}.

\begin{problem}
    Compute $\EDD(\Okdn)$ and $\EDD(\Disc^k)$ with respect to the Bombieri-Weyl product \eqref{eq:Bomb}.
\end{problem}

Note that $\EDD(\mathcal{O}_d^0)=\EDD(\Ver)$ is computed in \cite{FS} and \cite{CS2013} (see also Propostion \ref{prop:EDD_Sigma}) and $\EDD(\mathcal{O}_d^1)=\EDD(\Tan)$ is given by Theorem \ref{thm:main}.

As can be seen from Examples \ref{ex:binary cubic} and \ref{ex:cusp_min}, the 
distance to the discriminant from some in $F\in\Hdn$ attains its minimum at a singular point of $\Disc$. 
One can show that for $d>2$ this situation occurs for a positive-dimensional semialgebraic set of $F\in \Hdn$.

\begin{problem}
Give a characterization of the set of $F\in \Hdn$ for which the distance to the real discriminant is attained at a singular point of $\Disc$. 
\end{problem}

On the other hand, if the real hypersurface in $\P^{n-1}_{\R}$ given by a generic real $F\in \Hdn$ is \emph{locally extremal} in the sense defined by Raffalli in \cite{Raffalli2014}, the distance to $\Disc^\R$ is minimized at a smooth point of $\Disc$ (see Proposition \ref{prop:sing_discr}).
In particular, this holds for $F\in \Hdn$ defining real hypersurfaces with maximal topology and for $F\in \Hdn$ that are strictly positive on $\P^{n-1}_{\R}$, see \cite{Raffalli2014}.

\appendix
\section{}

\subsection{Properties of the Bombieri-Weyl (apolar) product}\label{sec:prop apolar}

We recall some properties of the apolar or Bombieri-Weil product.
The first one is the well known reproducing property of \eqref{eq:Bomb} that states that Bombieri product with a power of  a linear form is just the evaluation in the respective point, that is,
\begin{align}\label{eq:eval}
\scp{F, U^d}_B\ &=\ F(U^{*}),\quad U \in \Hun,
\end{align}
for all $F\in \H_{d}$. As a consequence, we have $\Vert U^d\Vert_B=\Vert U^{*}\Vert^d$ for $U \in \R^n$ and Cauchy-Schwartz inequality yields
\begin{align}\label{eq:max}
\quad \Vert F\Vert_\infty\ \leq\ \Vert F\Vert_B\quad \forall F\in \H_{d}.
\end{align}
Furthermore, if $G\in \H_{d-1}$ is a form of degree $d-1$, formula \eqref{eq:Bomb2} yields
\begin{align}\label{eq:eval2}
\scp{F, G\, U}_B\ &
= \frac 1 d \scp{D_{U} F, G},\quad U\in \Hun,
\end{align}
where $D_{U}F = \sum_{i=1}^{n}   U_i  D_i(f)$ is the derivative of  $F$ along $U^*= (U_1, \ldots, U_n)\in \kk^n$.

In particular, taking $G=V^{d-1}$ where $V \in \Hun$, we see that Bombieri product with $U V^{d-1}$ is computed as
\begin{align}\label{eq:eval3}
    \scp{F, U \, V^{d-1}}_B\ =\ \frac{1}{d} (D_{U} F)(V^*),\quad U, V \in \Hun.
\end{align}

\subsection{Taylor expansion of a homogeneous polynomial}\label{sec:taylor}

Here we derive some basic properties of Bombieri product \eqref{eq:Bomb} in relation to the Taylor expansion of $F\in \H_{d}$.
Formula \eqref{eq:eval2} implies that for $U_0, U_1, \ldots, U_{n-1} \in \Hun$, $\alpha_1+ \cdots + \alpha_{n-1}= k$, we have
\begin{align}\label{eq:derivs}
    \scp{F, U_0^{d-k}U_1^{\alpha_1}\cdots U_{n-1}^{\alpha_{n-1}}}_B\ &=\ \frac{(d-k)!}{d!}  D_{U_1}^{\alpha_1}\cdots D_{U_{n-1}}^{\alpha_{n-1}} F(U_0^*).
\end{align}
Recall that for $U\in \Hun$, $k\in \N$ we denote by $\nabla^k_{U} F$
the degree $k$ form associated to all $k^{\textup{th}}$-order derivatives of $f$ at $U$, see \eqref{eq:nabla}.
We verify, by Euler's identity, that
\begin{align}\label{eq:euler}
    D_{U}\nabla_{U}^{k}(f) = (d-k+1) k  \nabla_{U}^{k-1} (f).
\end{align}

We have $D_{V}F(U^*) = \scp{\nabla_{U}(f), V}_{B}$
and, more generally,  for any $U:= U_0, U_1,\ldots, U_{n-1}\in \Hun$ and $\bm \alpha=(\alpha_1,\dots,\alpha_{n-1})$, $\vert\bm \alpha\vert=k$, we have
\begin{align*}
D_{U_1}^{\alpha_1}\cdots D_{U_{n-1}}^{\alpha_{n-1}} F(U^*)\ =\
\scp{\nabla_{U}^{k}F, U_1^{\alpha_1}\cdots U_{n-1}^{\alpha_{n-1}}}_{B}.
\end{align*}

Let $U:=U_0, U_1, ..., U_{n-1}$ be an \emph{orthonormal} basis of $\Hun$. Then $\big(\prod_{i=0}^{n-1} U_i^{\alpha_i}\big)_{\vert\bm\alpha\vert=d}$  is an orthogonal basis of $\H_{d}$ with respect to \eqref{eq:Bomb}.
The Taylor expansion of $F\in \Hdn$ at $U^*= U_0^*\in \kk^n$ gives

\begin{align}
F(t_0 U_0^*+ t_1 U_1^* + \cdots + t_{n-1} U_{n-1}^*)\ & =\ \sum_{|\bm \alpha|= d} \frac{1} {\alpha_1! \cdots \alpha_{n-1}!} \bm t^{\alpha}
    D_{U_1}^{\alpha_1} \cdots D_{U_{n-1}}^{\alpha_{n-1}}F(U^*)
\end{align}
for any $t_0, t_1 \ldots, t_{n-1}\in \kk$.
Since $U= U_0, U_1, \ldots, U_{n-1}$ is an orthogonal basis, for any $\bm p\in \kk^n$, we have $\sum U_i(\bm p) \, U_i^*= \bm p$ and 
\begin{align}
F(\bm p) & = \sum_{|\bm \alpha|=d} \frac{1} {\alpha_1! \cdots \alpha_{n-1}!}  U(\bm p)^{\alpha_1} U_1(\bm p)^{\alpha_1} \cdots U_{n-1}(\bm p)^{\alpha_{n-1}}
    D_{U_1}^{\alpha_1} \cdots D_{U_{n-1}}^{\alpha_{n-1}}F(U^*) 
\end{align}
As $\kk$ is of characteristic $0$, this gives the following expansion of $F$ :
\begin{equation}\label{eq:taylor}
\begin{aligned}
F & = \sum_{|\bm \alpha|=d} \frac{1} {\alpha_1! \cdots \alpha_{n-1}!} U^{\alpha_1} U_1^{\alpha_1}\cdots U_{n-1}^{\alpha_{n-1}}
    D_{U_1}^{\alpha_1} \cdots D_{U_{n-1}}^{\alpha_{n-1}}F(U^*) \\
    &  = U^d \,\nabla_{U}^{\perp,0}(F) + U^{d-1}\,  \nabla_{U}^{\perp,1}(F)  + \frac 1 {2!}  U^{d-2}\, \nabla_{U}^{\perp,2}(F)+ \cdots + \frac 1 {d!} \nabla_{U}^{\perp,d}(F),
\end{aligned}
\end{equation}
where 
\begin{align}
\nabla^{\perp,k}_{U}F = \sum_{\alpha_1 + \cdots + \alpha_{n-1}=k} {k \choose \alpha_1, \ldots, \alpha_{n-1}} U_1^{\alpha_1} \cdots U_{n-1}^{\alpha_{n-1}}
    D_{U_1}^{\alpha_1} \cdots D_{U_{n-1}}^{\alpha_{n-1}}F(U^*) \in \H_{k}.
\end{align}
We have  $\nabla_{U}^{\perp, 0}(F)= F(U)$, $\nabla_{U}^{\perp}F := \nabla_{U}^{\perp, 1}(F) =  U_1 \, D_{U_1} F(U^*) + \cdots + U_{n-1}\, D_{U_{n-1}}F(U^*)  \in \H_{1,n-1}$.
Since $U_1,\ldots, U_{n-1}$ are orthonormal with respect to the Bombieri product \eqref{eq:Bomb}, 
\begin{align}
\norm{\nabla_{U}^{\perp}F}_B= \sqrt{D_{U_1} F(U^*)^2+ \cdots + D_{U_{n-1}}F(U^*)^2}
\end{align}
By Euler identity $\scp{\nabla_{U} F, U} = d F(U^*)$ and we have 
\begin{equation}\label{eq:nabla perp}
\nabla^{\perp}_{U}F = \nabla_{U} F - \frac{1}{\norm{U}^2} \scp{\nabla_{U} F, U}\, U= \nabla_{U} F - \frac{d}{\norm{U}^2} F(U^*)\, U
\end{equation}
is the projection of $\nabla_{U} (F)$ onto $U^{\perp}$.

Similarly, applying \eqref{eq:nabla perp} to $D_UF$, we get 
\begin{align}\label{eq:nabla2 dec}
    \nabla_U^{\perp,2} F & = \sum_{1\le i,j\le n-1} U_i U_j D_{U_i} D_{U_j} F(U^*) \\&= \nabla_U^{\perp,2} F - 2 U_1 \sum_{1\leq j \le n-1} U_j D_{U_j} D_{U_1}F (U^*)  - U_1^2 D_{U_1}^2 F (U^*)\\
    & = \nabla^{2}_U F 
    - 2 U \, \nabla^{\perp}_U (D_U F)
    - U^2 D_U^2F(U^*) \\
 & = \nabla^{2}_U F
-2 U \left(\nabla_{U} (D_U F) - \frac{d-1}{\norm{U}^2} U (D_U F)(U^*)\right)
- U^2 D_U^2F(U^*) \\
&  = \nabla^{2}_U F
- 2 U \nabla_{U} (D_U F) + U^2 \left( \frac{2(d-1)}{\norm{U}^2} (D_U F)(U^*) - D_U^2F(U^*)\right)
\end{align}
We define 
\begin{align*}
T^{k}_{U}(F) & := U^d\, \nabla_{U}^{\perp, 0}(F) + \cdots + \frac 1 {k!} U^{d-k}\,  \nabla_{U}^{\perp,k}(F), \\ 
R^{k+1}_{U}(F) & := F- T^{k}_{U}(F)= \frac 1 {(k+1)!} U^{d-k-1}\, \nabla_{U}^{\perp,k+1} (F) + \cdots + \frac 1 {d!}\nabla_{U}^{\perp,d} (F).
\end{align*}


Since $\big(\prod_{i=1}^{n} U_i^{\alpha_i}\big)_{|\bm\alpha|=d}$ is an orthogonal basis of $\H_{d, n}$,  $\scp{T^{k}_{U}(F), R^{k+1}_{U}(F)}_B=0$ holds. 
Thus, $T^{k}_{U}(F)$ is the projection of $F$ on $\Ok_{d,U}$ and 
is independent of the choice of $U_1, \ldots, U_{n-1}$ provided $U, U_1, \ldots, U_{n-1}$ are orthonormal with respect to the standard pairing of $\kk^n$.
Also, $\nabla_{U}^{\perp, k}(F)$ is independent of the choice of $U_1, \ldots, U_{n-1}$, as $T^{k}_{U}(F)- T^{k-1}_{U}(F)= \frac 1 {k!} U^{d-k}\, \nabla_{U}^{\perp, k}(F)$.

For any $U\in \Hun^{\times}$, the expansion \eqref{eq:taylor} obtained by completing $U$ to an orthonormal basis $(U, U_1, \ldots, U_{n-1})$ is called the 
\emph{Taylor expansion} of $F$ at $U$. 


    
\begin{lemma} \label{lem:norm_Delta_K}
For $0\le k\le d$ we have
\begin{align} 
\norm{T^k_{U}(F)}_B^2 
= \sum_{i=0}^{k} {{d \choose i}}^{-1} \big(i!\big)^{-2}  \norm{\nabla_{U}^{\perp,i}(F)}^2
\end{align}
\end{lemma}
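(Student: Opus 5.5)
The plan is to reduce the statement to the orthogonality of monomials in an orthonormal frame. By definition, $T^k_U(F)=\sum_{i=0}^k \frac{1}{i!}U^{d-i}\nabla_U^{\perp,i}(F)$, where $U$ is completed to an orthonormal basis $U,U_1,\ldots,U_{n-1}$ of $\Hun$ and $\nabla_U^{\perp,i}(F)\in\H_i(U^\perp)$ is a form in $U_1,\dots,U_{n-1}$ only. Since the monomials $\prod U_j^{\alpha_j}$ form an orthogonal basis of $\Hdn$ for the Bombieri-Weyl product, the summands $U^{d-i}\nabla_U^{\perp,i}(F)$ for different $i$ are pairwise orthogonal: their monomials differ in the exponent of $U$. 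This gives
\begin{align}
\norm{T^k_U(F)}_B^2=\sum_{i=0}^k (i!)^{-2}\norm{U^{d-i}\nabla_U^{\perp,i}(F)}_B^2,
\end{align}
and it remains to prove, for $G\in\H_i(U^\perp)$, the identity
\begin{align}
\norm{U^{d-i}G}_B^2={d\choose i}^{-1}\norm{G}_B^2,
\end{align}
where the norm on the right is the Bombieri-Weyl norm in $\H_i$.

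To prove this identity, I will use invariance of the Bombieri-Weyl product under orthogonal changes of variables, as noted in the Remark on the $O(n)$ action, and take the orthonormal basis to be the coordinate basis, with $U=X_0$. For a monomial $\Vector{X}^{\bm\beta}$ in $X_1,\dots,X_{n-1}$ with $|\bm\beta|=i$, the formula \eqref{eq:Bomb} gives the coefficients
\begin{align}
\norm{X_0^{d-i}\Vector{X}^{\bm\beta}}_B^2={d\choose d-i,\bm\beta}^{-1}=\frac{(d-i)!\,\bm\beta!}{d!},
\qquad
\norm{\Vector{X}^{\bm\beta}}_B^2=\frac{\bm\beta!}{i!}.
\end{align}
Their ratio is $\frac{(d-i)!\,i!}{d!}={d\choose i}^{-1}$, independent of $\bm\beta$. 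Distinct monomials are orthogonal in both spaces, so the identity follows by expanding $G$ in monomials.

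Combining the two displays yields the formula of the lemma. The only point needing care is that $\nabla_U^{\perp,i}(F)$ involves only the variables orthogonal to $U$, so that the monomial bookkeeping applies. This holds by construction in \eqref{eq:taylor}, and it does not depend on the choice of completion, as already observed after the definition of $T^k_U$. I do not expect a genuine obstacle; this is a direct computation. As a consistency check, the case $k=1$ should reproduce \eqref{eq:delta1 value}: since ${d\choose 1}^{-1}=1/d$, the formula gives $F(U^*)^2+\frac1d\norm{\nabla^\perp_UF}^2$.
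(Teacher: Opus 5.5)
Your proof is correct and follows the same route as the paper. Orthogonality of the Taylor summands $U^{d-i}\nabla_U^{\perp,i}(F)$ reduces the claim to the scaling identity $\norm{U^{d-i}G}_B^2={d\choose i}^{-1}\norm{G}_B^2$ for $G\in\H_i(U^\perp)$; the only difference is that the paper derives this factor from the apolarity formula using $D_U(\nabla_U^{\perp,i}F)=0$, whereas you read it off the monomial weights after an orthogonal change of coordinates, and both are valid.
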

\begin{proof}    
Since $U_1, \ldots, U_{n-1}$ are orthogonal to $U= U_1$, we have $D_{U} (\nabla_{U}^{\perp,k}(F))=0$ and \eqref{eq:eval} then yields
\begin{align} 
    \norm{U^{d-i} \nabla_{U}^{\perp,i}(F)}_B^{2}\ &=\ 
\scp{U^{d-i} \nabla_{U}^{\perp,i}(F),U^{d-i} \nabla_{U}^{\perp,i}(F)}_{B}\\
& =\ \frac{(d-i)!\, i!}{d!} \norm{\nabla_{U}^{\perp,i}(F)}_B^2\ =\
{d \choose i}^{-1}\norm{\nabla_{U}^{\perp, i}(F)}_B^2.
\end{align}
We deduce that $\norm{T^k_{U}(F)}_B^2$ is given by
\begin{align}
& \sum_{i=0}^{k}  {(i!)^{-2}} \norm{U ^{d-i}\, \nabla^{\perp, i}_{U}(F)}_B^2\ =\ \sum_{i=0}^{k} {d \choose i}^{-1} (i!)^{-2}  \norm{\nabla^{\perp,i}_{U}(F)}_B^2\\
=\ &\sum_{i=0}^{k} {{d \choose i}}^{-1} \big(i!\big)^{-2} 
 \left\Vert\sum_{\alpha_1 + \cdots + \alpha_{n-1}=i} { i \choose \alpha_1, \ldots, \alpha_{n-1}}U_1^{\alpha_1} \cdots U_{n-1}^{\alpha_{n-1}}
    D_{U_1}^{\alpha_1} \cdots D_{U_{n-1}}^{\alpha_{n-1}}F(U^*)\right\Vert_B^2\\
=\ &\sum_{i=0}^{k} {{d \choose i}}^{-1} \big(i!\big)^{-2} 
\sum_{\alpha_1 + \cdots + \alpha_{n-1}=i} { i \choose \alpha_1, \ldots, \alpha_{n-1}} 
    \big(D_{U_1}^{\alpha_1} \cdots D_{U_{n-1}}^{\alpha_{n-1}}F(U^*)\big)^2\\ 
    =\ &\sum_{i=0}^{k} {{d \choose i}}^{-1} \big(i!\big)^{-2} \norm{\nabla^{i,\perp}(F)(U^*)}_B^2,
\end{align}
since $\left(\sqrt{{ i \choose \alpha_1, \ldots, \alpha_{n-1}}}U_1^{\alpha_1} \cdots U_{n-1}^{\alpha_{n-1}}\right)_{\alpha_1 + \cdots + \alpha_{n-1}=i}$ is an orthonormal basis of $\H_{i,n-1}$.
\end{proof}

\begin{lemma}\label{lem:T1}  For $U\in \Sn$,
$$
\norm{T^1_{U}(F)}^2 = (1- {d}) F(U^*)^2 + \frac{1}{d} \norm{\nabla_{U} F}^2
$$
\end{lemma}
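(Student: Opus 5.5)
We prove Lemma \ref{lem:T1} by combining Lemma \ref{lem:norm_Delta_K} for $k=1$ with the orthogonal decomposition \eqref{eq:nabla perp} of the gradient. First, Lemma \ref{lem:norm_Delta_K} with $k=1$ and $U\in\Sn$ gives
\begin{align}
\norm{T^1_U(F)}_B^2 = \nabla_U^{\perp,0}(F)^2 + {d\choose 1}^{-1}\norm{\nabla_U^{\perp}F}_B^2 = F(U^*)^2 + \frac1d \norm{\nabla^\perp_U F}^2 .
\end{align}
On $\Hun$ the Bombieri-Weyl product coincides with the canonical bilinear form, so the norms on linear forms may be taken in either sense.

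Next we use \eqref{eq:nabla perp}. Since $\norm{U}^2=1$, it gives $\nabla_U F = d\,F(U^*)\,U + \nabla^\perp_U F$ with $\nabla^\perp_U F\in U^\perp$. This splitting is orthogonal, so by Pythagoras
\begin{align}
\norm{\nabla_U F}^2 = d^2 F(U^*)^2 + \norm{\nabla^\perp_U F}^2 .
\end{align}
Equivalently, by Euler's identity $\scp{\nabla_U F,U}=d\,F(U^*)$, the squared component of $\nabla_U F$ along $U$ is $d^2F(U^*)^2$.

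Solving for $\norm{\nabla^\perp_U F}^2$ and substituting into the first display gives
\begin{align}
\norm{T^1_U(F)}^2 = F(U^*)^2 + \frac1d\norm{\nabla_U F}^2 - d\,F(U^*)^2 = (1-d)F(U^*)^2 + \frac1d\norm{\nabla_U F}^2 ,
\end{align}
which is the claim.

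The only point needing care is the complex case, where $U\in\Sn$ means $\scp{U,U}=1$ for the bilinear, not Hermitian, form. Pythagoras still holds there, because it uses only bilinearity and $\scp{U,\nabla^\perp_U F}=0$. Here $\norm{\cdot}^2$ denotes $\scp{\cdot,\cdot}$ throughout, so the argument goes through verbatim.
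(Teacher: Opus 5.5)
Your proposal is correct and follows the paper's proof: both apply Lemma \ref{lem:norm_Delta_K} with $k=1$ and then use $\nabla_U F = d\,F(U^*)\,U + \nabla^{\perp}_U F$ together with Euler's identity. The only difference is that the paper expands $\norm{\nabla_U F - d\,F(U^*)\,U}^2$ directly, whereas you invoke Pythagoras for the orthogonal splitting; this amounts to the same computation.
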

\begin{proof}
Since $\scp{\nabla_{U} F, U}= d\, F(U^*)$, $\nabla_{U} F = d \, F(U^*)\, U + \nabla_{U}^{\perp}F$ and  $\norm{U}=1$ on $\Sn$, we deduce from Lemma \ref{lem:norm_Delta_K}, that
\begin{align}
 \norm{T^1_{U}(F)}^2  
&= F(U^*)^2 + \frac{1}{d} \norm{\nabla_{U}^\perp F}^2
     = F(U)^2 + \frac{1}{d} \norm{\nabla_{U}F - d \, F(U^*) \, U }^2 \\
& = F(U)^2 + \frac{1}{d} \big( \norm{\nabla_{U} F}^2 - 2 \, d \, F(U^*)\, (\nabla_{U} F, U) + d^2\,  F(U^*)^2\big)\\
& = F(U)^2 + \frac{1}{d} \big( \norm{\nabla_{U} F}^2 -  {d^2} F(U^*)^2 \big)
= (1- {d}) F(U^*)^2 + \frac{1}{d} \norm{\nabla_{U} F}^2.
\end{align}
\end{proof}

Notice that $T^{k}_{U}(F)=F$ if $F \in \Ok_{d,U}$.
Notice also that $H=R^{k+1}_{U}(F) = F - T^k_{U}(F)$ is singular at order $k$ at $U$, since $T^{k}_{U}(H)=0$ so that $\nabla^{j}_{U} (H)= 0$ for $j\le k$, and $D_{U_1}^{\alpha_1} \cdots D_{U_{n-1}}^{\alpha_{n-1}} H(U^*)=0$ for $\alpha_1+ \cdots + \alpha_{n-1}\le k$ and for any orthogonal basis $(U, U_1, \ldots, U_{n-1})$ of $\kk^n$.
Thus $R^{k+1}_{U}(F)\in \disc^{k}_{d, U}$.

\subsection{Singularity loci of osculating varieties}\label{sec:ok sing}

In this section, we analyse the singularities of the osculating variety $\Okdn$.
For $\frac{d}{2}\le k \le d$, we define
$$
\cl D_{d}^{k}=\{ X_0^{d-k} X_1^{d-k} G,\  X_0 , X_1\in \Hun \text { $X_0, X_1$ coprime} , G \in \H_{2k-d, n} \}.
$$

\begin{proposition}\label{prop:osc dim sing} 
Let $\kk=\C$. The variety $\Okdn$ is of dimension ${k+n-1 \choose k} + n-1$. Its singular locus is $\sing(\Okdn)= \cl O^{k-1}_{d}$ for $k< \frac{d}{2}$ and $\sing(\Okdn)= \cl O^{k-1}_{d} \cup \cl D_{d}^{k}$ for $\frac{d}{2}\le k \le d$.

Moreover, the multiplicity of $\Okdn$ at a smooth point of \AP{$\cl O^{k-1}_{d}\setminus \cl D_{d}^{k}$} is $2^{n-1}$. 
\end{proposition}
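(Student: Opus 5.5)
Write $\Psi_k:(X,W)\mapsto X^{d-k}W$ for the map $\Hun\times\H_k\to\Okdn$; it is the global version of the local parametrisation $\psi_k$ in \eqref{eq:psik}. Its fibres over $\cl O^k_{d,X}$ carry the $\C^\times$-action $(X,W)\mapsto(tX,t^{-(d-k)}W)$.

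\textbf{Dimension.} Generically $F=X_0^{d-k}W_0$ determines $X_0$ up to scalar whenever $W_0\notin X_0\H_{k-1}$. This is the injectivity discussion after \eqref{eq:psik}, which is valid when $d>2k$, or when $d\le 2k$ and $W\notin\cl O^{2k-d}_k$. So $\psi_k$ is generically injective, and therefore
\[
\dim\Okdn=(n-1)+\dim\H_k=\binom{k+n-1}{k}+n-1 .
\]

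\textbf{Smooth locus.} Differentiating $\psi_k$ at $(\bm 0,W_0)$ gives, as in Lemma \ref{lem:tgtOk}, the image $X_0^{d-k-1}W_0\,\vspan{X_1,\dots,X_{n-1}}+X_0^{d-k}\H_k$. Its dimension is full, equal to $\binom{k+n-1}{k}+n-1$, if and only if $X_0^{d-k-1}W_0X_j\notin X_0^{d-k}\H_k$ modulo combinations, i.e. if and only if $W_0$ is not divisible by $X_0$.

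Suppose $W_0$ is not divisible by $X_0$ and $F$ has a unique preimage. Then $\psi_k$ is an immersion there. It is also injective on a neighbourhood, and together with properness of the global map (closed cone) this shows $F$ is smooth.

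\textbf{Singular locus.} Two sources of singularities remain.

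First, $F\in\cl O^{k-1}_d$, i.e. $W_0=X_0W'$. Here the differential drops rank. To show $F$ is genuinely singular, and to get the multiplicity, restrict to a normal slice. Using the $GL(n)$-transitivity on $\cl O^{k-1}_{d}\setminus\cl D^k_d$, I would compute the tangent cone in the local coordinates $(\lb,W)$. Write $W=X_0W'+\sum_{j\ge1}\mu_j\,\partial$-type terms. The map $\lb\mapsto X_{\lb}^{d-k}W$ then expands as
\[
X_0^{d-k}W+(d-k)X_0^{d-k-1}W'X_0\Big(\textstyle\sum_j\l_jX_j\Big)+\binom{d-k}{2}X_0^{d-k-1}W'\Big(\textstyle\sum_j\l_jX_j\Big)^2+\cdots .
\]
Modulo the linear directions already in the image, the leading normal contribution is quadratic in $\lb$. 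This makes the germ, transversally to the stratum, the cone over a Veronese-type image of $\P^{n-2}$ under a quadratic map (the squares $(\sum\l_jX_j)^2$ times $W'$, paired with the linear parts of $W$). Its degree, hence the multiplicity, is $2^{n-1}$. In particular the multiplicity exceeds $1$, so these points are singular.

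Second, for $k\ge d/2$ there are points with two distinct preimages: $F=X_0^{d-k}X_1^{d-k}G$ with $X_0,X_1$ coprime. These are points of $\cl D^k_d$. Two smooth branches pass through $F$, with tangent spaces $X_0^{d-k-1}W_0\Hun+X_0^{d-k}\H_k$ and the analogous one for $X_1$. These two spaces are distinct, since $X_0^{d-k}\H_k\ne X_1^{d-k}\H_k$, so $F$ is singular. Conversely, any non-injectivity forces $W_0\in X_1^{d-k}\H_{2k-d}$, i.e. $F\in\cl D^k_d$, so no other singular points arise.

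\textbf{Main obstacle.} The hardest step is the multiplicity $2^{n-1}$. The tangent-cone computation must handle carefully the quotient by the linear directions $X_0^{d-k}\H_k$. It must also identify the resulting normal cone as the affine cone over $\nu_2(\P^{n-2})$ inside a linear space, or equivalently as the image of $\lb\mapsto\lb\otimes\lb$. I would first try to reduce to the case $k=1$ (where $\sing=\Ver$ and the answer is $2^{n-1}$, consistent with the Euler obstruction $1+(-1)^n$ used earlier), by restricting $W'$ to be a fixed generic form.
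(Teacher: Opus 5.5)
Your outline matches the paper's: parametrise by $\psi_k$; points are smooth where $d\psi_k$ has full rank and $\psi_k$ is injective; singularity along $\cl O^{k-1}_d$ comes from a transversal slice, and along $\cl D^k_d$ from two branches. However, the slice computation along $\cl O^{k-1}_d$ has two genuine problems, and it carries both the singularity claim and the multiplicity.

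First, your displayed expansion is taken at fixed $W$. Its quadratic term $\binom{d-k}{2}X_0^{d-k-1}W'L^2$ (with $L=\sum_j\l_jX_j$) is not the leading normal term. To stay in the slice $F_0+(X_0^{d-k}\H_k)^{\perp}$ you must also move $W=X_0W'+\dot W$, with $\dot W=-(d-k)LW'+O(L^2)$. The cross term $(d-k)X_0^{d-k-1}L\dot W$ is then also quadratic. The correct leading term is $\bigl(\binom{d-k}{2}-(d-k)^2\bigr)X_0^{d-k-1}L^2G_{k-1}$, where $G_{k-1}$ is the component of $W'$ in $\H_{k-1}(X_0^{\perp})$. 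This is the paper's \eqref{eq:develop2A}, with $c=-\binom{d-k+1}{2}\neq 0$, obtained by solving the triangular system \eqref{eq:systemtriangE}. For $d-k=1$ your coefficient is $0$, so the correction is not cosmetic.

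Second, the cone you name does not have degree $2^{n-1}$. The affine cone over $\nu_2(\P^{n-2})$, i.e.\ the image of $\lb\mapsto\lb\otimes\lb$, has degree $2^{n-2}$. The missing factor $2$ is the real content. The leading map $\lb\mapsto\lb\otimes\lb$ is two-to-one onto that cone. The actual slice parametrisation is injective near $F_0$, since $\psi_k$ is injective away from $\overline{\cl D^k_d}$ and the higher-order terms separate $\lb$ from $-\lb$. So the tangent cone is the Veronese cone counted twice. Taking the degree of the reduced cone, as you propose, gives $2^{n-2}$. For $n=2$ it gives $1$ (e.g.\ the cuspidal edge of the tangent developable of the rational normal curve), so your ``multiplicity exceeds $1$'' argument would not even detect the singularity.

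The paper handles the two claims separately. It proves singularity via the complex link: this is the Milnor fibre of an $A_1$ singularity, with Euler characteristic $1+(-1)^{n-2}\neq 1$. It obtains $2^{n-1}$ as the local degree of a generic linear projection composed with the injective parametrisation $(K,\tilde H)\mapsto\Psk(K,H(K)+\tilde H)$ of the full normal slice. By \eqref{eq:develop-diff}, this is the local degree of $n-1$ generic quadrics in $n-1$ variables.

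Two smaller points.
\begin{itemize}
\item $GL(n)$ is not transitive on $\cl O^{k-1}_d\setminus\cl D^k_d$ once $k$ is large. For $n=2$, $k=4$, the root $X_0$ of $X_0^{d-3}C$ together with the three roots of $C\in\H_3$ has a cross-ratio. So the computation must be done at an arbitrary point with $G_{k-1}\neq 0$, as the paper does.
\item For $\cl D^k_d$, the inequality $X_0^{d-k}\H_k\neq X_1^{d-k}\H_k$ does not by itself force the two full branch tangent spaces to differ. The paper uses $d-k\ge 2$: then the $X_1$-branch tangent space lies in $X_1\H_{d-1}$, while the $X_0$-branch one contains $X_0^d$. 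For $d-k=1$ and $n=2$ both tangent spaces equal $\H_d$.
\end{itemize}
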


\begin{proof}
As we show a singular point $F_0 \in \sing(\Okdn)$, with $F_0\neq 0$, is either a \AP{multiple} point of the parametrization $\psi_k$ defined in \eqref{eq:psik}, in the sense that $F_0\in \cl D_{d}^{k}$, or the image of a point where $d \psi_k$ is not of maximal rank. 
For the computation we replace it by an equivalent parameterisation $\Psk$, see \eqref{eq:param-oscul2} below.  Note that  $\Psk$ is injective over a neighborhood of $F_0 = X_0^{d-k} G$ if $F_0  $ is not in the closure of $\cl D_{d}^k$.  Let us first decompose $G = \sum_{j=0}^{k} X_0^{k-j} G_{j}$ and define  
\begin{align}\label{eq:param-oscul2}
& \Psk  : X_0^\perp \times \hat \H_{0} \times  \cdots \times \hat \H_{k} \to 
\cl{O}^k_{d}, \\ 
&\Psk (K, H_0, \dots, H_k) = \sum_{i=0}^{k} (X_0 + K) ^{d-i} (G_i+H_i ),
\end{align}
where $\hat \H_i = \H_i(X_0^{\perp})$.  Then, 
\begin{align}\label{eq:imag_deriv}
\textrm{Im}\, \textrm{d}\Psk (0,0)= \vspan{ X_0^{d-k} H : H\in \H_{k}} 
+ \vspan{X_0^{d-k-1} G_k K: K \in X_0^\perp},
\end{align}
and it equals the tangent space $T_{F_0}\Okdn$ if $F_0 \in \Okdn \setminus (\cl O^{k-1}_{d} \cup \cl D_{d}^{k})$.  

From now on, let us assume that $F_0=X_0^{d-k}G \ne 0$.  Then 
either $\dim(\textrm{Im}\, \textrm{d}\psi_k (X_0,G)) = {k+n -1 \choose k} + n - 1$ if $X_0$ does not divide $G$, or, otherwise,  it is of dimension ${k+n -1 \choose k}$.  In the first case the image of a neighborhood of $(X_0, G_0)$ is a smooth subvariety and in the second one, as we will show, it is singular.  
Clearly the dimension of $\Okdn$ equals the maximal rank of $\textrm{d}\psi_k$ that is ${k+n -1 \choose k} + n - 1$.  

If $X_0$ divides $G$ then $F_0\in \cl O^{k-1}_{d}$.  Therefore, if $F_0 \in \Okdn \setminus (\cl O^{k-1}_{d} \cup \cl D_{d}^{k})$ then $X_0$ does not divide $G$ and $F_0$ is a nonsingular point of $\Okdn$.  Then the space \eqref{eq:imag_deriv} equals the tangent space $T_{F_0} \Okdn $. 

If $F_0$ is a \AP{multiple point of $\psi_k$, then there are $X_0$ and $X_1$ coprime such that }
$$
F_0= \psi_k(X_0, G_0) = \psi_k(X_1, G_1) = X_0^{d-k} G_0 =  X_1^{d-k} G_1 
$$ 
and then  
$F_0= X_0^{d-k} X_1^{d-k} G'$ with $G'\in \H_{2k-d}$. This may only happen if 
$\frac d 2 \le k \le d$, and then $F_0\in \cl D_{d}^k$. If, moreover, $d-k\ge 2$ then $F_0$ is a singular point of $\Okdn$.  Indeed, 
suppose otherwise.  We may assume that the tangent space $T_{F_0} \Okdn $ is the image \eqref{eq:imag_deriv} computed for $X_1$ and therefore all the vectors in  
$T_{F_0} \Okdn $ is divisible by $X_1$.  But then the image \eqref{eq:imag_deriv} computed for $X_0$ is divisible by $X_0$ that is impossible.    
Note also that if $d-k\ge 2$ then $\dim \cl D_{d}^k < \dim \Okdn$.

\smallskip
Now suppose that $F_0$ is a nonsingular point of $\cl O^{k-1}_{d}$ and that \AP{$F_0\in\cl O^{k-1}_{d}\setminus (\cl O^{k-2}_{d}\cup \cl D^{k-1}_d)$} (we may argue by induction on $k$ and then the latter is a consequence of the former). In particular,  $G_k=0$ and $G_{k-1}\ne 0$. The restriction of $\Psk$ to $H_k=0$, that we denote by $\Pskm$, parameterises $\cl O^{k-1}_{d}$ near $F_0$.  
The image of the differential of $\Pskm$ at $(0,0)$ is then  
\begin{align*}
 T_{F_0} \cl O^{k-1}_{d} =  \textrm{Im}\, \textrm{d}\Pskm (0,0) = X_0^{d-k} \vspan{ G_{k-1}K + X_0 H'; K \in X_0^\perp, H'\in \H_{k-1}} . 
\end{align*}  
Because $G_k=0$ the image of the differential of $\Psk$ at $(0,0)$ is the vector space 
\begin{align*}
\hat T  = \textrm{Im}\, \textrm{d}\Psk (0,0) = X_0^{d-k} \H_{k} = \sum_{l=0}^{k} X_0^{d-l} \H_{l}(X_0^{\perp}) .
\end{align*}

Let $N$ be the normal slice to $T_{F_0} \cl O^{k-1}_{d}$ at $F_0$.  
Our goal is now to parameterise $\Okdn \cap N$ in order to describe its singularities.  It is convenient to consider first 
 $N_T= F_0+ \hat T^\perp =  \sum_{l=k+1}^{d} X_0^{d-l} \H_{l}(X_0^{\perp})$, the normal affine complement of $\hat T$ at $F_0$. Clearly $N_T $ is an affine subspace of $N$.

Write for short 
$H = \sum_{j=0}^{k} X_0^{k-j} H_{j}$ and 
 $\Psk (K, H)= \Psk (K, H_0, \dots, H_k)$, where $H_j \in \H_{j}(X_0^{\perp})$ .  
By Taylor formula at $(0,0)$ we have 
\begin{align}\label{eq:TaylorCase2-2}
\Psk (K, H ) - F_0 & =  \sum_{j=0}^{k}  X_0^ {d-j}H_j  + 
\sum_{i=1}^{d-k} \binom{d-k} i X_0^ {d-k-i} H_k K^i 
 \\
& + \sum_{j=0}^{k-1} \sum_{i=1}^{d-j} 
\binom{d-j} i X_0^ {d-j-i}(G_j+ H_j) K^i .  
\end{align}
We  divide $\Psk (K, H ) - F_0 $ into two parts: the first one, denoted by $\Psi_T$, with values in  $\hat T $ and the other one, denoted $\Psi_R$, with values in $\hat T^{\perp}$:  
\begin{align}\label{eq:psiT+psiR}
& \Psk(K, H )  = F_0 + \Psi_T (K, H ) + \Psi_R(K, H )\\
\Psi_T (K, H )&  =  \sum_{j=0}^{k}  X_0^ {d-j}H_j  + 
\sum_{j=0}^{k-1} \sum_{i=1}^{k-j} 
\binom{d-j} i X_0^ {d-j-i}(G_j+ H_j) K^i  \\ 
&  =  \sum_{s=0}^{k}  X_0^ {d-s} \Big ( H_s  + 
\sum_{i+j=s, i\ge 1} 
\binom{d-j} i (G_j+ H_j) K^i \Big )  \\ 
\Psi_R (K, H )&  = \sum_{i=1}^{d-k} \binom{d-k} i X_0^ {d-k-i} H_k K^i  + 
\sum_{j=0}^{k-1} \sum_{i=k-j+1}^{d-j} 
\binom{d-j} i X_0^ {d-j-i}(G_j+ H_j) K^i . 
\end{align}

Recall that $N_T$ is the affine orthogonal complement to $F_0 + \hat T$ at $F_0$.  
Thus $\Psk (K, H ) \in N_T$ if and only if 
$\Psi_T(K, H )=0$, i.e. all the coefficients at $X_0^{d-s} $ of $\Psi_T$ vanish.  This gives a triangular  equations for the $H_i$ in terms of $G_i$ and $K$.  It has a unique solution of the form  
\begin{align}\label{eq:systemtriangE}
& H_{0} = 0, \quad  H_{1} = - d G_{0}K ,  \\ 
& H_{2} = - (d-1) G_{1}K +  c_{2,1}  G_{0}K^{2} ,\\
& \cdots \\
&  H_{s } = -(d-s+1) G_{s-1}K + \sum_{i=1}^{s-1}  (c_{si} G_{s-i-1}K^{i+1}) ,\\
& \cdots \\
&  H_{k} = -(d-k+1) G_{k-1}K   +\sum_{i=1}^{k-1}  (c_{ki}  G_{k-i-1}K^{i+1}) ,
\end{align}
for some constants $c_{si}$. 
Thus we may compute $H$ in terms of $K$ (and $G$ that is considered fixed):
\begin{align}
H(K) & = -(d-k+1) G_{k-1}K - \sum_{i=2}^k (d-k+i) G_{k-i}X_0^{ i-1} K + (K^2),   
\end{align}
where the notation $(K^2)$  means the sum of terms having  $K^2$  as a factor. 
Note that all $H_{s}$ are divisible by $K$.   After \eqref{eq:psiT+psiR} we get 
\begin{align}\label{eq:develop2A}
\quad \Psk (K, H(K)) & - F_0   = \Psi_R(K,H(K))   = c \, G_{k-1} X_0^ {d-k-1} K^2  +(K^3)  , 
\end{align}
where $c= \textstyle{\binom {d-k+1} {2} }-(d-k+1)(d-k) \ne 0$. 

Given an orthonormal basis $U_0 = X_0, U_1, \ldots, U_{n-1}$ and writing $K = \sum_{j=1}^{n-1} t_i U_i $ 
we obtain a parametrisation 
of $\Okdn\cap N_T$ by $t=(t_1, \ldots, t_{n-1})$.
\begin{align*}
\Psk (K, H(K))  - F_0   = \Psi_R(K,H(K))   
& =  c \,  G_{k-1}X_0^ {d-k-1} \bigl  (\sum_{j=1}^{n-1} t_i U_i \bigr  )^2  + \cdots .
\end{align*}

Let us compute the Euler characteristic 
of the complex link $\C lk ((\Okdn \cap N_T ,F_0)$ of $\Okdn\cap N_T $ at $F_0$.  The complex link is the Milnor fibre of the generic linear form  at $F_0$ restricted to $\Okdn \cap N_T$.  Its pull-back to $\Psk^{-1} (\Okdn \cap N_T)$ is a generic linear combination of the products $t_i t_j$, 
$2\le i\le j\le n$, plus higher-order terms in $t$.  It is therefore the Milnor fibre of an $A_1$ singularity.  Its Milnor number is equal to $1$ and its Euler characteristic is $1+ (-1)^{n-2}$.  Therefore, $\Okdn$ is singular at $F_0$.  Otherwise this complex link would be contractible. 

\begin{remark}\label{rem:Eu_obstruction}
In the case of tangential variety $\cl O^{1}_{d}= \Tan$,  $\cl O^{}_{d}=\Ver$, we have $\hat T = T_{F_0}(\Ver)$ and therefore $N=N_T$ is a normal slice to $\Ver$ at $F_0$. Then the Euler characteristic of the complex link  $\C lk ((\Tan \cap N,F_0))$ is equal to the Euler obstruction of $\Tan$ at $F_0$.  This shows that $Eu_{\Tan} (F_0)= 1+ (-1)^{n}$.
\end{remark}

If $F_0 \in \sing (\cl O^{k-1}_{d})$ then $F_0$ is in the Zariski closure of the regular part of  $\cl O^{k-1}_{d}$ that, as we have just shown, is a subset of $\sing(\Okdn)$.  Therefore 
$F_0 \in \sing(\Okdn)$ because the singular locus is closed.

\medskip
 To parameterise $\Okdn \cap N$, where $N$ is the affine orthogonal complement to $F_0 + T_{F_0}(\cl O^{k-1}_{d})$ at $F_0$, we use again the parameterisation  \eqref{eq:psiT+psiR}.  The condition for 
$\Psk (K, H) \in  \Okdn \cap N$ gives the system of equations \eqref{eq:systemtriangE} with the equation for $s=k$ replaced by 
\begin{align}
H_{k} +(d-k+1) G_{k-1}K   -\sum_{i=1}^{k-1}  (c_{ki}  G_{k-i-1}K^{i+1}) \in (G_{k-1} \hat \H_{1, n-1} )^\perp  , 
\end{align}
where $(G_{k-1} \hat \H_{1, n-1} )^\perp$ is the orthogonal complement of $ G_{k-1} \hat \H_{1, n-1}$ in $\hat \H_{k, n-1}$.
Thus the solution of the system admits a parameter $\tilde H \in (G_{k-1} \hat \H_{1, n-1} )^\perp$ and the last equation of \eqref{eq:systemtriangE} is replaced by 
\begin{align}
H_{k} = -(d-k+1) G_{k-1}K   +\sum_{i=1}^{k-1}  (c_{ki}  \hat G_{k-i-1}K^{i+1})  
+ \tilde H = H_k (K) + \tilde H.  
\end{align}
This gives a parameterisation of $\Okdn \cap N$ that extends the parameterisation \eqref{eq:develop2A} of $\Okdn \cap N_T$, 
\begin{align}\label{eq:develop-diff}
&   \Psk (K, H(K) + \tilde H)  = \Psk(K, H(K)) +  (X_0+K) ^{d-k} \tilde H \\ 
 = F_0+  c \, G_{k-1} X_0^ {d-k-1} K^2 
 & +   X_0^{d-k} \tilde H + 
(d-k)   X_0^ {d-k-1} \tilde H K +  \binom {d-k} 2   X_0^ {d-k-1}\tilde H K^2 + (K^3)  .
\end{align}
If we denote  $F_1=   \Psk (K,H(K)+\tilde H )$ then   
\begin{align}\label{eq:distance}
\|F_1-F_0\|\simeq \|\tilde H, K^2\|. 
\end{align}  

\smallskip
\noindent
\textbf
{Computation of the multiplicity.} 
Let $\Phi: \H_{d}\to \C^{n-1}\times \C^M$, where $M = \dim (G_{k-1} \hat \H_{1, n-1} )^\perp$, so that $(n-1)+M$ is the codimension of $\cl O^{k-1}_{d}$ in $\Okdn$, 
be a generic linear map.  Then consider 
$\varphi  : (\H_{d} \times (G_{k-1} \hat \H_{1, n-1} )^\perp,0) \to (\C^{n-1}\times \C^M,0)$ defined by 
\begin{align*}
\varphi (K,\tilde H)= \Phi (\Psk (K, H(K) +\tilde H)  - F_0) . 
\end{align*}
The local degree of this map, that by \eqref{eq:develop-diff} equals $2^{n-1}$, is the multpilicity of 
$\Okdn$ at $F_0$.  

This concludes the proof of the proposition.
\end{proof}
This proposition generalizes \cite[Proposition 3.13]{BarrilliMourrainTaufer2025}, exploiting  different techniques, and computing the multiplicity of $\Okdn$ along $\smooth(\cl O^{k-1}_d)$.

\smallskip 

\begin{proposition}\label{prop:Whitney}
The $k$-th osculating variety $\Okdn$ is Whitney equisingular along the regular part of $\cl O^{k-1}_{d}$. 
\end{proposition}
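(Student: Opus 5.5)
The plan is to exhibit a local product structure of $\Okdn$ along $\smooth(\cl O^{k-1}_{d})$ and to use the homogeneity of the situation under the group action. Let $F_0 = X_0^{d-k}G$ be a point of $\cl O^{k-1}_{d}\setminus(\cl O^{k-2}_{d}\cup \cl D^{k-1}_d)$, with $G=\sum_{j<k}X_0^{k-j}G_j$, $G_k=0$, $G_{k-1}\neq 0$, as in the proof of Proposition \ref{prop:osc dim sing}. I would first reduce to showing that the germ of the pair $(\Okdn, \cl O^{k-1}_{d})$ at $F_0$ is analytically isomorphic to a product
\[
(\Okdn\cap N, F_0)\times (\C^{m},0),\qquad m=\dim \cl O^{k-1}_{d},
\]
where $N$ is the normal slice used there, and where $\cl O^{k-1}_{d}$ corresponds to $\{F_0\}\times\C^m$. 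A locally trivial product along a smooth stratum immediately satisfies Whitney's conditions (a) and (b), since these are invariant under local analytic isomorphisms and hold trivially for $(\{0\}\times\C^m)\subset Z\times \C^m$.

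To build the trivialization, I would use the parametrization $\Psk$ of \eqref{eq:param-oscul2}. The group $G=GL(n)$ acts on $\Hdn$ and preserves both $\Okdn$ and $\cl O^{k-1}_{d}$. The key observation is that $\cl O^{k-1}_{d}$ is swept out by two families of deformations of $F_0$ which also preserve $\Okdn$: moving the linear form $X_0\mapsto X_0+K$ (realized by the group action), and adding $X_0^{d-k+1}H'$ with $H'\in\H_{k-1}$. The second operation is not a group action, but the map
\[
F\mapsto F + (X_0+K)^{d-k+1}H'
\]
preserves $\cl O^{k}_{d,X_0+K}$, and hence preserves $\Okdn$ along each fibre of $\Psk$. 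Concretely, I would define
\[
\Theta(K,H',F_1)=g_K\cdot F_1 + (X_0+K)^{d-k+1}H' ,
\]
where $F_1\in \Okdn\cap N$, and $g_K\in GL(n)$ depends holomorphically on $K\in X_0^\perp$ and sends $X_0$ to $X_0+K$ (for instance $g_K=\mathrm{id}+K\otimes X_0^*$ dualized). Here $H'$ ranges over a complement of the parameters already absorbed, chosen so that $d\Theta$ at $(0,0,F_0)$ maps onto $T_{F_0}\cl O^{k-1}_{d}$; this is possible by the description $T_{F_0}\cl O^{k-1}_{d}=X_0^{d-k}\vspan{G_{k-1}K+X_0H'}$.

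The main obstacle is checking that $\Theta$ maps $\Okdn\cap N$ into $\Okdn$ and is a local biholomorphism of ambient spaces $N\times\C^m\to\Hdn$ that carries $(\Okdn\cap N)\times\C^m$ \emph{onto} $\Okdn$ near $F_0$. The transversality is a linear algebra check: $T_{F_0}N\oplus T_{F_0}\cl O^{k-1}_{d}=\Hdn$. The issue is whether every point of $\Okdn$ near $F_0$ lies in the image. For an element $X^{d-k}W$ near $F_0$, I would first apply $g_K^{-1}$ to bring $X$ to $X_0$, which gives an element of $\cl O^{k}_{d,X_0}$. I would then subtract the unique $X_0^{d-k+1}H'$ landing in $N$, using the triangular system \eqref{eq:systemtriangE} and the fact that $N$ is transverse. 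Since $\Okdn$ is a union of the linear spaces $\cl O^k_{d,X}$, and adding multiples of $X_0^{d-k+1}\H_{k-1}\subset\cl O^k_{d,X_0}$ stays inside them, this works. Injectivity near $F_0$ follows from $F_0\notin \cl D^{k}_d$ and dimension count.

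Alternatively, if surjectivity is delicate, I would fall back on a Thom–Mather style argument. The previous proposition shows that the normal slices $\Okdn\cap N$ at all points of $\smooth(\cl O^{k-1}_{d})$ have the same parametrization \eqref{eq:develop-diff}, with the same leading term $c\,G_{k-1}X_0^{d-k-1}K^2$. I would then use the fact that the orbit of the group generated by $GL(n)$ and the translations $X_0^{d-k+1}\H_{k-1}$ acts transitively on each connected chart of the stratum, which yields topological and then Whitney equisingularity by homogeneity. Whitney conditions then hold generically by Whitney's lemma, and hence everywhere by transitivity.
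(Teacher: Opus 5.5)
There is a genuine gap at the first step. The map $\Theta$ does not send $(\Okdn\cap N)\times\C^m$ into $\Okdn$.

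The translation $F\mapsto F+(X_0+K)^{d-k+1}H'$ preserves the linear space $\cl O^k_{d,X_0+K}$, but not, in general, the other fibres $\cl O^k_{d,X}$. A general point of the slice $\Okdn\cap N$ lies on a fibre $\cl O^k_{d,X_1}$ with $X_1=X_0+K_1$ and $K_1\neq 0$. The slice is swept out precisely by moving the base point; this is where the term $c\,G_{k-1}X_0^{d-k-1}K^2$ in \eqref{eq:develop-diff} comes from.

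Concretely, take $K=0$ and let $F_1=X_1^{d-k}W_1$ be a smooth point of $\Okdn\cap N$ with $K_1\neq 0$. By Lemma \ref{lem:tgtOk}, every vector of $T_{F_1}(\Okdn)$ is divisible by $X_1^{d-k-1}$. The vector $X_0^{d-k+1}H'$ is not divisible by it as soon as $d-k\ge 2$ and $X_1^{d-k-1}\nmid H'$. Hence the line $s\mapsto \Theta(0,sH',F_1)=F_1+sX_0^{d-k+1}H'$ meets $\Okdn$ near $F_1$ only at $s=0$.

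Your sentence ``adding multiples of $X_0^{d-k+1}\H_{k-1}$ stays inside them'' is true only for the one fibre $\cl O^k_{d,X_0}$. The surjectivity argument fails as well. After applying $g_K^{-1}$ you are in $X_0^{d-k}\H_k$, and subtracting $X_0^{d-k+1}H'$ only changes the Taylor levels below $k$. It therefore cannot cancel the $n-1$ components along $X_0^{d-k}G_{k-1}\hat\H_1\subset T_{F_0}\cl O^{k-1}_d$, so in general you do not reach $N$.

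The fallback has the same defect. The translations by $X_0^{d-k+1}\H_{k-1}$ are not symmetries of $\Okdn$, so the group they generate with $GL(n)$ does not act on the pair $(\Okdn,\cl O^{k-1}_d)$. The group $GL(n)$ alone does act, but it is not transitive on $\smooth(\cl O^{k-1}_d)$ in general. Already for $n=2$, $k=3$, $d\ge 5$, the forms $X^{d-2}L_1L_2$ with $L_1\neq L_2$ and those with $L_1=L_2$ lie in different orbits, and for larger $k$ there are continuous moduli. Homogeneity is what makes the case $k=1$ easy; this is how the paper gets Whitney regularity of $(\Tan\setminus\Ver,\Ver)$ in the proof of Theorem \ref{thm:EDDtangential}. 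Having the same leading term $c\,G_{k-1}X_0^{d-k-1}K^2$ in all slices does not by itself give equisingularity.

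The missing idea is a way to control the directions $X_0^{d-k+1}\H_{k-1}$ of $T_{F_0}\cl O^{k-1}_d$ without an ambient symmetry. The paper does this quantitatively, by verifying the Kuo--Verdier condition (w):
\begin{enumerate}
\item It splits $T_{F_0}\cl O^{k-1}_d=X_0^{d-k}G_{k-1}\hat\H_1\oplus X_0^{d-k+1}\H_{k-1}$.
\item It treats the first summand through the smoothly varying tangent spaces of $O(n)$-orbits. This is the part of your group-action idea that survives.
\item For the second summand it exhibits $P_1=(X_0+K_1)^{d-k}(X_0-(d-k)K_1)H_0\in T_{F_1}(\Okdn)$ with $P_1-X_0^{d-k+1}H_0=O(\norm{K_1}^2)$. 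This is $O(\norm{F_1-F_0})$ by the slice estimate $\norm{F_1-F_0}\simeq\norm{(\tilde H,K_1^2)}$.
\end{enumerate}
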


\begin{proof}
We prove Whitney equisingularity of $\Okdn$ along $\cl O^{k-1}_{d}$, by verifying the Kuo-Verdier condition (w) at a smooth point $F_0$ of $\Odn^{k-1}$, (see \cite{Trotman2020} for a recent survey on Stratification Theory).  Thus, by Proposition \ref{prop:osc dim sing} we may write $F_0=X_0^{d-k} G$, with $G = \sum_{j=0}^{k} X_0^{k-j} G_{j}$, $G_j\in \hat H_{j,n-1}$ with $G_k=0$ and $G_{k-1}\ne 0$.

The Kuo-Verdier condition (w) at $F_0$  means the following.  There are constants $c'$ and $\varepsilon >0$ 
such that for every $F_0' \in \Odn^{k-1}$ and every $F_1\in  \smooth(\Odn^{k})$, such that $\norm{F'_0-F_0} < \varepsilon$, $\norm{F_1-F_0} < \varepsilon$, we have  
\begin{align}\label{eq:bound(w)}
\dist(T_{F'_0}(\Odn^{k-1}),T_{F_1}(\Okdn)) \le c' \norm{F_1-F'_0} 
\end{align}
where by $\dist (A,B)$ between two vector spaces of $\kk^n$ we mean the following
\begin{align}
\dist (A,B) = \sup_{v\in A^{\times}}\inf_{w\in B^{\times}} |\sin \theta (v,w)| = 
\sup_{v\in A^{\times}} \sup_{u\in (B^\perp)^{\times}} \frac {\|u,v\|} {\|u\| \|v\|} , 
\end{align}
where $\theta (v,w)$ is the angle between $v$ end $w$.  This distance is zero if and only if $A\subset B$. By Kuo and Verdier, for real analytic sets, the Kuo-Verdier condition (w) implies Whitney's conditions (a) and (b).  For complex analytic ones, by Teissier and Henry\&Merle,  it is equivalent to Whitney's conditions.  See Trotman's survey for exact references. 

To verify \eqref{eq:bound(w)} for $F_1$ it suffices to verify it for $F'_0$ the closest point to it.  Therefore it suffices to show \eqref{eq:bound(w)} only for $F'_0$ from a neighbourhood of $F_0$ and for $F_1\in N_{F'_0}$, where $N_{F'_0}$ is the affine orthogonal complement to $F'_0 + T_{F'_0}(\cl O^{k-1}_{d})$ at $F'_0$, provided the constants $c', \varepsilon $ can be chosen  independently of $F'_0$.  Thus in what follows we show  \eqref{eq:bound(w)} for $F'_0=F_0$ and $F_1\in N=N_{F_0}$, bearing in mind that the constants should be independent of the (local) choice of $F_0$.  

By \eqref{eq:tangentO_d} we may decompose 
\begin{align}
T_{F_0}(\Odn^{k-1}) = &  T_{0,G} \oplus T_{0,H}, \text { where }  \\ 
&  T_{0,G} := X_0^{d-k} G_{k-1}\, \hat \H_{1,n-1}  \text { and }  T_{0,H}:= X_0^{d-k+1} \, \H_{k-1} .
\end{align}

Let $F_1= (X_0+K_0)^{d-k} H \in \smooth(\Okdn)$ 
with $\norm{K_0} \leq \epsilon$, $\norm{H_1} \leq \epsilon$.
For $P_0 = X_0^{d-k+1} H_0  \in T_{0,H} $ with $H_0 \in \H_{k-1}$, 
let $P_1 = (X_0+K_0)^{d-k} (X_0 -(d-k) K_0) H_0 \in T_{F_1}(\Okdn)$.  
Then 
\begin{align}
P_1 & = (X_0+K_0)^{d-k} (X_0 -(d-k) K_0) H_0 \\ 
& = X_0^{d-k-1} (X_0 +(d-k) K_0)
(X_0 -(d-k) K_0) H_0 + (K_0^2)  \\
& = X_0^{d-k+1} H_0 + (K_0^2)  = P_0 +  (K_0^2) .
\end{align}
Therefore, then after \eqref{eq:distance}, if $F_1\in N$  then 
\begin{align}\label{eq:boundTH}
\dist(T_{0, H},T_{F_1}(\Okdn) ) \leq  c \, \norm{F_1-F_0} ,
\end{align}
where the constant $c$ depends on the norms of $H_0$ and $K_0$.  

It remains to show the inequality 
\begin{align}
\dist(T_{0,G} ,T_{F_1}(\Okdn)) \leq  c \, \norm{F_1-F_0} .
\end{align} 
Because of \eqref{eq:tangent_orbit1} this follows from the next lemma. Note that we do not need that $F_1\in N$.

\begin{lemma}
For every $F_0\in \Hdn, F_0\ne 0,$ there is  a neighborhood $U$ of $F_0$ in $\Hdn$ and a smooth map 
\begin{align}
T:U\to G(N,m) ,
\end{align}
where $m=\dim T_{F_0} Orb(F_0)$, such that $T(F_0) = T_{F_0} Orb(F_0) $ and $T(F)\subset T_{F} Orb(F) $ for every $F\in U$. 
\end{lemma}

\begin{proof}
Let $G: O(n) \times \Hdn \to \Hdn$ be the group action and denote by $D_g : TO(n ) \times \Hdn \to T\Hdn$ its derivative with respect to the first factor and for $F\in \Hdn$ we denote 
$D_g (F) : T_{Id} O(n ) \to T_F\Hdn = \Hdn$ its restriction over $F$. 

Let $V\subset T_{Id} O(n )$  be the vector subspace of dimension $m$ such that 
$T_{F_0} Orb(F_0) = D_g (F_0) (V)$.  
Then we may take as $T$ the map  $T(F): = D_g (F) (V)$ and as $U$ the set of $F$ for which $\rank D_g (F)|_V =m$.      
\end{proof}

This end the proof of Proposition \ref{prop:Whitney}.  
\end{proof}



\subsection{Formulas for the Euler characteristic of generic hypersurfaces and complete interesections}\label{sec:hirzebruch}

Let $X^n_{d_1, . . . , d_k} \subset \P^{n+k}$ be a smooth intersection of $k$ hypersurfaces of degrees $d_1, \ldots , d_k$ (so $\dim X^n_{d_1, . . . , d_k} =n$).  Denote 
$$
\chin{n}{d_1, \ldots , d_k} = \chi (X^n_{d_1, . . . , d_k}). 
$$
Then Hirzebruch's formula of \cite[Appendix I, Theorem 22.1.1]{Hirzebruch} gives a generating function for the topological Euler characteristic 
\begin{align}\label{eq:Hirzebruch}
\sum_{n\ge 0} \chin{n}{d_1, \ldots , d_k} z^n = 
\frac {d_1 \cdots  d_k} {(1-z)^2}  \prod_i \frac 1 {1+(d_i-1)z} .  
\end{align}

\begin{proposition}\label{prop:fromHirzebruch}
The following formulas follow from \eqref{eq:Hirzebruch}
\begin{enumerate}
\item 
If $k=1$ we have 
\begin{align}\label{eq:Hirzebruch1}
\chin{n}{d}  = 
\frac {(1-d)^{n+2} - 1} {d} + n+2 .  
\end{align}
\item 
If $k=2$ and $c\ne d$ we have 
\begin{align}\label{eq:Hirzebruch2}
\chin{n}{c,d}  = 
\frac { d^2((1-c)^{n+3} - 1) - c^2 ((1-d)^{n+3} - 1)} {cd(d-c)} + n+3 .  
\end{align}
and 
\begin{align}\label{eq:Hirzebruchdiff}
(-1)^{n} \chi(\P^{n}  \setminus (X^{n-1}_{c} \cup X^{n-1}_{d}))  = 
\frac {(d-1)^{n+1} - (c-1)^{n+1}} {d-c}.  
\end{align}
\end{enumerate}
\end{proposition}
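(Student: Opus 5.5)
The plan is to derive all three formulas directly from the Hirzebruch generating function \eqref{eq:Hirzebruch} by partial fractions, and then obtain \eqref{eq:Hirzebruchdiff} by inclusion–exclusion.

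\textbf{Case $k=1$.} Here the generating function is $\frac{d}{(1-z)^2(1+(d-1)z)}$. I will decompose it as
\[
\frac{A}{1-z}+\frac{B}{(1-z)^2}+\frac{C}{1+(d-1)z}.
\]
Evaluating at $z=1$ gives $B=1$, and evaluating at $z=-1/(d-1)$ gives $C=(d-1)^2/d$. Then $A=-1/d$ follows from the constant term $A+B+C=d$. Extracting the coefficient of $z^n$ yields
\[
-\tfrac1d+(n+1)+\tfrac{(d-1)^2}{d}(1-d)^n ,
\]
which rearranges to $\frac{(1-d)^{n+2}-1}{d}+n+2$.

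\textbf{Case $k=2$, $c\neq d$.} The same method applies to $\frac{cd}{(1-z)^2(1+(c-1)z)(1+(d-1)z)}$, now with two simple poles. The double-pole coefficient is $1$. The simple-pole coefficients are computed by evaluating the remaining factors at $z=-1/(c-1)$ and at $z=-1/(d-1)$. The $1/(1-z)$ coefficient is fixed by the constant term. Reading off coefficients and regrouping gives \eqref{eq:Hirzebruch2}. A cheaper alternative I may use instead is the identity
\[
\frac{1}{(1+(c-1)z)(1+(d-1)z)}=\frac{1}{(d-c)z}\Big(\frac{1}{1+(c-1)z}-\frac{1}{1+(d-1)z}\Big).
\]
This writes $\chi^{n}_{c,d}$ as a combination of coefficients of shifted one-hypersurface series, namely $\frac{1}{d-c}\big(d\,\chi^{n+1}_c - c\,\chi^{n+1}_d\big)$. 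Substituting the $k=1$ formula then gives the result directly.

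\textbf{Formula \eqref{eq:Hirzebruchdiff}.} Generic hypersurfaces intersect transversally, so inclusion–exclusion gives
\[
\chi(\P^n\setminus(X_c\cup X_d)) = (n+1)-\chi^{n-1}_c-\chi^{n-1}_d+\chi^{n-2}_{c,d}.
\]
I will insert the closed forms just obtained and simplify. The affine terms $n+1$, $n+1$, $n+1$ should cancel. What remains are powers $(1-c)^{n+1}$ and $(1-d)^{n+1}$ with rational coefficients in $c,d$, which should combine to
\[
(-1)^n\,\frac{(d-1)^{n+1}-(c-1)^{n+1}}{d-c}.
\]

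\textbf{Main obstacle.} The only real difficulty is this final algebraic simplification, and the constant terms are where a sign slip is most likely. As a sanity check I will verify the result at $n=0$: the complement is a point, so the right-hand side must equal $1$. I will also check $n=1$ with $c=1$: the complement of $1+d$ points in $\P^1$ has Euler characteristic $1-d$, so the formula must give $(-1)(1-d)=d-1$.
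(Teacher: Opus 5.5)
\textbf{The $k=1$ computation contains an arithmetic error, and as written the claimed rearrangement is false.} With $B=1$ and $C=(d-1)^2/d=d-2+\tfrac1d$, the constant-term condition $A+B+C=d$ forces $A=1-\tfrac1d=\tfrac{d-1}{d}$, not $-\tfrac1d$. Indeed, $-\tfrac1d+1+\tfrac{(d-1)^2}{d}=d-1\neq d$. With your value, the coefficient of $z^n$ is $\frac{(1-d)^{n+2}-1}{d}+n+1$, which is off by one from \eqref{eq:Hirzebruch1}. Already at $n=0$ it gives $d-1$ instead of $\chi^0_d=d$, the $d$ points of $X^0_d\subset\P^1$. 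So the sanity check you planned for \eqref{eq:Hirzebruchdiff} should be run here too. With $A=\tfrac{d-1}{d}$ the coefficient becomes $\tfrac{d-1}{d}+(n+1)+\tfrac{(1-d)^{n+2}}{d}$, which is exactly the right-hand side of \eqref{eq:Hirzebruch1}.

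Once this is fixed, the rest of your argument works. Your difference identity gives $\chi^n_{c,d}=\frac{d\,\chi^{n+1}_c-c\,\chi^{n+1}_d}{d-c}$, and substituting \eqref{eq:Hirzebruch1} yields \eqref{eq:Hirzebruch2} exactly. In the inclusion--exclusion step, set $u=(1-c)^{n+1}-1$ and $v=(1-d)^{n+1}-1$. After the $n+1$ terms cancel, what remains is $-\frac uc-\frac vd+\frac{d^2u-c^2v}{cd(d-c)}=\frac{u-v}{d-c}$, so the simplification you feared takes two lines. The closed forms also return $0$ at the negative indices where the sets are empty, so small $n$ needs no separate treatment.

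\textbf{Comparison with the paper.} The paper proceeds in the opposite order. It writes the generating functions of the complements, where the factor $(1-z)^2$ cancels: $\sum_n\chi(\P^n\setminus X^{n-1}_d)z^n=\frac{1}{(1-z)(1+(d-1)z)}$ and $\sum_n\chi(\P^n\setminus(X^{n-1}_c\cup X^{n-1}_d))z^n=\frac{1}{(1+(c-1)z)(1+(d-1)z)}$. Then \eqref{eq:Hirzebruch1} and \eqref{eq:Hirzebruchdiff} fall out as geometric sums, and \eqref{eq:Hirzebruch2} is recovered from \eqref{eq:Hirzebruchdiff} by inclusion--exclusion. That route avoids partial fractions and the constant-term bookkeeping where your slip occurred. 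Yours reaches \eqref{eq:Hirzebruch2} first and gives the relation between $\chi^n_{c,d}$ and $\chi^{n+1}_c,\chi^{n+1}_d$ as a by-product.
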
 

\begin{proof} 
First note that 
\begin{align*}
\sum_{n\ge 0} \chi(\P^n) z^n = \sum_{n\ge 0} (n+1) z^n =  (1+z+ z^2 +\cdots )^2 
=\frac 1 {(1-z)^2 }  .  
\end{align*}
and then, with the convention $\chin{n}{d}=0$ if $n<0$, 
\begin{align*}
 \sum_{n\ge 0} (\P^{n} \setminus X^{n-1}_{d}) z^{n} & =  \sum_{n\ge 0} (n+1) z^n - 
\sum_{n\ge 0} \chin{n-1}{d} z^n  
 = z \frac {d} {(1-z)^2}  \frac 1 {1+(d-1)z} - \frac 1 {(1-z)^2 } \\
& =   \frac {1+(d-1)z -dz} {(1-z)^2 (1+(d-1)z)} =  \frac {1} {(1-z) (1+(d-1)z)}  \\
& =  (1+z+ z^2 +\cdots ) (1+ (1-d)z+ (1-d)^2 z^2 +\cdots ) \\
& = \sum_{n\ge 0}   (1+ (1-d)+ (1-d)^2  +\cdots + (1-d)^n) z^n \\ 
&  = - \sum_{n\ge 0}  \frac {(1-d)^{n+1} - 1} {d} z^n .  
\end{align*}
That gives $\chin{n-1}{d}  = 
\frac {(1-d)^{n+1} - 1} {d} + n+1 $ as claimed.

\begin{align*}
& \sum_{n\ge 0} \chi(\P^n  \setminus (X^{n-1}_{c} \cup X^{n-1}_{d})) z^n = 
\sum_{n\ge 0} (\chi(\P^n) -  \chi(X^{n-1}_{c}) - \chi(X^{n-1}_{d}) + \chi (X^{n-2}_{c,d})) z^n \\ 
& = \frac {1} {(1-z)^2} - \frac {cz}{(1-z)^2(1+(c-1)z)} -\frac {dz}{(1-z)^2(1+(d-1)z)} \\ 
& \qquad  + \frac {cdz^2}{(1-z)^2(1+(c-1)z)(1+(d-1)z)} \\ 
& = \frac {(1+(c-1)z)(1+(d-1)z) - dz (1+(c-1)z)- cz (1+(d-1)z) + cdz^2  } 
{(1-z)^2 (1+(c-1)z)(1+(d-1)z)}\\ 
& = \frac {(1-z)^2 } 
{(1-z)^2 (1+(c-1)z)(1+(d-1)z)}\\ 
& =  (1+(1-c)z+(1-c)^2z^2+ \cdots )(1+(1-d)z+(1-d)^2z^2+ \cdots ).  
\end{align*}
that gives 
\begin{align}
\sum_{n\ge 0}  \chi(\P^{n}  \setminus (X^{n-1}_{c} \cup X^{n-1}_{d}))z^n   = \sum_{n\ge 0} 
\frac {(1-c)^{n+1} - (1-d)^{n+1}} {d-c} z^n  
\end{align}
that implies \eqref{eq:Hirzebruchdiff}.  
We use \eqref{eq:Hirzebruchdiff} to show  \eqref{eq:Hirzebruch2}. 
Because 
\begin{align*}
\chi(\P^n  \setminus (X^{n-1}_{c} \cup X^{n-1}_{d})) = 
(\chi(\P^n) -  \chi(X^{n-1}_{c})) + (\chi(\P^n)  - \chi(X^{n-1}_{d})) 
- (\chi(\P^n) -  \chi (X^{n-2}_{c,d}))  
\end{align*}
we have 
\begin{align*}
&  \chin{n-2}{c,d} - (n+1) = \frac {(1-c)^{n+1} - (1-d)^{n+1}} {d-c} 
+ \frac {(1-d)^{n+1} - 1} {d}
+ \frac {(1-c)^{n+1} - 1} {c} \\ 
& = \frac {1} {(d-c)cd} \Big ( cd ((1-c)^{n+1} - (1-d)^{n+1}) +
(d-c)c ((1-d)^{n+1} - 1) \\
& + (d-c)d ((1-c)^{n+1} - 1) \Big )   \\ 
& = \frac {1} {(d-c)cd} \Big ( -c^2 (1-d)^{n+1} + d^2 (1-c)^{n+1}  
-d^2 + c^2  \Big ).   
\end{align*}
\end{proof}

\begin{corollary}\label{cor:d=2}
For $d=2$ we have
\begin{align}\label{eq:Hirzebruch1_d=2}
- \chi(\P^{n+1} \setminus X^n_2) = 
\chin{n}{2} -(n+2)  = 
\frac {(1-2)^{n+2} - 1} {2} = 
\begin{cases}
0 \qquad \text{for $n$ even }\\
-1 \quad \text{ for $n$ odd }
\end{cases} .  
\end{align}
In particular, $((n+1) - \chin{n}{2})^2 + (-1)^{n+1}((n+1) - \chin{n}{2}) =0$.
\end{corollary}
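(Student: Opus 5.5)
The plan is to specialize the generating function \eqref{eq:Hirzebruch}, or directly its consequence \eqref{eq:Hirzebruch1}, to $d=2$, and then do elementary parity bookkeeping.

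First, put $d=2$ in \eqref{eq:Hirzebruch1}. This gives
\begin{align}
\chin{n}{2}=\frac{(-1)^{n+2}-1}{2}+n+2 .
\end{align}
Since $\chi(\P^{n+1})=n+2$ and $X^n_2\subset\P^{n+1}$ is closed, additivity of the Euler characteristic gives
\begin{align}
\chi(\P^{n+1}\setminus X^n_2)=(n+2)-\chin{n}{2} .
\end{align}
Together these yield the first displayed chain of equalities. The quantity $\frac{(-1)^{n+2}-1}{2}$ is $0$ for $n$ even and $-1$ for $n$ odd. As a sanity check, for $n=0$ the quadric $X^0_2$ is two points in $\P^1$, so $\chin{0}{2}=2$, which matches.

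Next, set $a_n:=(n+1)-\chin{n}{2}$. From the formula above,
\begin{align}
a_n=-\frac{1+(-1)^n}{2},
\end{align}
so $a_n=-1$ for $n$ even and $a_n=0$ for $n$ odd. In particular $a_n\in\{0,-1\}$, hence $a_n(a_n+1)=0$, and the quadratic identity follows by treating the two parities separately.

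The only delicate point is the sign in front of the linear term. With the values just computed, the identity that holds for all $n$ is $a_n^2+(-1)^n a_n=0$:
\begin{itemize}
\item for $n$ even, $a_n=-1$ and $1-1=0$;
\item for $n$ odd, $a_n=0$ and both terms vanish.
\end{itemize}
With $(-1)^{n+1}$ instead, the case $n=0$ gives $1+1=2\neq 0$. So I would establish the identity in the form $a_n(a_n+1)=0$, i.e.\ $a_n^2+(-1)^na_n=0$, and read the statement's exponent as $n$.

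I would also record the consequence actually used in Lemma~\ref{lem:chiQcupH}. Consecutive indices have opposite parity, and $a_m=0$ for every odd $m$. Since exactly one of $n-2$ and $n-3$ is odd, one of $a_{n-2}=n-1-\chin{n-2}{2}$ and $a_{n-3}=n-2-\chin{n-3}{2}$ vanishes. Hence
\begin{align}
(n-\chin{n-2}{2})\,(n-1-\chin{n-3}{2})=(a_{n-2}+1)(a_{n-3}+1)=0 ,
\end{align}
because $a_m+1=0$ for every even $m$ and one of $n-2,n-3$ is even. This is exactly the vanishing required there, and it holds regardless of the sign convention above.
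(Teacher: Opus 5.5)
Your argument is correct and is the paper's own route: the paper gives no separate proof, since the corollary is just \eqref{eq:Hirzebruch1} at $d=2$ together with additivity of $\chi$. You are also right that the final identity as printed fails for every even $n$. The repair that matches how the paper actually uses the corollary is the index shift $\chin{n}{2}\to\chin{n-1}{2}$, i.e.\ $b^2+(-1)^{n+1}b=0$ with $b=(n+1)-\chin{n-1}{2}=\chi(\P^n\setminus X^{n-1}_2)\in\{0,1\}$. After $n\to n-1$ this is exactly the identity $(n-\chin{n-2}{2})^2+(-1)^n(n-\chin{n-2}{2})=0$ invoked at the end of the proof of Theorem~\ref{thm:EDDtangential}. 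Your exponent-change version is also true, though it is not literally that identity, and your derivation of item (4) in Lemma~\ref{lem:chiQcupH} is fine.
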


\bibliographystyle{alpha}
\bibliography{references.bib}

\end{document}